\theoremstyle{plain}
\newtheorem{theorem}{Theorem}[section]
\newtheorem{cor}[theorem]{Corollary}
\newtheorem{prop}[theorem]{Proposition}
\newtheorem{lemma}[theorem]{Lemma}
\newtheorem*{theosin}{Theorem}
\newcounter{proofcount}
\newtheorem*{claim*}{Claim}
\newenvironment{claimproof*}[1][Proof of Claim.] 
{%
	\proof[#1]%
	
}
{%
	\endproof%
}
\theoremstyle{definition}
\newtheorem{remark}[theorem]{Remark}
\newtheorem{fact}[theorem]{Fact}
\newtheorem{definition}[theorem]{Definition}
\newtheorem{example}[theorem]{Example}
\newtheorem*{quest}{Question}
\newcommand{\nc}{\newcommand}
\nc{\Z}{\mathbb{Z}}
\nc{\Q}{\mathbb{Q}}
\nc{\N}{\mathbb{N}}
\nc{\F}{\mathbb{F}}
\nc{\UU}{\mathbb{U}}
\nc{\C}{\mathbb{C}}
\nc{\M}{\mathcal{M}}
\nc{\R}{\mathcal{R}}
\nc\LL{\mathcal L}
\nc\II{\mathcal I}
\nc{\stt}{\operatorname{St}}
\nc{\stab}{\operatorname{Stab}}
\nc{\GO}[1]{G_{#1}^{00}}
\nc{\sbgp}[1]{\langle\xspace {#1}\xspace\rangle}
\nc{\Conn}[1]{\langle\xspace {X}\xspace\rangle^{00}_{#1}}
\nc{\band}[1]{\bar d_{\mathcal{#1}}}
\nc\Def{\operatorname{Def}}
\nc{\dcl}{\operatorname{dcl}}
\nc{\acl}{\operatorname{acl}}
\nc{\nf}[1]{_{\mid {#1}}}
\nc{\restr}[1]{\xspace_{\upharpoonright {#1}}}
\nc\inv{ ^{-1}}
\nc{\tp}{\operatorname{tp}}
\nc\Spec{S^\mathrm{t}}
\nc\HS{S^{\mathrm{h}_M}}
\nc\U{\operatorname{U}}
\nc{\cf}{\text{cf.\,}}
\nc{\eg}{\text{e.g. }}
\nc{\InvR}[1]{S_{#1}(\bar{\R})^{\rm inv}_{R}}
\nc{\CohR}[1]{S_{#1}(\bar{\R})^{\rm fs}_{R}}
\nc{\InvRt}[1]{S^{\rm t}_{#1}(\bar{\R})^{\rm inv}_{R}}
\nc{\Inv}[1]{S_{#1}(\bar{\M})^{\rm inv}_{M}}
\nc{\Invh}[1]{S^{{\rm h}_M}_{#1}(\bar{\M})^{\rm inv}_{M}}
\nc{\Coh}[1]{S_{#1}(\bar{\M})^{\rm fs}_{M}}
\nc\bM{\overline{M}}
\def\Ind#1#2{#1\setbox0=\hbox{$#1x$}\kern\wd0\hbox to
  0pt{\hss$#1\mid$\hss} \lower.9\ht0\hbox to
  0pt{\hss$#1\smile$\hss}\kern\wd0}
\def\Notind#1#2{#1\setbox0=\hbox{$#1x$}\kern\wd0\hbox to
  0pt{\mathchardef\nn="0236\hss$#1\nn$\kern1.4\wd0\hss}\hbox to
  0pt{\hss$#1\mid$\hss}\lower.9\ht0 \hbox to
  0pt{\hss$#1\smile$\hss}\kern\wd0}
\def\indip{\mathop{\ \ \hbox to 0pt{\hss$\mid^{\hbox to
0pt{$\scriptstyle P$\hss}}$\hss}
\lower4pt\hbox to 0pt{\hss$\smile$\hss}\ \ }}
\def\nindip{\mathop{\ \ \hbox to 0pt{\hss$\!\not{\mid}^{\hbox to
0pt{$\scriptstyle\, P$\hss}}$\hss}
\lower4pt\hbox to 0pt{\hss$\smile$\hss}\ \ }}
\title[]{Spectral spaces in o-minimal and other NIP theories}
\address{Departamento de \'Algebra, Geometr\'ia y Topolog\'ia; Facultad de Matem\'aticas;
	Universidad Complutense de Madrid; 28040 Madrid, Spain}
\email{eliasbaro@pdi.ucm.es}
\email{josefer@ucm.es}
\email{dpalacin@ucm.es}
\date{\today}
\author{El\'ias Baro, Jos\'e F. Fernando, and Daniel Palac\'in}
\thanks{All authors are supported by Spanish STRANO PID2021-122752NB-I00 and Grupos UCM 910444. First and second authors were also supported by STRANO MTM2017-82105-P. The third author was also supported by the contract 2020-T1/TIC-20313 from Community of Madrid, as well as by the Deutsche Forschungsgemeinschaft
	(DFG, German Research Foundation) - Project number 2100310301, part of the
	ANR-DFG program GeoMod.
}
\subjclass[2020]{03C45, 03C64, 14P10}
\begin{document}
\maketitle
\begin{abstract} We study some model-theoretic notions in NIP by means of spectral topology. In the o-minimal setting we relate the o-minimal spectrum with other topological spaces such as the real spectrum and the space of infinitesimal types of Peterzil and Starchenko. In particular, we prove for definably compact groups  that the space of closed points is homeomorphic to the space of infinitesimal types. We also prove that with the spectral topology the set of invariant types concentrated in a definably compact set is a normal spectral space whose closed points are the finitely satisfiable types. 	

On the other hand, for arbitrary NIP structures we equip the set of invariant types with a new topology, called the {\em honest topology}. With this topology the set of invariant types is a normal spectral space whose closed points are the finitely satisfiable ones, and the natural retraction from invariant types onto finitely satisfiable types coincides with Simon's $F_M$ retraction.
\end{abstract}

\section{Introduction}

Spectral spaces  (see Definition \ref{def:spectralspace}) constitute a topological framework related with several areas of mathematics, specially with algebraic geometry via the Zariski spectrum of a ring (and afterwards, following the abstract development of Grothen\-dieck). In the late 1970s, the real spectra of a ring was introduced by Coste and Roy in their foundational paper \cite{CR82} as a substitute in real algebraic geometry for the Zariski spectra of a ring in ordinary algebraic geometry. Recall that if $R$ is a real closed field, $V \subset R^n$ is a real algebraic set and $\mathcal{R}(V)$ denotes the ring of regular functions on $V$, then the real spectra $\text{Sper}(\mathcal{R}(V))$ is a spectral space that reflects the semialgebraic properties of $V$. For example, there is a natural injection of $V$ into $\text{Sper}(\mathcal{R}(V))$, which is continuous with respect to the Euclidean topology of $V$. Moreover, we can identify canonically every semialgebraic subset $S$ of $V$ with a constructible subset $\widetilde{S}$ of  $\text{Sper}(\mathcal{R}(V))$ such that $\widetilde{S}\cap V=S$ (see \cite[Prop.\,7.2.2]{BCR98}). The space $\text{Sper}(\mathcal{R}(V))$ is an object whose (standard) topological properties yield information about (semialgebraic) topological properties of $V$. For example, a semialgebraic subset $S$ of $V$ is semialgebraically connected if and only if $\widetilde{S}$ is connected. Or more notably, it is possible to construct a (standard) sheaf theory of rings on $\text{Sper}(\mathcal{R}(V))$  whose sections are the semialgebraic continuous functions. In general, the space $\text{Sper}(\mathcal{R}(V))$ is quasi-compact but it is not Hausdorff. Nonetheless, since it is normal, the subset of its closed points constitutes a quasi-compact Hausdorff subspace (see Fact \ref{fact:max}).

Spectral spaces within model theory were first considered by Pillay \cite{aP88b}, who introduced the so-called o-minimal spectra of definable sets. Before proceeding, we first recall some basic model-theoretic concepts and fix some notation. 
%Model theory is a branch of mathematics that analyse different kinds of structures from the point of view of mathematical logic. A classical research area within model theory is the study of dividing lines (between first order theories) provided by the fact of having (or not) a certain property of combinatorial nature. For example, we say that a first order complete $\LL$-theory $T$ has the \emph{non independence property} (NIP) if there is no $\LL$-formula $\phi(x,y)$ and there is no model $\mathcal{M}$ of $T$ for which there are sequences $\langle a_i :i<\omega\rangle$ and $\langle b_I: I\subset \omega\rangle$ such that 
%$$\mathcal{M}\models \phi(a_i,b_I) \text{ if and only if }i\in I.$$
%NIP theories includes stable theories (as the theory of algebraically closed fields), o-minimal theories (as the theory of real closed fields) or $C$-minimal theories (as the theory of the $p$-adic field).
%Definable sets are the main concern of model theory within a concrete first order structure. 
Given a first order structure  $\mathcal{M}$ in a language $\LL$ and an $\LL$-formula $\phi(x)$, possibly with parameters, we denote by $\phi(M)=\{a\in M^{n}:  \phi(a) \text{ holds}\}$ the definable subset of $M^{n}$ given by $\phi(x)$, where $x$ is an $n$-tuple of variables. We will say that $\phi(x)$ is an $\LL_M$-formula, denoted by $\phi(x)\in\LL_M$, when the parameters are from $M$. We shall not distinguish between a definable set and a formula defining it. Notice that the family  of definable subsets of $M^n$ constitutes a Boolean algebra and thus it has an associated set of ultrafilters. This set is called the set of complete $n$-types over $M$ and it is denoted by $S_n(\mathcal{M})$. One can turn $S_n(\mathcal{M})$ into a topological space by considering the Stone topology, where a base of open sets is given by the sets of the form $[Z]:=\{p\in S_n(\mathcal{M}): Z\in p \}$ for $Z\subset M^n$ definable. Given a  definable set $X\subset M^n$, we denote the set of $n$-types concentrated in $X$ by $S_X(\M):=[X]$, which with the induced Stone topology is a quasi-compact Hausdorff and totally disconnected space that contains a copy of $X$. This compactification is the natural one from a model-theoretic point of view. However, in o-minimal expansions of real closed fields Pillay \cite{aP88b} consider a thicker topology. Recall that an  $\LL$-structure $\mathcal{M}$  expanding a linear order is \emph{o-minimal} if every definable subset of $M$ is a finite union of points and intervals with end points in $M\cup \{\pm \infty\}$. 

The versatility of o-minimality has recently led to spectacular applications in several areas ranging from diophantine geometry,
non-archimedean geometry, number theory and additive combinatorics.  Real algebraic geometry has (partially) served as a guide in the development of o-minimal geometry. Influenced by the real spectra, in \cite{aP88b} the set $S_X(\mathcal{M})$  is equipped with a spectral topology whose basic open sets are the sets $[U]$ with $U\subset X$ open definable. If the o-minimal structure under consideration is an expansion $\mathcal{R}$ of a real closed field then $S_X(\mathcal{R})$ equipped with the spectral topology is  a normal spectral space that we denote by $\Spec_X(\mathcal{R})$ and called it \emph{o-minimal spectrum}. In particular, the set of closed points $\beta_X(\mathcal{R})$ is a quasi-compact and Hausdorff space and there is a natural continuous retraction
$$r: \Spec_X(\R)\rightarrow \beta_X(\R).$$
The space $\Spec_X(\R)$ has been considered by several authors mainly as a tool to  develop sheaf theories in the o-minimal setting \cite{aP88b, EGP06, BF09, EP20}. In \cite{mT99}, Tressl also considers  $\Spec_X(\R)$ and relates it with the real spectra of the ring of continuous definable functions. On the other hand, the main theme of the unpublished paper \cite{aF06} is the relationship between the specialization order and the Rudin-Keisler order. 

The purpose of this paper is two-fold. First, we relate the o-minimal spectrum with some relevant model-theoretic notions. The main results concerning this part are the following. Let $\mathcal{R}$ be an o-minimal structure expanding a real closed field, and let $G$ be a definable group. In \cite{PS17},  to study types modulo infinitesimals, Peterzil and Starchenko introduce a com\-pacti\-fication of $G$ called $S^\mu_G(\mathcal{R})$ which is related with the Samuel compactification of a topological group. Their motivation to consider this compactification is to obtain definable subgroups of $G$ as stabilizers of certains elements of $S^\mu_G(\mathcal{R})$. In the paper we show:

\begin{theosin}[Corollary \ref{C:HomeoBetaMu}] Let $G$ be a definably compact group. Then $\beta_G(\mathcal{R})$ and $S^\mu_G(\mathcal{R})$ are cano\-ni\-cally homeomorphic. 
\end{theosin}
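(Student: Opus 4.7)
The strategy is to identify both $\beta_G(\mathcal{R})$ and $S^\mu_G(\mathcal{R})$ as quotients of the same underlying set $S_G(\bar{\mathcal{R}})$ by a common equivalence relation, and then to match the topologies. On the one hand, $\beta_G(\mathcal{R})$ is the image of the retraction $r\colon \Spec_G(\mathcal{R}) \to \beta_G(\mathcal{R})$, whose fibers are exactly the classes of the relation $\sim_r$ defined by $p \sim_r q$ iff $r(p)=r(q)$. On the other hand, $S^\mu_G(\mathcal{R})$ is by construction the quotient $S_G(\bar{\mathcal{R}})/{\sim_\mu}$, where $p \sim_\mu q$ iff there exist realizations $a \models p$, $b \models q$ in a monster model with $ab^{-1} \in \mu$, for $\mu = \bigcap \{U(\bar{\mathcal{R}}) : U \subset G \text{ an } M\text{-definable open neighborhood of } 1_G\}$. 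The set-theoretic heart of the theorem is therefore the identity $\sim_r\, =\, \sim_\mu$ on $S_G(\bar{\mathcal{R}})$.

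For $\sim_r \Rightarrow \sim_\mu$: if $r(p)=r(q)=c$, then $c \in \overline{\{p\}} \cap \overline{\{q\}}$ forces any realizations $a \models p$, $b \models q$ to belong to every $M$-definable open $U$ in the ultrafilter $c$, placing both inside the ``infinitesimal neighborhood'' of any realization of $c$. Because $G$ is definably compact, this infinitesimal neighborhood is exactly a single $\mu$-coset, so $ab^{-1} \in \mu$. For $\sim_\mu \Rightarrow \sim_r$: given $ab^{-1}\in\mu$ with $a\models p$, $b\models q$, we show that the unique closed-point specialization $c = r(p)$---well-defined by normality of $\Spec_G(\mathcal{R})$---also lies in $\overline{\{q\}}$. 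The crucial observation is that the closedness of $c$ forces every $M$-definable open $U \in c$ to absorb entire $\mu$-cosets of its realizations; translating the $\mu$-coset property from $1_G$ to any realization of $c$ uses continuity of multiplication together with the definable compactness of $G$. This yields $b \in U$ for every $U \in c$, hence $r(q)=c$.

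Once $\sim_r\, =\, \sim_\mu$ is established, there is a canonical set-theoretic bijection $\Psi\colon \beta_G(\mathcal{R}) \to S^\mu_G(\mathcal{R})$, and the topological identification is immediate. Both spaces are compact Hausdorff: $\beta_G(\mathcal{R})$ by the normality of the spectral space $\Spec_G(\mathcal{R})$ together with Fact \ref{fact:max}, and $S^\mu_G(\mathcal{R})$ by construction as a compactification. The basic open sets on either side are generated by definable open subsets of $G$ by the same recipe, so $\Psi$ is continuous, and a continuous bijection between compact Hausdorff spaces is a homeomorphism. The main obstacle I foresee is the direction $\sim_\mu \Rightarrow \sim_r$: one must show that every $M$-definable open $U$ in the ultrafilter of a closed point $c$ of $\Spec_G(\mathcal{R})$ contains the $\mu$-coset of every realization of $c$. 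This is the step where the definable compactness of $G$ and the group structure interact nontrivially, and it is the technical core of the proof.
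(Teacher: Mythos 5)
Your overall architecture coincides with the paper's: Theorem \ref{teo:samuel} shows precisely that the fibres of $r$ and of the projection onto $S^\mu_G(\mathcal R)$ agree (namely $[p]_\mu\cap\beta_G(\mathcal R)=\{r(p)\}$ for $p$ bounded), and the homeomorphism is then obtained, exactly as you say, as a continuous bijection between quasi-compact Hausdorff spaces. The problem is that the mathematical content is left unproved. The step you yourself flag as ``the main obstacle'' --- that for a closed point $c$ every definable open $U\in c$ absorbs the relevant $\mu$-cosets --- is exactly where definable compactness enters, and you give no argument for it. The paper supplies it as Lemma \ref{lema:sep}: if $Z\subset G$ is definably compact and $V\supset Z$ is definable open, then $WZ\subset V$ for some definable open neighbourhood $W$ of the identity; this is proved by a definable-choice/curve-selection argument and is the technical heart of the whole result. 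One then uses that a closed point $c$ contains, inside any open $U\in c$, a closed (hence, $G$ being definably compact, definably compact) definable set $C\in c$, applies the lemma to get $W C\subset U$, and deduces that $\mu\cdot a\subset U(\bar R)$ for every $a$ lying in all definable opens of $c$ --- in particular for $a\models p$ with $r(p)=c$, which is what your argument actually needs (not realizations of $c$ itself). Without this, the proposal restates the theorem rather than proving it.

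Two further points. First, your justification of the direction $\sim_r\Rightarrow\sim_\mu$ rests on the claim that $\bigcap\{U(\bar R)\colon U\in c\ \text{open definable}\}$ is a single $\mu$-coset for $c$ a closed point; this is false. Take $R$ the real closure of $\mathbb R(t)$ with $t$ infinite, $G$ the circle group on $[0,1)$, and $c$ the $1$-type of the gap between $\{t^{-1/n}\}_{n}$ and the positive reals: $c$ is a closed point, both $a$ and $2a$ realize it, yet $2a\ominus a=a>t^{-1}$ is not infinitesimal over $R$. The direction is nevertheless true (one only gets that such elements are $\mu$-close to \emph{some} realization of $c$), and the paper obtains it without any compactness from the continuity of $\pi\colon \Spec_G(\mathcal R)\to S^\mu_G(\mathcal R)$ (Lemma \ref{L:ContSpecMu}): the set $[\mu\cdot p]=\pi^{-1}(\{[p]_\mu\})$ is spectrally closed, hence contains $\mathrm{cl}^{\rm t}(p)\ni r(p)$. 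Second, the continuity of your bijection is not ``immediate from the same recipe'': the basic opens $U^\mu_\varphi$ of $S^\mu_G(\mathcal R)$ are indexed by \emph{arbitrary} formulas $\varphi$, and showing their preimages are spectrally open requires the compactness argument with $\mu\cdot\mu=\mu$ carried out in Lemma \ref{L:ContSpecMu}.
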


On the other hand, the set of invariant types has played a fundamental role in the development of model theory. Invariant types have been intensively studied in o-minimality \cite{aD04,sS08}, in the dp-minimal context \cite{PS14}  and also in the more general framework of NIP \cite{HP11, pS15}. If $\mathcal{R}$ is an o-minimal expansion of a real closed field and $\bar\R$ is a sufficiently saturated elementary extension of $\R$, given an $R$-definable subset $X \subset \bar R^n$, we say that a type $p\in S_X(\bar\R)$ is $R$-invariant if for every automorphism $\sigma$ of $\bar\R$ fixing $R$ pointwise we have that $p=\sigma(p)$, where $\sigma(p):=\{\sigma(Y): Y\in p\}$. We will denote the set of $R$-invariant types as $\InvR{X}$. An important subset of $R$-invariant types is the one of finitely satisfiable ones, which we will denote by $\CohR{X}$ (see \ref{rmk:invcoh}). Recall that a type $p\in S_X(\bar\R)$ is \emph{finitely satisfiable} in $R$ if for every $Y\in p$ we have that $Y\cap R^n\neq \emptyset$.  Both $\InvR{X}$ and $\CohR{X}$ are closed subsets of $S_X(\R)$ with respect to the Stone topology. In particular, both are spectral spaces with the 
topology inherited from $S_X^{\rm t}(\bar\R)$. In Corollary \ref{C:CohBeta0} and \ref{C:CohBeta} we show that $\CohR{X}$ is closed in $S_X^{\rm t}(\bar\R)$ and the inherited topology is in fact the Stone topology. Additionally, we define $S_X(\bar \R)^{\rm bdd}$ as the set of types that contains a set $\{x\in \bar{R}^n:\|x\|<r\}$ for some positive $r\in \bar R$. In this framework we have:

\begin{theosin}[Theorem \ref{T:InvBddCoh} and Corollary \ref{C:DefCompInvBetaCoh}]Let $X\subset \bar R^n$ be a closed definable set. Then
\[
	\InvR{X} \cap \beta_X(\bar \R) \cap S_X(\bar \R)^{\rm bdd}= \CohR{X}.
	\]
In particular, if $X$ is definably compact then $\InvR{X} \cap \beta_X(\bar \R)= \CohR{X}$ and we have a canonical continuous retraction $r|_{\InvRt{X}}:\InvRt{X} \rightarrow \CohR{X}$.	
\end{theosin}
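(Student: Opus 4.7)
The plan is to prove both inclusions, with the easier containment being $\CohR{X}\subseteq\InvR{X}\cap\beta_X(\bar\R)\cap S_X(\bar\R)^{\rm bdd}$. The inclusion into $\InvR{X}$ is the classical observation that automorphisms fixing $R$ preserve realizations in $R^n$, so finite satisfiability implies $R$-invariance. For boundedness, saturation of $\bar\R$ produces $s\in\bar R$ with $s>a$ for every $a\in R$, so that $R^n\subseteq\{x\in\bar R^n:\|x\|<ns\}$ (using $\|a\|\le|a_1|+\cdots+|a_n|$ for $a\in R^n$), whence any $R$-finitely satisfiable type contains this open ball. For $\CohR{X}\subseteq\beta_X(\bar\R)$, I would use the standard characterization that $p\in\Spec_X(\bar\R)$ is closed if and only if every $Y\in p$ contains a closed definable $F\in p$ (equivalent, by a Zorn-type argument, to $p$ having no proper specialization), and verify it for finitely satisfiable $p$ by exploiting the $R$-witness $a\in Y\cap R^n$ together with the o-minimal dimension inequality $\dim(\bar Z\setminus Z)<\dim Z$ for frontiers to descend to a closed subset of $Y$ lying in $p$.

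For the reverse containment, let $p\in\InvR{X}\cap\beta_X(\bar\R)\cap S_X(\bar\R)^{\rm bdd}$ and suppose towards a contradiction that $p\notin\CohR{X}$: some $Y\in p$ avoids $R^n$. By the closed-point characterization and boundedness we may replace $Y$ by a definably compact $F\in p$ still disjoint from $R^n$. Write $F=\phi(\bar R^n,b)$ and set $q=\tp(b/R)$; then invariance gives $\phi(\bar R^n,b')\in p$ for every $b'\models q$, and since $\neg\phi(a,y)\in q$ for each $a\in R^n$, every such $\phi(\bar R^n,b')$ also avoids $R^n$. The strategy is then to use this invariant family of closed $R^n$-avoiding sets in $p$, together with the fact that a closed point is determined by its closed definable sets, to construct a complete type $q'\ne p$ that still extends the closed filter of $p$, thereby producing a proper specialization of $p$ and contradicting maximality. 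The main obstacle is precisely this construction: one must combine the invariance and boundedness with the dimension-reduction afforded by o-minimality and the rigid behaviour of invariant types in NIP to show that any non-finitely-satisfiable invariant bounded type admits room for a genuinely distinct specialization in the spectral topology.

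Finally, in the definably compact case every type in $S_X(\bar\R)$ is bounded, so the equality collapses to $\InvR{X}\cap\beta_X(\bar\R)=\CohR{X}$. The continuous retraction $r|_{\InvRt{X}}:\InvRt{X}\to\CohR{X}$ is obtained by restricting the canonical spectral retraction $r:\Spec_X(\bar\R)\to\beta_X(\bar\R)$; one checks that for invariant $p$ its unique closed specialization $r(p)$ is invariant, since $R$-automorphisms act as homeomorphisms of $\Spec_X(\bar\R)$ and the uniqueness of closed specializations in the normal spectral space forces $\sigma(r(p))=r(p)$, placing $r(p)$ in $\InvR{X}\cap\beta_X(\bar\R)=\CohR{X}$ by the main equality already established.
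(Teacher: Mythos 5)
Your easy inclusion is essentially right, though your justification that $\CohR{X}\subset \beta_X(\bar \R)$ is thinner than it looks: the frontier inequality $\dim(\mathrm{cl}(Z)\setminus Z)<\dim Z$ does not by itself produce a closed definable $F\in p$ inside a given $Y\in p$. What the paper actually uses is a cell-decomposition statement (Proposition \ref{P:ExtDefOpen}): every externally definable subset of $R^n$ is the trace of an \emph{open} definable subset of $\bar R^n$. Applying this to $\neg Y$ gives an open $U\supset \neg Y$ with $U\cap R^n=\neg Y\cap R^n$, and then $F:=\bar R^n\setminus U\subset Y$ is closed with $(Y\setminus F)\cap R^n=\emptyset$, so finite satisfiability forces $F\in p$. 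You would need to supply something of this strength.

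The real problem is the hard inclusion. Your reduction is correct and matches the paper's: using that $p$ is a bounded closed point of $\Spec_X(\bar\R)$ and that $X$ is closed, any $\varphi\in p$ contains a definably compact $F\in p$, and it remains to show $F\cap R^n\neq\emptyset$. But at exactly this point you declare the construction of a ``genuinely distinct specialization'' to be the main obstacle and leave it open. This is not a routine step one can wave at with ``dimension-reduction'' and ``rigid behaviour of invariant types in NIP''; it is the crux of the theorem, and the paper does not construct a specialization at all. Instead it invokes the characterization of forking in o-minimal theories due to Dolich and Peterzil--Pillay (\cite[Thm.\,6.5]{PP07}, cf.\ \cite[Thm.\,3.5]{aD04}): a formula defining a nonempty closed and bounded set which belongs to an $R$-invariant type (equivalently, does not divide over $R$) must be realized in $R$. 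Without this input, or an argument of comparable depth replacing it, the proof does not close; so as written the proposal has a genuine gap. Your final paragraph (boundedness of all types on a definably compact $X$, invariance of $r(p)$ via the homeomorphisms induced by $\sigma\in\mathrm{Aut}_R(\bar\R)$ and uniqueness of the closed specialization) is correct and agrees with the paper.
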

\noindent Thus, the spectral topology identifies finitely satisfiable types inside the invariant types. It is worth noticing that the above results are based on the following result that is interesting by itself: \emph{given a definable subset $Z\subset \bar{R}^n$ such that $Z\cap R^n\neq \emptyset$, there is an open definable subset $U\subset \bar{R}^n$ such that $Z\subset U$ and $Z\cap R^n=U\cap R^n$} (\emph{i.e.} external definable subsets of $R^n$ can be defined via open subsets of $\bar{R}^n$). Another key ingredient in the proof of the above result is the characterization of forking in o-minimal structures provided by Dolich \cite{aD04}.

The second objective of the paper is to use the intuition gained  from the above theorem to analyse certain objects in abstract NIP contexts. Recall that a complete first order $\LL$-theory has the \emph{non independence property} (NIP) if there is no $\LL$-formula $\phi(x,y)$ and there is no model $\mathcal{M}$ of the theory for which there are sequences $(a_i)_{i\in\N}$ and $(b_I)_{I\subset \N}$ such that $\phi(a_i,b_I)$ holds if and only if $i\in I$. NIP theories include stable theories (as the theory of algebraically closed fields), o-minimal theories (as the theory of real closed fields) or $P$-minimal theories (as the theory of the $p$-adic field). 

Let $T$ be a complete NIP $\LL$-theory, let $\bar\M$ be a saturated elementary extension of a model $\M$ of $T$ and let $X\subset M^n$ be an $M$-definable subset. In \cite{pS15}, Simon constructs a retraction $$F_M: \Inv{X}\rightarrow \Coh{X},$$
which is continuous with respect to the Stone topology. As the author claims: 
\begin{quote}
	\emph{This map remains rather puzzling. It would be nice to have a better understanding of it, for instance a different construction leading to it.}
\end{quote}
Despite of the theorem above, the retractions $r|_{\InvRt{X}}$ and $F_M$ are completely unrelated (see Example \ref{contraejemplo}).
Nevertheless, since a canonical retraction exist under the presence of a normal spectral topology, it seems natural to ask: 

\emph{Can one endow $\Inv{X}$ with a topology so that it becomes a normal spectral space such that the closed points of this topology are the finitely satisfiable types and the natural retraction from $\Inv{X}$ to the closed points $\Coh{X}$ is exactly $F_M$?}

We give a positive answer to this question by considering a new topology on $\Inv{X}$, that we call the \emph{honest topology}. Recall that an $\bar{M}$-definable subset $Z\subset \bar{M}^n$ is \emph{honest} if for every $M$-definable subset $X\subset M^n$ with $Z\cap M^n \subset X$ we have $Z\subset X(\bar{M})$, where $X(\bar{M})$ denotes the set defined in $\bar\M$ by the formula that defines $X$. Honest sets were introduced in \cite{CS13}, where it is proven assuming NIP that for every $\bar M$-definable subset $Y\subset \bar M^n$, there is an $\bar M$-definable honest subset $Z\subset \bar M^n$ such that $Z\cap M^n=Y\cap M^n$.
% In \cite{CS15} the authors even give a uniform version of the existence of honest sets via a strong combinatorial result of Alon-Kleitman: the $(p,q)$-theorem. In that paper, they also conjecture a model-theoretic definable version of the $(p,q)$-theorem in the NIP setting. 
%(see Problem 29 just before Proposition \ref{honestF_M}). 
%This conjecture has been proved in distal NIP theories \cite{BK18} (which includes o-minimal and weakly o-minimal theories), as well as in \cite{pS15} for NIP theories of small or medium directionality.

The set of invariant types with honest topology is denoted by $\Invh{X}$ and a base of closed sets is given by the sets $[Z]$ for $Z$ an honest set. We show that:

\begin{theosin}[Theorem \ref{thm:normal}] Let $T$ be a complete
	NIP theory. The set $\Invh{X}$ is a normal spectral topological space whose closed points are exactly the finitely satisfiable types, and the natural retraction onto the closed points $$r_M^{\rm h}:\Invh{X} \rightarrow \Coh{X}$$ coincides with $F_M$.	
\end{theosin}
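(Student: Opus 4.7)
The plan is to verify the spectral-space axioms for $\Invh{X}$ in sequence, identify the closed points with $\Coh{X}$, and then match the canonical retraction with Simon's $F_M$. The two main tools are the Chernikov--Simon honest approximation theorem (which produces enough honest sets to control arbitrary invariant types) and the elementary observation that a finitely satisfiable type is completely determined by its trace on $M^n$.

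First I would check that $\{[Z] : Z\text{ honest}\}$ is a base of closed sets. Honest sets are closed under finite unions by a direct verification from the definition: if $(Z_1\cup Z_2)\cap M^n\subseteq X$ then $Z_i\cap M^n\subseteq X$, so honesty of each $Z_i$ gives $Z_i\subseteq X(\bar M)$. Since each $[Z]$ is Stone-clopen, the Stone topology refines the honest one, and quasi-compactness is inherited. The specialization order then reads $q\in\overline{\{p\}}$ iff every honest $Z\in p$ satisfies $Z\in q$.

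Second, I would identify the closed points. Suppose $p\in\Coh{X}$ and $q\in\overline{\{p\}}$. The trace-determinacy lemma for finitely satisfiable types (if $Y_1\triangle Y_2$ has empty $M$-trace then $Y_1\triangle Y_2\notin p$, hence $Y_1\in p\iff Y_2\in p$), combined with the Chernikov--Simon theorem, implies that any honest approximation $Z$ of a set $Y\in p$ already lies in $p$, and so in $q$; this pins down the trace of $q$ on all externally definable sets to agree with that of $p$, forcing $q$ itself to be finitely satisfiable and hence equal to $p$. For the converse direction, I would use Simon's $F_M$ as a candidate closed specialization: showing $F_M(p)\in\overline{\{p\}}$ produces a closed point below every invariant $p$, whence the closed points are precisely $\Coh{X}$. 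This inclusion is the main obstacle and boils down to checking that every honest $Z\in p$ lies in $F_M(p)$; I would deduce this from Simon's construction of $F_M$ via honest limits of Morley sequences, combined with the preservation of $M$-traces of honest sets along such limits.

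Finally, normality follows once $\Coh{X}$ is identified, as a Hausdorff subspace of $\Invh{X}$ (homeomorphic to the Stone space of externally definable subsets of $M^n$), by pulling back the Hausdorff separation of disjoint compact sets through the continuous retraction $r_M^{\rm h}$; the equality $r_M^{\rm h}=F_M$ is then immediate from the uniqueness of the closed point in $\overline{\{p\}}$ together with the fact that $F_M(p)$ sits there, thereby settling Simon's question.
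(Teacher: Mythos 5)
Your overall architecture matches the paper's (identify the closed points, show that $F_M(p)$ is a closed specialization of $p$, prove normality, then read off $r_M^{\rm h}=F_M$ from uniqueness of closed specializations), and your argument that finitely satisfiable types are closed is essentially the paper's Lemma \ref{cohclosed}. However, two steps that you treat as routine are in fact where all the work lies, and as written neither is proved.

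First, the claim that every honest $\theta\in p$ belongs to $F_M(p)$ (equivalently $F_M(p)\in\mathrm{cl}^{\rm h}(p)$) is the technical heart of the whole section; in the paper it is Proposition \ref{honestF_M}, proved via the pair $(\mathcal N',\mathcal N)\succeq(\M',\M)$ with the predicate $P$ and the defining property $p|_N(x)\cup\{P(x)\}\vdash\chi(x)$ or $\neg\chi(x)$. The difficulty is that the formula over $N$ forcing $\neg\theta$ on $P$ has parameters outside $M$, so honesty of $\theta$ (which only quantifies over $\LL_M$-formulas) cannot be applied directly; the paper first proves the converse implication to replace that formula by an honest one with the right $M$-trace and then pulls its parameter down into $M$ by elementarity of the pair. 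Your appeal to ``Simon's construction of $F_M$ via honest limits of Morley sequences'' is not a proof of this (and is not how $F_M$ is constructed); without it you have neither the inclusion $\mathrm{cl}^{\rm h}(p)\ni F_M(p)$ nor Corollary \ref{interhonest}, on which the identification of closed points and the normality proof both rest.

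Second, your normality argument is circular. In a spectral space the existence of a \emph{continuous} retraction onto the closed points is a consequence of normality (Fact \ref{fact:max}, \cite{CC83}), not a prior datum; and Simon's continuity of $F_M$ is continuity for the Stone topology, which does not imply that $F_M^{-1}$ of an honest-open subset of $\Coh{X}$ is honest-open in $\Invh{X}$ (the honest topology is strictly coarser). So you cannot ``pull back the Hausdorff separation through $r_M^{\rm h}$'' before normality is established. The paper avoids this by using the strengthened Chernikov--Simon statement (clauses i)--iii) of Fact \ref{F:Honest_exist}): given disjoint basic closed sets $[\psi_1]^{\rm inv}$ and $[\psi_2]^{\rm inv}$, one produces honest $\theta_1,\theta_2$ with $\theta_1(M)=\psi_1(M)=\neg\theta_2(M)$ and no $M$-invariant type containing $\neg\theta_1\wedge\neg\theta_2$, so that $[\theta_1]^{\rm inv}=[\neg\theta_2]^{\rm inv}$ is \emph{clopen} and separates the two sets. (Alternatively, one can prove normality via \cite[Prop.\,2]{CC83} by showing $\mathrm{cl}^{\rm h}(p)\cap\Coh{X}=\{F_M(p)\}$, but that again requires Proposition \ref{honestF_M}.) You need to supply one of these arguments; neither follows from what you have written.
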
	

 We finish this introduction with the organization of the paper. Section \ref{sec:spectral} serves mainly as a preliminary section. In Subsection \ref{section:spectral} we summarize all the basic definitions and results concerning spectral spaces regarding $S^{\rm t}_X(\M)$. As far as possible, we work in a topological first order structure $\M$, which includes for example the theory of the $p$-adic field (and not only the o-minimal setting). All results are well-known (as well as the results of Subsection \ref{subsection:closedtypes}), but in some cases we provide alternative proofs with a model-theoretic flavor instead of using the language of spectral spaces. In  Subsection \ref{sectio:universal} we give a model-theoretic characterization of the closed points $\beta_X(\M)$ of the spectral space $S^{\rm t}_X(\M)$ via a universal property. In Subsection \ref{subsection:closedtypes} we compute a bound of the cardinal of the closure $\textrm{cl}^{\rm t}(p)$ of a type $p\in S^{\rm t}_X(\M)$ in terms of $\dim(p)$.
 
 In Section  \ref{section:relation} we relate the o-minimal spectrum with two natural objects.  Even though we already know that $S^{\rm}_X(\R)$ is not homeomorphic to the real spectrum of the ring of continuos functions, in Subsection \ref{realspectrum} we show that their closed points are naturally homeomorphic. In Subsection \ref{subsec:inftypes} we show the already explained relation between $S_G^{\rm t}(\R)$ and $S^\mu_G(\mathcal{R})$ (the first theorem of this introduction).
 
 In Section \ref{sec:invcoh} we handle the study of invariant and coheir types. In Subsection \ref{subsec:cohclosed} we show that $\CohR{X}$ is closed in $\InvR{X}$ and that the induced spectral topology in $\CohR{X}$ coincides with the Stone one. In Subsection \ref{subsec:inv} we prove that if $X$ is closed then the invariant bounded types that are closed with respect to the spectral topology are exactly the finitely satisfiable types (the second theorem of this introduction).

In Section \ref{sec: honesty} we analyze Simon's retraction $F_M$.  We introduce the honest topology in the set of invariant types and we show that the space $\Inv{X}$ is a normal spectral space whose set of closed points is  $\Coh{X}$ (the last theorem of this introduction).

\section{The spectral topology}\label{sec:spectral}
In this section we equip the space of types of an o-minimal expansion of a real closed field with the so-called {\em spectral topology} as it is done in \cite{aP88b}. We isolate those topological properties of expansions of real closed fields that are needed and work with topological structures with rather general considerations. We recall the notion of topological structure introduced in \cite{aP87}.

\begin{definition}
A first-order structure $\mathcal{M}$ in a language $\mathcal L$ is a {\em topological structure} if there is a formula $\theta(x,\bar y)$ in $\mathcal L$, with $x$ a single variable, such that the collection $\{\theta(M,\bar b)\}_{\bar b\in M^{|y|}}$ forms a basis of a $\mathrm{T}_1$ topology without isolated points.   
\end{definition}

%\begin{remark}
%If $\M$ is a topological structure, then so is every model of its theory.
%\end{remark}

%\begin{definition}
%A topological first-order $\LL$-structure $\M$ is \emph{definably normal} if for any disjoint closed subsets $X_1$ and $X_2$ of $M^n$ there are disjoint open definable neighborhoods $U_1$ and $U_2$ of $X_1$ and $X_2$ respectively. 
%\end{definition}

Let $\mathcal{M}$ denote a topological first-order $\LL$-structure and note that if $X$ is a definable set, then so is the closure $\mathrm{cl}_X(V)$ of $V$ in $X$ for every definable subset $V$ of $X$. 
Assume further that $\mathcal M$ satisfies the following two mild conditions. We will justify in Subsection \ref{sec:spectral} our choice.
\begin{enumerate}
    \item[(A1)] For every $n$, every definable subset of $M^n$ is a Boolean combination of  closed definable subsets of $M^n$.
    
    \item[(A2)] It is {\em completely definably normal}, that is, for any disjoint definable closed subsets $X_1$ and $X_2$ of a definable subset $X\subset M^n$, there are disjoint open definable neighborhoods $U_1$ and $U_2$ in $X$ of $X_1$ and $X_2$ respectively.  
\end{enumerate}

In \cite[Cor.\,4.2]{SW19} the authors prove that certain structures of dp-minimal theories which are topological structures satisfy (A1). These include for example dp-minimal expansions of divisible ordered abelian groups and dp-minimal expansions of valued fields. For example, o-minimal and  weakly o-minimal theories expanding an ordered group, or P-minimal theories expanding a ($p$-adically closed) field.

On the other hand, it is well-known that any o-minimal expansion  $\mathcal{M}$ of a real closed field satisfies (A2). Indeed, note that, given a definable set $Z\subset M^n$ in an o-minimal expansion $\M$ of a real closed field, the distance map $\mathrm{dist}(x,Z)$ is definable, see \cite[Lem.\,6.3.5]{vdD98}. If $\mathcal{M}$ is a valued field, then we can use the valuation in a similar way in order to prove (A2). Hence, if $\mathcal M$ is the $p$-adic field or it is an o-minimal expansion of a real closed field, then $\M$ satisfies both (A1) and (A2).

Regarding weakly o-minimal expansions of ordered groups, to the best of our knowledge it is not known if they satisfy (A2):

\begin{quest} \emph{Is every weakly o-minimal expansion of an ordered group definably normal?}  \end{quest}	

\noindent We point out that if $\mathcal{M}$ is a weakly o-minimal structure expanding an ordered group then two disjoint closed subsets of $M^1$ can be easily separated by definable open sets.

\subsection{Spectral topology}\label{section:spectral} Given a definable set $X\subset M^n$, we denote as usual $S_X(\M)=S_n(\M)\cap [X]$ the set of $n$-types over $M$ which concentrate on $X$. This set of types is naturally equipped with the Stone topology, a quasi-compact Hausdorff totally disconnected topology. However, observe that the topology on $\mathcal M$ does not play any role in the definition of the Stone topology. Henceforth we will identify a definable set with the formula that defines it and therefore an element of $S_X(\M)$ we will be written as an ultrafilter of sets, or as a maximal consistent collection of formulas.

We now consider a topology on the set $S_X(M)$ coarser than the Stone topology, which was introduced in \cite{aP88b}. A basic open subset in this new topology is a set $[\phi]$ where $\phi(\M)$ is an open subset of $X$. The set $S_X(\M)$ endowed with this topology will be denoted by $\Spec_X(\M)$ and the closure of a set $C$ by $\mathrm{cl}^{\rm t}(C)$. We point out that $\Spec_X(\M)$ is a \emph{spectral space}, a notion that we recall next (see \cite{DST19}).

\begin{definition}\label{def:spectralspace}
A {\em spectral space} is a topological space $S$ that satisfies the following conditions:
\begin{enumerate}
    \item[(S1)] It is quasi-compact and $\mathrm{T}_0$.
    \item[(S2)] The set of all quasi-compact open subsets forms a basis of open sets.
    \item[(S3)] The intersection of two quasi-compact open subsets of $S$ is again quasi-compact.  
    \item[(S4)] It is \emph{sober}, that is, every nonempty closed and irreducible subset of $S$ is the closure of a unique point.
\end{enumerate}
The topology on $S$ is called the \emph{spectral topology}.
\end{definition}
We include a short proof for the sake of completeness of the following result.

\begin{fact}\cite[Lem.\,1.1]{aP88b}\label{F:Spectral} The topological space $\Spec_X(\M)$ is spectral.
\end{fact}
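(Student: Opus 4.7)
The plan is to verify the four axioms (S1)--(S4) of Definition \ref{def:spectralspace} in turn, with sobriety being the delicate step. Since the Stone topology on $S_X(\M)$ is quasi-compact and strictly finer than the spectral one, quasi-compactness in (S1) is inherited. For the $\mathrm{T}_0$ axiom I would invoke assumption (A1): it guarantees that every definable subset of $X$ lies in the Boolean algebra generated by the closed definable sets, so two distinct types $p\ne q$ must differ on some closed definable $F\subset X$, and then the open definable set $X\setminus F$ belongs to exactly one of them and separates them in $\Spec_X(\M)$. For (S2) and (S3) I would note that each basic open $[U]$, for $U\subset X$ open definable, is clopen in the Stone topology, hence Stone-compact, and therefore still quasi-compact in the coarser spectral topology; since $[U]\cap[V]=[U\cap V]$ and the intersection of two open definable sets is open definable, the basic opens are closed under finite intersection.

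The main obstacle is sobriety. Fix a nonempty closed irreducible subset $C\subset\Spec_X(\M)$ and consider the two collections
\[
\Sigma=\{F\subset X\ \text{closed definable}:C\subset[F]\},\qquad \Theta=\{U\subset X\ \text{open definable}:[U]\cap C\ne\emptyset\}.
\]
The set $\Sigma$ is the filter of closed definable sets containing $C$, while irreducibility of $C$ yields that $\Theta$ is closed under finite intersections: if $U_1,U_2\in\Theta$ then the nonempty open pieces $[U_1]\cap C$ and $[U_2]\cap C$ of $C$ must meet, so $[U_1\cap U_2]\cap C\ne\emptyset$. Next I would check that $\Sigma\cup\Theta\cup\{x\in X\}$ is finitely consistent: a finite intersection from $\Sigma$ still contains $C$, a finite intersection from $\Theta$ still meets $C$ by the prime property, so every finite conjunction is realized by some type in $C$. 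Extending to a complete type $p\in S_X(\M)$, the inclusion $\Sigma\subset p$ yields $p\in C$, and for any $q\in C$ with open definable $U\in q$ one has $U\in\Theta\subset p$. Recalling that $q\in\overline{\{p\}}$ is equivalent to every open definable set in $q$ lying in $p$, this gives $C\subset\overline{\{p\}}$, and therefore $\overline{\{p\}}=C$.

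For the uniqueness of the generic point I would invoke (A1) once more: if $\overline{\{p\}}=\overline{\{p'\}}$ then $p$ and $p'$ contain exactly the same open (equivalently, closed) definable subsets of $X$, and by (A1) this forces $p=p'$. Thus (A1) plays a double role, guaranteeing both the $\mathrm{T}_0$ separation and the uniqueness of generic points. The subtlest step is the introduction of the prime collection $\Theta$ alongside $\Sigma$ when constructing the generic type: any extension of $\Sigma$ already sits inside $C$, but its closure may a priori be a proper subset of $C$, and adjoining $\Theta$ is precisely what forces the closure to fill out all of $C$.
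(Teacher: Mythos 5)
Your proof is correct and follows essentially the same route as the paper: quasi-compactness and the identification of quasi-compact opens via the continuous comparison map from the Stone topology, $\mathrm{T}_0$ and uniqueness of generic points from (A1), and sobriety by completing the collection of closed definable sets containing $C$ together with (complements of) those it fails to lie in, using irreducibility for finite consistency. Your set $\Theta$ of open definable sets meeting $C$ is just a reformulation of the paper's negated closed formulas, so the arguments coincide.
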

\begin{proof}
It is clear that the map $S_X(\M) \to \Spec_X(\M)$ given by $p\mapsto p$ is continuous and thus $\Spec_X(\M)$ is quasi-compact. Also, it is $\mathrm{T}_0$ by (A1) because any two distinct types differ in a closed set. Moreover, the continuity of the map above yields that every basic open set is quasi-compact. Hence, the set of all quasi-compact open sets is precisely the set of all basic open sets, by model theoretic compactness. Thus (S2) and (S3) follow. Finally, to show that $\Spec_X(\M)$ is sober, fix a nonempty closed and irreducible subset $C$ of $\Spec_X(\M)$. The set
\[
\Sigma(x)=\{ \phi(x)\in \LL_M \ | \ C\subset [\phi],\  [\phi] \text{ closed} \} \cup \{  \neg\phi(x)\in \LL_M \ | \ C\not\subset [\phi],\ [\phi] \text{ closed}\}
\]
is consistent and in fact it determines a unique complete type by (A1). If $p$ denotes this unique completion, we conclude that $C=\mathrm{cl}^{\rm t}(p)$. Consequently (S4) also holds because any type $q$ satisfying $C=\mathrm{cl}^{\rm t}(q)$ coincides with the completion $p$ of $\Sigma$. \end{proof}

\begin{remark}1) Condition (A1) is equivalent to the fact that $\Spec_X(\M)$ is $\mathrm{T}_0$. %This follows from the fact that two types are distinct if and only if they can be separated by a closed set. On one hand, this yields that $\Spec_X(\M)$ is T0 under the assumption (A1). On the other, a standard topological argument \cite[Exercise 3.1.1]{TZ12} implies that every definable set is a Boolean combination of closed sets whenever $\Spec_X(\M)$ is T0. 
	
\noindent 2) If a subset $\widetilde V$ of $S^{\rm t}_X(\M)$ is open and quasi-compact, then $\widetilde V=[V]$ for some open definable subset $V$ of $X$. In particular, if $\widetilde C\subset S^{\rm t}_X(\M)$ satisfies that $S^{\rm t}_X(\M)\setminus \widetilde C$ is open and quasi-compact, then $\widetilde C=[C]$ for a closed definable subset $C\subset X$. Thus, the collection of sets of the form $[U]$ and $[C]$ with $U\subset X$ definable open and $C\subset X$ definable closed constitutes a subbasis of the constructible topology of $S^{\rm t}_X(\M)$ (see \cite[Def.\,1.3.11]{DST19}). So by (A1), the sets of the form $[Y]$ for $Y$ a definable subset of $X$ is a basis of the constructible topology. Consequently, the constructible topology of $S^{\rm t}_X(\M)$ is the Stone topology of $S_X(\M)$.
\end{remark}

As we already noticed in the Introduction,  spectral spaces have played an important role in real algebraic geometry. In \cite{CR82} the authors introduce the real spectra of a ring and prove that it is a normal spectral space with good properties. Assumption (A2) implies the same properties  in our context:

\begin{lemma}\label{lemanormal}
The space $\Spec_X(\M)$ is normal. Moreover, for all $p,p_1,p_2\in \Spec_X(\M)$ with $p_1,p_2\in \mathrm{cl}^{\rm t}(p)$, either $p_1\in \mathrm{cl}^{\rm t}(p_2)$ or $p_2\in \mathrm{cl}^{\rm t}(p_1)$.
\end{lemma}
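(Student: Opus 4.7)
The plan is to prove both assertions by reducing to the behaviour of closed definable subsets of $X$ and invoking (A2) at the appropriate moment.

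For normality, I would take two disjoint closed subsets $\widetilde C_1,\widetilde C_2\subset\Spec_X(\M)$ and write each as an intersection of basic closed sets $[C]$ with $C$ closed definable in $X$. Since $\Spec_X(\M)$ is quasi-compact by Fact \ref{F:Spectral}, the emptiness of $\widetilde C_1\cap\widetilde C_2$ is witnessed by finitely many of these basic closed sets; after intersecting, this produces closed definable $F,G\subset X$ with $F\cap G=\emptyset$, $\widetilde C_1\subset[F]$ and $\widetilde C_2\subset[G]$. Applying (A2) to $F$ and $G$ inside $X$ yields disjoint open definable $U_1\supset F$ and $U_2\supset G$, so that $[U_1]$ and $[U_2]$ are disjoint open neighborhoods of $\widetilde C_1$ and $\widetilde C_2$ in $\Spec_X(\M)$.

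For the chain property of the specialization order I would argue by contradiction, assuming $p_1,p_2\in \mathrm{cl}^{\rm t}(p)$ while $p_1\notin\mathrm{cl}^{\rm t}(p_2)$ and $p_2\notin\mathrm{cl}^{\rm t}(p_1)$. Unwinding the spectral topology yields closed definable $C_1\in p_1\setminus p_2$ and $C_2\in p_2\setminus p_1$. The key dual reformulation of the closure condition is that $p_i\in\mathrm{cl}^{\rm t}(p)$ iff every open definable set in $p_i$ lies in $p$; since $X\setminus C_1$ and $X\setminus C_2$ are open and lie in $p_2$ and $p_1$ respectively, they both lie in $p$, whence $C_1\cap C_2$ lies in none of $p$, $p_1$ or $p_2$. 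I would then pass to the open definable subset $X':=X\setminus(C_1\cap C_2)$, inside which $C_1':=C_1\cap X'$ and $C_2':=C_2\cap X'$ are disjoint closed subsets. Applying (A2) inside $X'$ produces disjoint open definable $U_1\supset C_1'$ and $U_2\supset C_2'$, which remain open in $X$ since $X'$ is open in $X$. One checks $C_i'\in p_i$ (from $C_i\in p_i$ and $X'\in p_i$), so $p_i\in[U_i]$; the closure condition then forces $U_1,U_2\in p$, contradicting $U_1\cap U_2=\emptyset$.

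The main obstacle I anticipate lies in the second assertion, namely finding the right definable subset in which to apply (A2). A naive attempt to apply (A2) directly to $C_1$ and $C_2$ in $X$ fails because these closed sets may intersect; the trick is to delete the intersection $C_1\cap C_2$ from $X$, which is legitimate precisely because the preparatory step already showed that this intersection lies in none of the three types $p$, $p_1$, $p_2$. This is also the point where the strengthening of definable normality to \emph{complete} definable normality — the separation property inside every definable subset, not only inside the ambient space — is essential.
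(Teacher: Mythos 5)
Your proof is correct and follows essentially the same route as the paper's: normality via quasi-compactness to reduce to two disjoint closed definable sets and then (A2), and the specialization chain property by deleting $C_1\cap C_2$ (which lies in none of the three types) and applying complete definable normality inside the resulting open definable subset. No issues.
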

\begin{proof}
Indeed, let $C_1$ and $C_2$ be two disjoint closed subsets of $\Spec_X(\mathcal{M})$. Then there are families of closed definable subsets $\{X_i\}_{i\in I}$ and \{$Y_j\}_{j\in J}$ such that $C_1=\bigcap_i [X_i]$ and $C_2=\bigcap_j [Y_j]$. By quasi-compactness there are two disjoint closed definable subsets $X_{i_0}$ and $Y_{j_0}$ such that $C_1\subset [X_{i_0}]$ and $C_2\subset [Y_{j_0}]$. As $\M$ is definably normal (condition (A2)), we conclude that there are disjoint definable open subsets $U$ and $V$ with $X_{i_0}\subset U$ and $Y_{j_0}\subset V$, so $C_1\subset [U]$ and $C_2\subset [V]$ with $[U]\cap [V]=\emptyset$, as required.

For the second part, let $p_1,p_2\in \mathrm{cl}^{\rm t}(p)$ and suppose that $p_1\notin \mathrm{cl}^{\rm t}(p_2)$ and $p_2\notin \mathrm{cl}^{\rm t}(p_1)$. Then there are two closed definable subsets $C_1$ and $C_2$ of $X$ such that $C'_1:=C_1\setminus C_2\in p_1$ and $C'_2:=C_2\setminus C_1\in p_2$. Consider the open definable set $U:=X\setminus (C_1\cap C_2)$ of $X$. Since $U$ is definably normal and both $C'_1$ and $C'_2$ are closed in $U$, there are open definable sets $V_1$ and $V_2$ of $U$ such that $V_1\cap V_2=\emptyset$ with $C'_1\subset V_1$ and $C'_2\subset V_2$. In particular, since $p_i\in [V_i]$ for each $i=1,2$, we obtain $p\in [V_1]\cap [V_2]$, so $V_1\cap V_2\neq \emptyset$, a contradiction.\end{proof}

Another milestone in real algebraic geometry is the paper of Carral and Coste \cite{CC83}, where they compare the cohomological and Krull dimensions of a normal spectral space. In their analysis the set of closed points plays an important role, because the cohomological dimension of a normal spectral space coincides with the cohomological dimension of its closed points. 
 Recall that in a topological space $S$, given $x,y\in S$, we say that $x$ is a {\em specialization} of $y$ if $y\in \overline{\{x\}}$.
 Usually, this is denoted by $x \rightsquigarrow y$. Note that the arrow $\rightsquigarrow$ induces a partial order, and that with this notation Lemma \ref{lemanormal} says that for every $p\in S^t_X(\M)$ the set $\mathrm{cl}^{\rm t}(p)$ is totally ordered under specialization. We denote by $\text{Max}(S)$ the set of \emph{maximal points} of $S$ with respect to the specialization order. Clearly, the maximal points are  the closed points of $S$. In the context of normal spectral spaces, the maximal points have the following crucial property:

\begin{fact}\label{fact:max}Let $S$ be a normal spectral space. Then $\text{Max}(S)$ is a quasi-compact Hausdorff space. Moreover, for each $x\in S$ there is a unique $r(x)\in \text{Max}(S)$ with $r(x)\in \text{cl}(x)$, and the retraction map $S\rightarrow \text{Max}(S):x\mapsto r(x)$ is continuous, closed and proper.
\end{fact}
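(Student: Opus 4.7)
The plan is to construct the retraction $r$ by hand via Zorn's lemma, and then derive each listed property (Hausdorffness of $\mathrm{Max}(S)$, existence and uniqueness of $r(x)$, continuity of $r$, and closedness/properness) using only normality together with quasi-compactness of $S$. No outside theory of spectral spaces should be needed.

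First I would handle Hausdorffness and uniqueness of $r(x)$ at the same time, since they use the exact same ingredient. Any two distinct points $y_1\neq y_2$ of $\mathrm{Max}(S)$ are closed in $S$, so normality produces disjoint opens $V_1\ni y_1$, $V_2\ni y_2$; their traces on $\mathrm{Max}(S)$ give the Hausdorff separation. The same disjoint $V_i$ rule out two distinct closed points both lying in $\overline{\{x\}}$: if $y_1,y_2\in \overline{\{x\}}$, the condition $y_i\in \overline{\{x\}}$ forces $x\in V_i$, yielding $x\in V_1\cap V_2=\emptyset$, which is absurd. For existence I would order $\overline{\{x\}}$ by $y\leq z$ iff $z\in \overline{\{y\}}$. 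A chain $(y_i)$ yields a descending family $\{\overline{\{y_i\}}\}$ of closed subsets of the quasi-compact $S$; the finite intersection property makes $\bigcap_i\overline{\{y_i\}}$ nonempty, so any of its elements is an upper bound for the chain. Zorn then provides a maximal element, which by maximality must be a closed point of $S$.

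The hard part will be continuity. The clean reformulation, made possible by the uniqueness just proved, is that $r^{-1}(U\cap \mathrm{Max}(S))=\{x\in S:r(x)\in U\}$ for any open $U\subset S$. Given $x_0$ with $y_0:=r(x_0)\in U$, I would apply normality to the disjoint closed sets $\{y_0\}$ and $S\setminus U$ to produce disjoint opens $V_1\ni y_0$ and $V_2\supset S\setminus U$. Since $V_1$ is open and meets $\overline{\{x_0\}}$ at $y_0$, one has $x_0\in V_1$. Now for any $x\in V_1$, if $r(x)$ lay in $S\setminus U\subset V_2$, then $V_2$ would meet $\overline{\{x\}}$, forcing $x\in V_2$ and contradicting $V_1\cap V_2=\emptyset$; hence $V_1\subset r^{-1}(U\cap \mathrm{Max}(S))$, and $r$ is continuous. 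The delicate point here is to apply normality to the closed point $y_0=r(x_0)$ and not to $x_0$ itself—this is what lets the separation of $y_0$ from $S\setminus U$ propagate to a neighbourhood of $x_0$.

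Once continuity is in place, the remaining properties fall out quickly. Quasi-compactness of $\mathrm{Max}(S)=r(S)$ is immediate as a continuous image of a quasi-compact space. For closedness I would show $r(C)=C\cap \mathrm{Max}(S)$ whenever $C\subset S$ is closed: for $x\in C$ one has $r(x)\in \overline{\{x\}}\subset C$, and every closed point of $S$ lying in $C$ is its own image under $r$. Since $C\cap \mathrm{Max}(S)$ is manifestly closed in $\mathrm{Max}(S)$, the map $r$ is closed. Properness is then automatic: compact subsets of the Hausdorff quasi-compact space $\mathrm{Max}(S)$ are exactly its closed subsets, and their preimages under the continuous $r$ are closed in the quasi-compact $S$, hence quasi-compact.
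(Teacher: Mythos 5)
Your proof is correct. Note that the paper does not actually prove this fact: it only cites \cite[Prop.\,3]{CC83} and \cite[Prop.\,4.1.2]{DST19}, so your self-contained argument is a genuinely different (and welcome) route. The closest thing in the paper is the specialized proof of Corollary \ref{retractionspec} for $\Spec_X(\M)$, and the comparison is instructive. Existence and uniqueness of $r(x)$ are handled the same way in both (Zorn on the closure, then normality to rule out two closed points in one closure). For continuity the paper works with preimages of \emph{closed} sets, showing $r^{-1}([C])=\bigcap\{[\mathrm{cl}_X(V)] : C\subset V \text{ open definable}\}$ via a compactness argument in the Stone topology plus definable normality; you instead take a point $x_0$ with $r(x_0)\in U$, separate the closed point $r(x_0)$ from $S\setminus U$ by disjoint opens $V_1,V_2$, and observe that specialization forces $V_1\subset r^{-1}(U\cap\mathrm{Max}(S))$. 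Your version is shorter, purely topological, and (as you note) uses only quasi-compactness, $\mathrm{T}_0$ and normality, not soberness or the basis axioms; the paper's version has the advantage of exhibiting $r^{-1}([C])$ explicitly as a pro-definable set, which is the form used later in the paper. For closedness the paper simply invokes ``continuous map from a quasi-compact space to a Hausdorff space,'' while your identity $r(C)=C\cap\mathrm{Max}(S)$ is an equally valid and slightly more informative computation. All individual steps of your argument check out, including the two places where you use that a point of $\overline{\{x\}}$ pulls $x$ into each of its open neighbourhoods.
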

\begin{proof}See  \cite[Prop.\,3]{CC83}  and \cite[Prop.\,4.1.2]{DST19}.
\end{proof}
If $X$ is a semialgebraic set, then the set of maximal points of the spectrum of the ring of semialgebraic functions on $X$ (see Subsection \ref{realspectrum}) is traditionally denoted with the letter $\beta$ because it is related with the Stone-\v{C}ech compactification of $X$, see \cite{FG12}. Since our context is similar to the latter, given a definable set $X$ of $\mathcal{M}$ we will denote by $\beta_X(\mathcal{M})$  the set of closed points of $\Spec_X(\mathcal{M})$, that is,
\[
\beta_X(\mathcal{M}) =\{ p\in \Spec_X(\mathcal{M}) \ | \ \{ p \} \text{ is closed}\} 
.\] Note that $X$ is included in $\beta_X(\mathcal M)$ by considering the map $a\mapsto \tp(a/M)$ for $a$ in $X$. Moreover, the induced topology from $M^n$ on $X$ coincides with the one induced from $\beta_X(\M)$ on $X$. The following is a straightforward  consequence of Lemma \ref{lemanormal} and Fact \ref{fact:max}. 
\begin{cor}\label{retractionspec}{\em (1)} The space $\beta_X(\mathcal M)$ is quasi-compact and Haussdorf.
	
\noindent {\em (2)} There is a continuous closed proper retraction $r:\Spec_X(\mathcal{M})\rightarrow \beta_X(\mathcal{M})$ that maps $p\in\Spec_X(\mathcal{M})$ to the unique $m\in \beta_X(\mathcal{M})$ such that $m\in \mathrm{cl}^{\rm t}(p)$.
\end{cor}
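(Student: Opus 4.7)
The plan is to deduce both statements directly from Fact \ref{fact:max} once we verify its hypotheses for the spectral space $\Spec_X(\mathcal M)$. By Fact \ref{F:Spectral} the space $\Spec_X(\mathcal M)$ is spectral, and by Lemma \ref{lemanormal} it is normal. The only further point to observe is that the set $\beta_X(\mathcal M)$ introduced in the paragraph preceding the corollary, defined as the set of closed points of $\Spec_X(\mathcal M)$, coincides with $\text{Max}(\Spec_X(\mathcal M))$ in the sense of the specialization preorder: $p$ is closed precisely when $\mathrm{cl}^{\rm t}(p)=\{p\}$, i.e.\ precisely when no proper specialization of $p$ exists, as remarked in the discussion of the specialization order.

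Having identified $\beta_X(\mathcal M)=\text{Max}(\Spec_X(\mathcal M))$, part (1) is immediate: Fact \ref{fact:max} gives that $\text{Max}(S)$ is quasi-compact and Hausdorff whenever $S$ is normal and spectral.

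For part (2), Fact \ref{fact:max} furnishes a continuous, closed and proper retraction $r\colon \Spec_X(\mathcal M)\rightarrow \beta_X(\mathcal M)$ such that, for every $p\in \Spec_X(\mathcal M)$, the value $r(p)$ is the unique closed point in $\mathrm{cl}^{\rm t}(p)$. Uniqueness already drops out of Fact \ref{fact:max}, but one can also see it directly from the second clause of Lemma \ref{lemanormal}: any two elements of $\mathrm{cl}^{\rm t}(p)$ are comparable under specialization, so $\mathrm{cl}^{\rm t}(p)$ can contain at most one maximal (equivalently, closed) point.

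The argument is thus essentially a bookkeeping step; there is no substantial obstacle, since all the work has been done in Fact \ref{F:Spectral}, Lemma \ref{lemanormal} and the cited Fact \ref{fact:max}. The only mild care needed is to match the paper's definition of $\beta_X(\mathcal M)$ (closed points) with the notation $\text{Max}(S)$ (maximal points in the specialization order) used in Fact \ref{fact:max}, which is immediate from the definition of $\rightsquigarrow$.
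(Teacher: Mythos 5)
Your proposal is correct, and it is precisely the derivation the paper itself announces in the sentence preceding the corollary (``a straightforward consequence of Lemma \ref{lemanormal} and Fact \ref{fact:max}''): verify that $\Spec_X(\M)$ is a normal spectral space, identify $\beta_X(\M)$ with $\mathrm{Max}(\Spec_X(\M))$, and quote Fact \ref{fact:max}. The only divergence is that the paper's written proof deliberately does not black-box Fact \ref{fact:max}: it re-derives everything in this specific context, obtaining the existence of a closed specialization via Zorn's lemma on $\{\mathrm{cl}^{\rm t}(y):y\in\mathrm{cl}^{\rm t}(p)\}$, uniqueness from normality, continuity by an explicit computation showing $r^{-1}([C])=\bigcap\{[\mathrm{cl}_X(V)] : C\subset V\subset X \text{ open definable}\}$ using quasi-compactness and (A2), and closedness/properness from the fact that a continuous map from a quasi-compact space to a Hausdorff space is closed. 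Your version buys brevity at the cost of relying on the external references \cite{CC83} and \cite{DST19} behind Fact \ref{fact:max}; the paper's version is longer but self-contained and exhibits the model-theoretic content of the retraction. Both are complete proofs.
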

\begin{proof}We give a  proof specialized to our context for the sake of the reader. Let us show first that for every $p\in \Spec_X(\M)$ there is $m\in \beta_X(\mathcal{M})$ such that $m\in \text{cl}^{\rm t}(p)$. Since $\Spec_X(\M)$ is quasi-compact, by Zorn's lemma the set $\{\text{cl}^{\rm t}(y):y\in \text{cl}^{\rm t}(p)\}$ has a minimal element $\text{cl}^t(m)$ with respect to the inclusion partial order. The space $\Spec_X(\M)$ is $\mathrm{T}_0$ and therefore $m\in \beta_X(\mathcal{M})$, as required. Moreover, by Lemma \ref{lemanormal} such a type $m\in \beta_X(\mathcal{M})\cap \text{cl}^t(p)$ is the unique closed point in $\text{cl}^{\rm t}(p)$. Indeed, if $m,m'\in \mathrm{cl}^{\rm t}(p)\cap \beta_X(\M)$ were distinct, then there would exist two disjoint respective open sets $\widetilde U$ and $\widetilde V$ of $m$ and $m'$ with $p\in \widetilde U\cap \widetilde V$, which is a contradiction. 

Once we have established that for every $p\in \Spec_X(\M)$ there is a unique specialization $m\in \beta_X(\mathcal{M})$ such that $m\in \mathrm{cl}^{\rm t}(p)$, we define
\begin{align*}
    r: \Spec_X(\mathcal{M}) & \rightarrow \beta_X(\mathcal{M}) \\ p & \mapsto r(p)=m.
\end{align*}
It remains to check that the  retraction $r$ is continuous and closed. To show that $r$ is continuous, we fix a definable closed subset $C$ of $X$. We first claim that:
\begin{equation}
    r^{-1}([C])=\bigcap \left\{ [V]  \ | \ C\subset V\subset X \text{ open and definable}\right\}.
\end{equation}
Fix $p\in r^{-1}([C])$  and suppose there is an open definable subset $V$ of $X$ containing $C$ such that $p\notin[V]$. Since $r(p)\in \mathrm{cl}^{\rm t}(p)$, we deduce $r(p)\notin [V]$, so $r(p)\notin [C]$, which is a contradiction. To prove the other inclusion, suppose that $p\in [V]$ for every open definable subset $V$ of $X$ containing $C$, and assume that $p\notin r^{-1}([C])$. Then $r(p)\notin [C]$, so $r(p)\in [X\setminus C]$. Since $X\setminus C$ is open and $r(p)\in \mathrm{cl}^{\rm t}(p)$, we deduce that $p\in [X\setminus C]$, so $p\notin [C]$. Thus, we have 
\begin{equation}
[C]\subset \bigcup \left\{ [X\setminus F] \ | \ p\in [F] \text{ and } F\subset X \text{ definable and closed} \right\}.    
\end{equation}
Indeed, suppose there is $q\in [C]$ such that $q\in [F]$ for every closed definable subset $F$ of $X$ with $p\in[F]$. Thus $q\in \mathrm{cl}^{\rm t}(p)$, so $r(q)=r(p)$. Since $q\in [C]$ and $C$ is closed, $r(q)\in [C]$ and hence $r(p)\in [C]$, which is a contradiction. Therefore, we obtain (2). Now, since $[C]$ is quasi-compact in the Stone topology we deduce that $[C]$ is contained in $[X\setminus F]$ for some closed definable subset $F$ of $X$ with $ p\in [F]$. Thus,  since $p\in [V]$ for every open definable subset $V$ of $X$ containing $C$, and $X\setminus F$ is an open definable subset with $C\subset X\setminus F$, the choice of $p$ yields that $p\in [X\setminus F]$, which is a contradiction. Therefore, we obtain (1).

Finally, since $\mathcal{M}$ is definably normal, we deduce that 
\begin{equation}
r^{-1}([C])=\bigcap \left\{ [\mathrm{cl}_X(V)] \ | \ C\subset V\subset X \text{ open and definable}\right\},
\end{equation} 
where $\mathrm{cl}_X(V)$ denotes the topological closure of $V\subset X$ in $X$. Indeed, by (1) it is enough to show that if $p\in [\mathrm{cl}_X(V)]$ for every open definable subset $V$ of $X$ containing $C$, then $p\in [V]$ for any such $V$. Fix an  open definable subset $V$ of $X$ containing $C$ and note that there exists an open definable subset $U$ such that $C\subset U\subset \mathrm{cl}_X(U)\subset V$. It is enough to take two disjoint open definable sets $U$ and $W$ with $C\subset U$ and $V^c \subset W$, which exist as $\M$ is definably normal (by the (A2) assumption). Hence,  we have that $p\in [\mathrm{cl}_X(U)]$ as $U$ is open and definable, so $p\in [V]$, as desired. Therefore, we obtain (3) and $r$ is continuous. 
 
Once we have showed that $r:\Spec_X(\mathcal{M})\rightarrow \beta_X(\mathcal{M})$ is continuous, let us remark that it is also closed and proper because $\Spec_X(\mathcal{M})$ is quasi-compact and $\beta_X(\mathcal{M})$ is Hausdorff.
\end{proof}

The set of closed points contains a set which is important from a model-theoretic point if view. 

\begin{lemma}\label{L:BetaClosed}
Let $X\subset M^n$ be definable. The set $\beta_X(\M)$ contains the set of all realized types, which is dense in $\Spec_X(\M)$.
\end{lemma}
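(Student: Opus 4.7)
The plan is to verify the two assertions separately.

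For closedness, I would fix $p = \tp(a/M)$ with $a \in X$ and show $\mathrm{cl}^{\rm t}(\{p\}) = \{p\}$. Unpacking the definition of the spectral topology, $q \in \mathrm{cl}^{\rm t}(\{p\})$ means exactly that every open definable $U$ with $U \in q$ satisfies $U \in p$; by passing to complements (and using that $U \subset X$ is open definable iff $X \setminus U$ is closed definable), this is equivalent to saying that every closed definable $C \subset X$ with $C \in p$ also satisfies $C \in q$. By (A1) a complete type over $M$ is determined by the closed definable sets it contains, so to conclude $q = p$ it is enough to establish the reverse implication: if $C \subset X$ is closed definable and $C \in q$, then $a \in C$.

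Here I would invoke (A2). Suppose for contradiction $a \notin C$. Since the topology on $\M$ is $\mathrm{T}_1$ and $a \in M^n$, the singleton $\{a\}$ is closed and definable, so $\{a\}$ and $C$ are disjoint closed definable subsets of $X$. By (A2) there exist disjoint open definable sets $V_1, V_2 \subset X$ with $a \in V_1$ and $C \subset V_2$. Then $V_2$ is an open definable set with $V_2 \in q$ (since $C \subset V_2$) but $V_2 \notin p$ (since $a \in V_1$ and $V_1 \cap V_2 = \emptyset$), contradicting $q \in \mathrm{cl}^{\rm t}(\{p\})$.

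For density, the basic open sets of $\Spec_X(\M)$ have the form $[U]$ with $U \subset X$ open and definable, so it suffices to show that every nonempty such $[U]$ meets the set of realized types. But $[U] \neq \emptyset$ forces $U \neq \emptyset$, and then any $a \in U \subset M^n$ provides $\tp(a/M) \in [U]$, a realized type (and, by the first part, a closed point).

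I expect the only mild subtlety to be the dualization of the topological closure condition into a model-theoretic condition on closed definable sets; once this dictionary is set up, (A2) closes the argument quickly, and density is immediate.
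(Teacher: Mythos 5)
Your proof is correct, but the closedness half takes a longer route than necessary, and in particular the appeal to (A2) is superfluous. The paper's argument is a one-liner: since the topology on $\M$ is $\mathrm{T}_1$, the singleton $\{a\}$ is a closed definable set, hence $[x=a]$ is a closed subset of $\Spec_X(\M)$, and it contains only the type $\tp(a/M)$; so $\tp(a/M)$ is a closed point. In fact, your own dualization already contains this shortcut: you correctly observe that $q\in \mathrm{cl}^{\rm t}(\{p\})$ implies that every closed definable set in $p$ lies in $q$; applying this to the single closed definable set $\{a\}\in p$ gives $\{a\}\in q$, whence $q=\tp(a/M)=p$, and you are done without ever needing the reverse implication or the separation axiom. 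What your extra (A2) step buys is nothing here, though it is a correct argument and the same separation idea does reappear elsewhere in the paper (e.g.\ in Lemma \ref{lemanormal}). The density argument is exactly the paper's: a nonempty basic open $[U]$ comes from a nonempty definable $U$, which is realized in $\M$.
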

\begin{proof}
Since $\M$ is equipped with a $\mathrm{T}_1$ topology, the definable set $\{m\}$ is closed for each $m\in X$. Thus, the set $[x=m]\subset \Spec_X(\M)$ is also closed. As $[x=m]$ only contains the type $\tp(m/M)$, realized types belong to $\beta_X(\M)$. Furthermore, the set of realized types is dense since any consistent $\LL_M$-formula is realized in $\M$.
\end{proof}

We point out that in general $\beta_X(\M)$ does not have a good behavior under elementary extensions nor intersections. To illustrate this we consider the following toy example.  
\begin{remark}\label{R:BetaNotGood}
Let $\R$ be the field structure of the real numbers and let $X=\mathbb R$. It is immediate to verify that
\[
\beta_X(\R)=\left\{ \tp(a/\mathbb R) \right\}_{a\in \mathbb R}\cup\{ p_{-},p_{+}\},
\]
where $p_{-}:=\{x<a\ | \ a\in\mathbb R\}$ and $p_{+}:=\{x>a\ | \ a\in\mathbb R\}$. We remark:
\begin{enumerate}[(i)]
    \item The space $\beta_X(\R)$ does not behave well under elementary extensions nor restrictions. Indeed,  if  $\mathcal R_1$ is an elementary extension of $\R$ and $\varepsilon\in R_1$ is a positive infinitesimal, then $\tp(\varepsilon/R_1)\in \beta_X(\mathcal R_1)$ whereas $\tp(\varepsilon/\mathbb{R})\notin \beta_X(\mathcal{R})$.
    \item Let $Y=(0,\infty) \subset X$. Then $\beta_Y(\R) \neq \beta_X(\R)\cap [Y]$.  Indeed,  using the notation from (i), we have 
    \[
\beta_Y(\R)=\left\{ \tp(a/\mathbb R) \right\}_{a\in \mathbb R_{>0}}\cup\{ \tp(\varepsilon/\mathbb R),p_{+}\}.
\]
    Hence, the type $\tp(\varepsilon/\mathbb{R})\in \beta_Y(\R)$ but it does not belong to $\beta_X(\R)\cap [Y]$. 
    
    \item It also follows that $\beta_X(\R)$ is not closed with respect to the spectral topology. For example, the type $\tp(\varepsilon/\mathbb R)\in \mathrm{cl}^{\rm t}(\beta_X(\R))$. In this case, the subset  $\beta_X(\R)$ is closed if and only if $X$ is finite.
\end{enumerate}
\end{remark}

\subsection{A universal property for closed points}\label{sectio:universal}

Let $\mathcal{M}$ be a topological structure as above satisfying (A1) and (A2) and let $X\subset M^n$ be a definable set. We consider the subspace topology on $X$ and we establish  a universal property for $\beta_X(\mathcal{M})$ in the vein of \cite[Lem.\,3.7]{GPP14}.

\begin{definition}Let  $C$ be a topological space. A continuous map $f:X\rightarrow C$ is {\em definably separated by closed sets} if for every two disjoint closed subsets $C_1$ and $C_2$ of $C$ there are two disjoint closed definable subsets $X_1$ and $X_2$ of $X$ such that $f^{-1}(C_1)\subset X_1$ and $f^{-1}(C_2)\subset X_2$.	
\end{definition}	

\begin{remark}\label{R:DefSep} The map $i: X\hookrightarrow \beta_X(\mathcal{M})$ is clearly definably separated by closed sets. Indeed, given two disjoint closed subsets $C_1$ and $C_2$ of $\beta_X(\M)$, by quasi-compactness there are two closed definable subsets $V_1$ and $V_2$ of $X$ such that each $C_j \subset \beta_X(\M)\cap [V_j]$ and 
\[
\left( \beta_X(\M) \cap [V_1] \right) \cap \left( \beta_X(\M) \cap [V_2] \right) =\emptyset.
\] 
Since each $i^{-1}\left( \beta_X(\M) \cap [V_j] \right))=V_j$, it follows immediately that $V_1\cap V_2=\emptyset$ and $i^{-1}(C_j) \subset V_j$, as required.
\end{remark}
It is easy to show that given a continuous map $f:X\rightarrow C$ from $X$ to a topological space $C$, if there exist a continuous extension $\hat f:\beta_X(\mathcal{M})\rightarrow C$ of $f$, then $f$ is definably separated by closed sets, by the remark above. We show the converse:

\begin{prop}Let $f:X\rightarrow C$ be a continuous map, where $C$ is quasi-compact and Hausdorff. If $f$ is a definably separated by closed sets, then there is a continuous extension $\hat f:\beta_X(\mathcal{M})\rightarrow C$ of $f$ to $\beta_X(\mathcal{M})$.
\end{prop}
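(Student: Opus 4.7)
The plan is to construct $\hat{f}$ pointwise via a compactness argument in $C$ and then to show that the hypothesis on $f$ forces both well-definedness and continuity. Concretely, for each $p\in\beta_X(\mathcal{M})$ consider the family of closed subsets of $C$
\[
\mathcal{F}(p)=\bigl\{\,\mathrm{cl}_C(f(V)) \,:\, V\in p,\ V\ \text{definable subset of}\ X\,\bigr\}.
\]
Since $p$ is a filter, $\mathcal{F}(p)$ has the finite intersection property, and since $C$ is quasi-compact, $\bigcap\mathcal{F}(p)\neq\emptyset$. I will define $\hat{f}(p)$ to be the unique element of this intersection.

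The uniqueness (and thus well-definedness) of $\hat{f}(p)$ is the first place the separation hypothesis enters. Suppose $c_1\neq c_2$ both lie in $\bigcap\mathcal{F}(p)$. Since $C$ is compact Hausdorff it is normal, so by iterating normality one finds disjoint closed subsets $D_1,D_2\subset C$ with $c_i\in\mathrm{int}(D_i)$. By hypothesis there are disjoint closed definable subsets $X_1,X_2\subset X$ with $f^{-1}(D_i)\subset X_i$. At most one of $X_1,X_2$ belongs to $p$; if say $X_1\notin p$, then $X\setminus X_1\in p$, and since $f^{-1}(\mathrm{int}(D_1))\subset X_1$ we get $f(X\setminus X_1)\cap\mathrm{int}(D_1)=\emptyset$, contradicting $c_1\in\mathrm{cl}_C(f(X\setminus X_1))$. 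The case $X_2\notin p$ is symmetric. To check that $\hat{f}$ extends $f$, note that by Lemma \ref{L:BetaClosed} each realized type $\mathrm{tp}(a/M)$ lies in $\beta_X(\mathcal M)$, and $f(a)\in\mathrm{cl}_C(f(V))$ whenever $V\ni a$, so $\hat f(\mathrm{tp}(a/M))=f(a)$.

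For continuity, fix an open $U\subset C$ and $p\in\hat{f}^{-1}(U)$; I will produce a basic open neighborhood of $p$ in $\beta_X(\mathcal{M})$ contained in $\hat{f}^{-1}(U)$. Since $C$ is normal, two applications of normality yield disjoint closed sets $D_1,D_2\subset C$ with $\hat{f}(p)\in\mathrm{int}(D_1)$ and $C\setminus U\subset\mathrm{int}(D_2)$. The hypothesis gives disjoint closed definable $X_1,X_2\subset X$ with $f^{-1}(D_i)\subset X_i$. Exactly as in the uniqueness argument, $\hat{f}(p)\in\mathrm{int}(D_1)\setminus\mathrm{cl}_C(f(X\setminus X_1))$ forces $X_1\in p$, hence $X_2\notin p$ and $X\setminus X_2\in p$. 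Then $[X\setminus X_2]\cap\beta_X(\mathcal{M})$ is an open neighborhood of $p$ in $\beta_X(\mathcal{M})$ (because $X\setminus X_2$ is open and definable), and for any $q$ in it one has $\hat{f}(q)\in\mathrm{cl}_C(f(X\setminus X_2))$; but $f(X\setminus X_2)\cap\mathrm{int}(D_2)=\emptyset$, so $\mathrm{cl}_C(f(X\setminus X_2))\subset C\setminus\mathrm{int}(D_2)\subset U$, giving $\hat{f}(q)\in U$ as required.

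The main obstacle is the continuity step: the naive construction $\hat{f}^{-1}(U)=\bigcup\{[V]:\mathrm{cl}_C(f(V))\subset U\}$ is a union of $[V]$ for arbitrary definable $V$, which a priori need not be open in the spectral topology. The separation hypothesis is precisely what lets one replace an arbitrary $V$ by a complement of a closed definable set, recovering openness of the neighborhood; the double application of normality inside $C$ (to build $D_1,D_2$ with the extra room $\mathrm{int}(D_i)$) is what makes this work.
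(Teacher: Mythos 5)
Your proof is correct, and its overall architecture matches the paper's: define $\hat f(p)$ pointwise as the intersection of the closures $\mathrm{cl}_C(f(V))$ over sets $V\in p$, get nonemptiness from the finite intersection property and quasi-compactness of $C$, and use the separation hypothesis to force the intersection to be a singleton. The one genuine divergence is in the continuity step, and it traces back to a small difference in the definition: the paper intersects only over the \emph{open} definable sets in $p$, whereas you intersect over all definable sets in $p$. With the paper's definition, continuity is almost automatic --- given a closed $Z\subset C$ with $\hat f(p)\notin Z$, quasi-compactness of $C$ extracts a single open definable $U\in p$ with $\mathrm{cl}_C(f(U))\cap Z=\emptyset$, and $[U]$ is already a basic open neighborhood witnessing continuity; the separation hypothesis is not used again. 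With your definition, the extracted set is an arbitrary definable $V$, so $[V]$ need not be open, and you correctly compensate by invoking the separation hypothesis a second time (together with the standard double application of normality in $C$ to get the closed sets $D_1,D_2$ with room to spare) in order to replace $V$ by the complement of a closed definable set. You even flag this obstacle explicitly, which shows you understood exactly where the hypothesis is needed. Both routes work; the paper's choice of defining the intersection over open sets only buys a shorter continuity argument, while yours makes the well-definedness slightly more transparent (any definable set in $p$ constrains $\hat f(p)$) at the cost of a second appeal to the hypothesis. A minor bonus of your write-up is that you verify explicitly that $\hat f$ restricts to $f$ on realized types, which the paper leaves implicit.
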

\begin{proof}We first define $\hat f$ as follows: Given a type $p\in \beta_X(\mathcal{M})$, set
$$
\hat f(p):=\bigcap_{U\in p \text{ open}} \overline{f(U)}.$$
The intersection above is not empty, (by quasi-compactness) because it has the finite intersection property. Thus, it suffices to show that the above intersection contains a single point. Suppose otherwise there are $a$ and $b$ in $\hat f(p)$ with $a\neq b$. Since $C$ is regular, there are open respective neighborhoods $U_a$ and $U_b$ of $a$ and $b$ such that $\overline{U}_a\cap \overline{U}_b=\emptyset$. As $f:X\to C$ is a definably separated by closed sets, there exist two disjoint closed definable subsets $X_a$ and $X_b$ of $X$ such that $f^{-1}(\overline{U}_a)\subset X_a$ and $f^{-1}(\overline{U}_b)\subset X_b$. We can assume that $p\notin [X_a]$, so that $p\in [X\setminus X_a]$. Since $X\setminus X_a$ is open, $a\in \hat f(p)\subset \overline{f(X\setminus X_a)}$. Hence, as $a\in U_a$ and $a\in \overline{f(X\setminus X_a)}$, it follows that $U_a\cap f(X\setminus X_a)\neq \emptyset$. In particular, $f^{-1}(\overline{U}_a)\cap (X\setminus X_a)\neq \emptyset$, which is a contradiction as $f^{-1}(\overline{U}_a)\subset X_a$.

Once we have proved that $\hat f$ is a function, we prove that it is  continuous. Let $Z$ be a closed subset of $C$ and suppose that  there is some $p\in \overline{\hat f^{-1}(Z)}\setminus \hat f^{-1}(Z)$. Since $\hat f(p)\notin Z$, we have 
$$
Z\cap \bigcap_{U\in p \text{ open}} \overline{f(U)}=\emptyset,
$$
so $Z\cap \overline{f(U)}=\emptyset$ for some definable open set $U$ with $p\in [U]$ (by quasi-compactness). Since $[U]$ is open and $p\in \overline{\hat f^{-1}(Z)}$, it follows that $[U]\cap \hat f^{-1}(Z)\neq \emptyset$, so there is some $q\in[U]$ such that $\hat f(q)\in Z$. By definition we also have $\hat f(q)\in \overline{f(U)}$, hence $\hat f(q)\in  Z\cap \overline{f(U)}$, which is a contradiction.
\end{proof}	

\begin{remark}\label{jexpansion}Let $X$ be definable in $\M$ and let $\mathcal{M}'$ be an expansion of $\mathcal{M}$ satisfying (A1) and (A2). Then we can consider also $\beta_X(\mathcal{M}')$. It turns out that there is a closed continuous surjective map
$$j:\beta_X(\mathcal{M}')\rightarrow \beta_X(\mathcal{M}).$$
Indeed, the inclusion continuous map  $X\hookrightarrow \beta_X(\mathcal{M})$ is definably separated by closed sets with respect to $\mathcal{M}'$ and therefore there exists a continuous extension $j$. The map $j$ is closed and therefore it is  surjective because $X$ is dense in both $\beta_X(\mathcal{M})$ and $\beta_X(\mathcal{M}')$.

Let us describe the map $j$. Given $p\in S_X(\mathcal{M}')$, denote by $p^\mathcal{M}$ the restriction of $p$ to the language of $\M$, and let $r:\Spec_X(\mathcal{M})\rightarrow \beta_X(\mathcal{M})$ be the natural retraction provided in Corollary \ref{retractionspec}. The map 
\[
\beta_X(\mathcal{M}')  \rightarrow \beta_X(\mathcal{M}) , \  
p \mapsto r(p^\mathcal{M})
\]
is continuous and coincide with $j$ on realized types, so they are equal.
\end{remark}

The following example shows that the map $j$ from Remark \ref{jexpansion} is not a homeomorphism in general. 
\begin{example}
Let $\mathcal{R}$ be the field of real numbers and $\mathcal{R}_{\rm exp}$ be the field of real numbers with the exponentiation. Set $X=\mathbb R^2$ and consider in $\beta_X(\R_{\rm exp})$ the types
\begin{align*}
	q_1 & :=\{Y\subset \mathbb{R}^2 \ | \ Y \text{ definable in }\mathcal{R}_{\rm exp} \ \& \ \exists t_0>0,\forall \, t>t_0, (t,0)\in Y \}, \\
	q_2 & :=\{Y\subset \mathbb{R}^2 \ |  \ Y \text{ definable in }\mathcal{R}_{\rm exp} \ \& \ \exists t_0>0,\forall \, t>t_0, (t,e^{-t})\in Y \}.
\end{align*}
In this case $q_1^\mathcal{R}\in \beta_X(\mathcal{R})$ but $q_2^\mathcal{R}\notin \beta_X(\mathcal{R})$. Moreover, $q_1^\mathcal{R}\in \mathrm{cl}^{\rm t}(q_2^\mathcal{R})$. Indeed, given an open semialgebraic set $U$ of $\mathbb{R}^2$ such that $(t,0)\in U$ for every $t$ large enough, we have $(t,e^{-t})\in U$ for $t$ large enough. Thus,
\[
j(q_1)=r(q_1^\mathcal{R})=r(q_2^\mathcal{R})=j(q_2).
\]
\end{example}

\subsection{Closed types in topological structures with a dimension}\label{subsection:closedtypes}
Let $\mathcal{M}$ be a topological structure as above satisfying (A1) and (A2). Assume further that $\M$ is equipped with a dimension satisfying that for all definable sets $V\subset X\subset M^n$ we have that $\dim(\mathrm{cl}_X(V)\setminus V)<\dim(V)$. This is the case for example if $\mathcal{M}$ is an o-minimal expansion of a real closed field or the $p$-adic field.

\begin{remark}The existence of such a well-behaved dimension map implies assumption (A1). Indeed, note first that assumption (A1) is equivalent to the fact that for any two different types there is an open (equivalently, closed) set which contains one of these points and not the other. This follows by a standard topological argument \cite[Ex.\,3.1.1]{TZ12}. Now, let $p$ and $q$ be two distinct types in $S_X(\M)$. Let $V$ be a definable set which distinguish them, that is to say that $V$ is in one type but not the other. Clearly, we can take $V$ so that $\dim(V)$ is minimal with this property. Assume that $V$ is not closed (as otherwise we are done) and suppose that $V\in p$ but $V\notin q$. Since $\dim(\mathrm{cl}_X(V)\setminus V)<\dim(V)$ and $(\mathrm{cl}_X(V)\setminus V)\notin p$, we have that $(\mathrm{cl}_X(V)\setminus V)\notin q$ as well due to the minimality of the dimension of $V$. Thus, $\mathrm{cl}_X(V)\notin q$ and $\mathrm{cl}_X(V)\in p$, so $p$ and $q$ are distinguished by the closed set $\mathrm{cl}_X(V)$.  
\end{remark}

Let $X\subset M^n$ be a definable set in $\M$. By Corollary \ref{retractionspec} we know that the space of closed types $\beta_X(\M)$ is quasi-compact and Hausdorff and there is a natural retraction $r: \Spec_X(M)\rightarrow \beta_X(\M)$. The following lemma exhibits the relation between dimension and the spectral topology, where the dimension $\dim(p)$ of the type $p$ is defined as $\min\{\dim(X) \ | \ X\in p\}$.

\begin{lemma}\label{L:DimBeta} Let $p,q\in S_X(\M)$. The following holds:
\begin{enumerate}
    \item If $q\in \mathrm{cl}^{\rm t}(p)$ and $q\neq p$, then $\dim(q)< \dim(p)$. 
    \item $\dim(r(p))\le \dim(p)$, with equality if and only if $p\in \beta_X(\M)$.
\end{enumerate}
In particular, for every $p\in S_X(\M)$ the  number of elements in $\mathrm{cl}^{\rm t}(p)$ is less than or equal to $1+\dim(p)$.	
\end{lemma}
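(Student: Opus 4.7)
\medskip

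\noindent\textbf{Proof proposal.} The plan is to prove (1) first and then derive (2) together with the cardinality bound as easy consequences.

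For (1), assume $q\in\mathrm{cl}^{\rm t}(p)$ with $q\neq p$, and set $d=\dim(p)$. The first observation is that specialization is characterized by closed sets: $U\in q$ implies $U\in p$ for every open definable $U$, equivalently, every closed definable set belonging to $p$ belongs to $q$. Since $p\neq q$, assumption (A1) (or the discussion preceding this lemma) produces a closed definable set $F\subset X$ that distinguishes them, and the previous observation forces $F\in q$ but $F\notin p$. Now choose $Y\in p$ with $\dim(Y)=d$ minimal. The key move is to replace $Y$ by
\[
Y':=Y\cap (X\setminus F),
\]
which still lies in $p$ (because $X\setminus F\in p$), still satisfies $\dim(Y')=d$ by minimality of $d$, and is disjoint from $F$. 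Consider then
\[
Z:=\mathrm{cl}_X(Y')\cap F.
\]
The closed set $\mathrm{cl}_X(Y')$ lies in $p$, hence in $q$, and $F\in q$, so $Z\in q$. Since $Y'\cap F=\emptyset$, we have $Z\subset \mathrm{cl}_X(Y')\setminus Y'$, and the standing assumption on the dimension function gives $\dim(Z)\le \dim(\mathrm{cl}_X(Y')\setminus Y')<d$. Therefore $\dim(q)\le \dim(Z)<d=\dim(p)$, as required.

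For (2), observe that $r(p)\in \mathrm{cl}^{\rm t}(p)$ by Corollary~\ref{retractionspec}. If $p\in \beta_X(\mathcal M)$ then $r(p)=p$ and equality is trivial; otherwise $r(p)\neq p$ and (1) yields the strict inequality $\dim(r(p))<\dim(p)$. Conversely, if $\dim(r(p))=\dim(p)$ then we must have $r(p)=p$, i.e. $p\in\beta_X(\mathcal M)$.

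Finally, for the cardinality bound, Lemma~\ref{lemanormal} tells us that $\mathrm{cl}^{\rm t}(p)$ is totally ordered by specialization. Any chain $p=p_0\rightsquigarrow p_1\rightsquigarrow \cdots \rightsquigarrow p_k$ in $\mathrm{cl}^{\rm t}(p)$ with pairwise distinct $p_i$ satisfies, by (1) applied repeatedly, $\dim(p_0)>\dim(p_1)>\cdots>\dim(p_k)\ge 0$, so $k\le \dim(p)$; in particular the chain is finite and contains at most $1+\dim(p)$ types. The main obstacle is the construction in the first paragraph: one needs to transform a large-dimensional witness $Y\in p$ into a \emph{small}-dimensional witness in $q$, and the trick that makes it work is to shrink $Y$ so that it becomes disjoint from the separating closed set $F$, after which the dimension drop on the frontier $\mathrm{cl}_X(Y')\setminus Y'$ can be exploited.
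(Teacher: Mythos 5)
Your argument is correct and follows essentially the same route as the paper's: both proofs separate $p$ from $q$ by a definable set (you use a closed $F\in q\setminus p$, the paper an open $U\in p\setminus q$, which are complementary formulations) and then trap a set belonging to $q$ inside the frontier $\mathrm{cl}_X(V)\setminus V$ of a minimal-dimension set $V\in p$, where the standing hypothesis on the dimension forces a strict drop. Parts (2) and the cardinality bound are deduced exactly as in the paper, from (1) together with Corollary \ref{retractionspec} and Lemma \ref{lemanormal}.
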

\begin{proof}
(1) Let $W\subset X$ be a definable set in $p$ of minimal dimension. Since $q\in \mathrm{cl}^{\rm t}(p)$, we have $\mathrm{cl}_X(W)\in p$ and $\mathrm{cl}_X(W)\in q$, so  we can assume $\dim(p)=\dim(X)$ after substituting $X$ by $\mathrm{cl}_X(W)$. 

Since the spectral topology is $\mathrm{T}_0$ and $p\neq q$, it follows $p\notin \mathrm{cl}^{\rm t}(q)$. Therefore, there is a definable open subset $U$ of $X$ such that $p\in [U]$ but $q\notin [U]$. Since $p\in [\mathrm{cl}_X(U)]$, we deduce  $q\in [\mathrm{cl}_X(U)\setminus U]$, so \[\dim(q)\leq \dim(\mathrm{cl}_X(U)\setminus U)<\dim (\mathrm{cl}_X(U))=\dim(U)=\dim(X)=\dim(p),\] as required.

(2) follows immediately from (1). The bound of the cardinality of $\mathrm{cl}^{\rm t}(p)$ follows from (1) and  Lemma \ref{lemanormal}.
\end{proof}

As we pointed out in Lemma \ref{L:BetaClosed} the space $\beta_X(\M)$ contains all realized types which concentrate on $X$, but it also may contain types of maximal dimension as happens in the example of Remark \ref{R:BetaNotGood}. Nonetheless, these two kinds of closed types may have distinct fibers with respect to the retraction $r$. In fact, the size of the fiber characterizes types of maximal dimension in the o-minimal setting.
\begin{prop}
Let $\R$ be an o-minimal expansion of a real closed field. Let $p\in \beta_{R^n}(\R)$ be any type. Then 
$r^{-1}(\{p\})=\{p\}$ if and only if $\dim(p)=n$. 
   
\end{prop}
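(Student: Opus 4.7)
The plan is to handle the two implications separately. The backward direction follows essentially for free from Lemma \ref{L:DimBeta}, while the forward direction requires constructing a properly larger-dimensional type specializing to $p$.

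For $(\Leftarrow)$, suppose $\dim(p)=n$ and take any $q\in r^{-1}(\{p\})$. By definition of $r$, we have $p=r(q)\in \mathrm{cl}^{\rm t}(q)$. If $q\neq p$, Lemma \ref{L:DimBeta}(1) forces $\dim(p)<\dim(q)\leq n$, a contradiction; hence $q=p$.

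For $(\Rightarrow)$, suppose $\dim(p)=k<n$. I aim to construct $q\in S_{R^n}(\R)$ with $q\neq p$ and $p\in \mathrm{cl}^{\rm t}(q)$; since $p\in \beta_{R^n}(\R)$ and $\mathrm{cl}^{\rm t}(q)$ is totally ordered under specialization by Lemma \ref{lemanormal}, this will force $r(q)=p$, producing an element of $r^{-1}(\{p\})\setminus\{p\}$. I consider the partial type
\[
\Sigma(x):=\left\{U(x) \mid U\subset R^n \text{ open definable, } U\in p\right\}\cup\left\{\neg Z(x) \mid Z\subset R^n \text{ definable, } \dim(Z)<n\right\}.
\]
Any finite conjunction of formulas from $\Sigma$ defines a set of the form $U_1\cap\cdots\cap U_m\setminus(Z_1\cup\cdots\cup Z_l)$. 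The set $U_1\cap\cdots\cap U_m$ lies in $p$, hence is nonempty, and is open; in an o-minimal expansion of a real closed field a nonempty open subset of $R^n$ has dimension $n$, so it cannot be contained in the union $Z_1\cup\cdots\cup Z_l$ of sets of dimension strictly less than $n$. Thus $\Sigma$ is consistent and extends to a complete type $q\in S_{R^n}(\R)$. By construction $\dim(q)=n>k=\dim(p)$, so $q\neq p$, and every open definable set in $p$ belongs to $q$, which is precisely the assertion that $p\in \mathrm{cl}^{\rm t}(q)$.

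The main obstacle is conceptual rather than technical: one has to recognize that specialization in the spectral topology is controlled by \emph{open} definable sets, and then exploit the specific o-minimal phenomenon that, over a real closed field, every nonempty open subset of $R^n$ automatically attains the maximal dimension $n$ while a finite union of lower-dimensional definable sets cannot cover it. These two ingredients are exactly what make the partial type $\Sigma$ consistent and furnish, via a standard compactness argument, the extra specialization $q$ of any non-top-dimensional closed point.
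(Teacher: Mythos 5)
Your proof is correct, and the harder direction is argued by a genuinely different method than the paper's. For the implication $\dim(p)=n\Rightarrow r^{-1}(\{p\})=\{p\}$ you and the paper do essentially the same thing (both reduce to Lemma \ref{L:DimBeta}: a proper specialization strictly drops dimension). For the converse, the paper gives an explicit construction: after permuting coordinates it realizes $p$ as $(\bar a,a)$ in an elementary extension with $a\in\dcl(R,\bar a)$, perturbs $a$ to $b=a+\varepsilon$ by an infinitesimal, and verifies by hand that $q=\tp(\bar a,b/R)$ specializes to $p$. You instead run a soft compactness argument: the partial type consisting of all open definable sets in $p$ together with the complements of all definable sets of dimension $<n$ is finitely satisfiable, because a nonempty definable open subset of $R^n$ has dimension $n$ and so cannot be covered by finitely many lower-dimensional sets; any completion $q$ then has $\dim(q)=n\neq\dim(p)$ and satisfies $p\in\mathrm{cl}^{\rm t}(q)$ (specialization is tested exactly on basic opens $[U]$), whence $r(q)=p$ by uniqueness of the closed point in $\mathrm{cl}^{\rm t}(q)$ (Corollary \ref{retractionspec}). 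Your route avoids the elementary-extension and definable-closure bookkeeping and shows slightly more, namely that every non-top-dimensional closed point is the specialization of a type of full dimension $n$; the paper's route has the virtue of exhibiting a concrete, geometrically transparent element of the fiber. Both are complete proofs.
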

\begin{proof}
		Suppose first that there is some $q\neq p$ with $r(q)=p$. Note that $q\not\in \beta_{R^n}(\R)$ and thus $\dim(p)<\dim(q)\leq n$ by Lemma \ref{L:DimBeta}(2).
		
		To prove the other direction, assume that $\dim(p)<n$. We can assume there is an elementary extension $\mathcal N$ of $\mathcal{R}$ and a realization $(\bar a,a)\in N^{n-1}\times N$  of $p$ with $a\in \dcl(R,\bar a)$.  Consider the realized type $p':=\tp(a/N)$, and note there exists some non-realized type $q'=\tp(b/N)$ with $r_\mathcal{N}(q')=p'$, where  $r_\mathcal{N}: \Spec_{N^n}(\mathcal{N}) \rightarrow \beta_{N^n}(\mathcal{N})$. For example, take $b:=a+\varepsilon$ for some infinitesimal $\varepsilon>0$ with respect to $N$. Set $q=\tp(\bar a,b/R)$, which is not $p$ because $b\notin  \dcl(R,\bar a)$.
		
		We claim that $r(q)=p$. To see this, let $U\subset R^n$ be an open $R$-definable set such that $p\in [U]$. We denote by $U(N)\subset N^n$ its realization in $\mathcal{N}$. Consider now the $N$-definable open set
		\[
		W=\{ y \in N \ | \ (\bar a,y)\in U(N) \}
		\]
		and note that $a\in W$. Thus, there is an open interval $V=(c,d)\subset W$ with $c,d\in N$ such that $c<a<d$. It follows that $p'\in [V]$, so $q'\in [V ]$ because $r_{\mathcal{N}}(q')=p'$. Hence, $b\in W$ and so $(\bar a,b)\in U(N)$, which yields that $q\in [U]$. Therefore, as $q\neq p$, we obtain |$r^{-1}(\{p\})|>1$, as required.
\end{proof}

\section{The relation with other topological constructions}\label{section:relation}
In this section we fix an o-minimal expansion $\R$ of a real closed field and we study its connections with its definable real sprectrum, as well as with its space of $\mu$-types introduced by Peterzil and Starchenko \cite{PS17}.

\subsection{The relation with the real spectrum}\label{realspectrum} Let $\mathcal{R}$ be an o-minimal expansion of a real closed field. Let $\mathcal{C}^{\mathcal{R}}(X)$ be the ring of continuous functions from $X$ to $R$ that are definable in $\mathcal{R}$, that is,
\[
\mathcal{C}^{\mathcal{R}}(X) = \left\{ f:X\to R \ | \ f \text{ is continuous and $R$-definable}\right\}.
\] The aim of this section is to prove that $\beta_X(\mathcal{R})$ is homeomorphic to the set of closed points of the real spectrum $\mathrm{Sper}(\mathcal{C}^{\mathcal{R}}(X))$ of $\mathcal{C}^{\mathcal{R}}(X)$. Recall that the set $\mathrm{Sper}(\mathcal{C}^{\mathcal{R}}(X))$ is the collection of \emph{prime cones} of $\mathcal{C}^{\mathcal{R}}(X)$, that is, subsets $\alpha$ of $\mathcal{C}^{\mathcal{R}}(X)$ such that $\mathfrak{p}_\alpha:=\alpha\cap-\alpha$ is a prime ideal of $\mathcal{C}^{\mathcal{R}}(X)$ and $\alpha/\mathfrak{p}_\alpha$ is the positive cone of a total order of $\mathcal{C}^{\mathcal{R}}(X)/\mathfrak{p}_\alpha$ (we refer the reader to \cite[Ch.\,7]{BCR98} for the basic definitions concerning the real spectrum). Let $\rho_\alpha:\mathcal{C}^{\mathcal{R}}(X)\to \mathcal{C}^{\mathcal{R}}(X)/\mathfrak{p}_\alpha$ be the natural homomorphism. The subsets $$D_r(f):=\{\alpha\in\mathrm{Sper}(\mathcal{C}^{\mathcal{R}}(X)) \ | \  \rho_{\alpha}(f)>0\}$$ for $f\in \mathcal{C}^{\mathcal{R}}(X)$, which are called \em basic open subsets\em, constitute a basis of the \emph{spectral topology} of $\mathrm{Sper}(\mathcal{C}^{\mathcal{R}}(X))$.

As we see in the following well-known fact, the ring $\mathcal{C}^{\mathcal{R}}(X)$ has the special feature that the real spectrum is homeomorphic to the (Zariski) spectrum, and therefore we will work with the latter. Before proceeding, let us recall some basic definitions on the (Zariski) spectrum. Let $\mathrm{Spec}(\mathcal{C}^{\mathcal{R}}(X))$ denote the spectrum of $\mathcal{C}^{\mathcal{R}}(X)$, that is, the set of prime ideals of $\mathcal{C}^{\mathcal{R}}(X)$, and we regard it as a topological space with the usual Zariski topology. Recall that given $f\in \mathcal{C}^{\mathcal{R}}(X)$, a basic open set is
\[D(f)=\left\{\mathfrak{p}\in \mathrm{Spec}(\mathcal{C}^{\mathcal{R}}(X)) \ | \ f\notin \mathfrak{p} \right\}.\]

\begin{fact} The support map
	$$
	\mathrm{supp}:\mathrm{Sper}(\mathcal{C}^{\mathcal{R}}(X))\rightarrow \mathrm{Spec}(\mathcal{C}^{\mathcal{R}}(X)), \ \alpha \mapsto \mathfrak{p}_\alpha
	$$
	is a homeomorphism. 
\end{fact}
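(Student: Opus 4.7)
My plan is to establish the homeomorphism by showing that $\mathrm{supp}$ is a continuous bijection that carries basic open sets to basic open sets. The entire argument will rest on one key observation: since $\mathcal R$ is an o-minimal expansion of a real closed field, for every $f\in \mathcal{C}^{\mathcal R}(X)$ the positive and negative parts $f^+:=\max(f,0)$ and $f^-:=\max(-f,0)$ lie in $\mathcal{C}^{\mathcal R}(X)$, and they satisfy $f=f^+-f^-$ and $f^+\cdot f^-=0$. Equivalently, every pointwise non-negative $g\in\mathcal{C}^{\mathcal R}(X)$ is a square, namely $g=(\sqrt{g})^2$.

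First I would record continuity. For $f\in \mathcal{C}^{\mathcal R}(X)$, one has
\[
\mathrm{supp}^{-1}(D(f))=\{\alpha : f\notin \alpha\cap(-\alpha)\}=D_r(f)\cup D_r(-f),
\]
which is open. Next, for bijectivity I would exhibit an inverse. Given a prime ideal $\mathfrak{p}$, the identity $f^+f^-=0\in\mathfrak{p}$ forces at least one of $f^+,f^-$ to be in $\mathfrak{p}$, and both lie in $\mathfrak{p}$ precisely when $f$ does. Thus I set
\[
\alpha_\mathfrak{p}:=\{f\in \mathcal{C}^{\mathcal R}(X) \ | \ f^-\in \mathfrak{p}\},
\]
and verify that $\alpha_\mathfrak{p}$ is a prime cone with support $\mathfrak{p}$. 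Uniqueness of a prime cone above $\mathfrak{p}$ is the conceptual heart: any prime cone $\beta$ with $\mathfrak{p}_\beta=\mathfrak{p}$ must contain every pointwise non-negative function (these being squares in $\mathcal{C}^{\mathcal R}(X)$), so both $f^+$ and $-f^-$ have a predetermined sign in $\beta$; this pins down $f=f^+-f^-$ modulo $\mathfrak{p}$ and hence determines whether $f\in\beta$. In particular $\beta=\alpha_\mathfrak{p}$.

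Finally, openness follows from the computation
\[
\mathrm{supp}(D_r(f))=D(f^+).
\]
Indeed, $\alpha\in D_r(f)$ means $f$ is strictly positive in the order induced by $\alpha$, equivalently $f^-\in \mathfrak{p}_\alpha$ and $f^+\notin \mathfrak{p}_\alpha$; since $f^+f^-=0$ the first condition is forced by the second, and conversely any $\mathfrak{p}\in D(f^+)$ is the support of $\alpha_\mathfrak{p}\in D_r(f)$. Combining the three steps, $\mathrm{supp}$ is a continuous open bijection between basic opens, hence a homeomorphism.

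The main obstacle, in my view, is not any one step but the consistent use of the triple $(f^+,f^-,\sqrt{\cdot})$ to fix signs modulo a prime; the rest is routine verification that $\alpha_\mathfrak{p}$ is closed under addition and multiplication and that its image in the residue field is the positive cone of a total order. This is where the o-minimal expansion of a real closed field is essential, since in an arbitrary ring neither $f^\pm$ nor $\sqrt{g}$ need exist, and the support map is typically not a homeomorphism.
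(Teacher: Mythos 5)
Your argument is correct and essentially the same as the paper's: your $f^{+}$ and $f^{-}$ are $(f+|f|)/2$ and $(|f|-f)/2$, so your key identities $f^{+}f^{-}=0$ and $\mathrm{supp}(D_r(f))=D(f^{+})$ are exactly the paper's $(f-|f|)(f+|f|)=0$ and $\mathrm{supp}(D_r(f))=D(f+|f|)$, and both proofs hinge on the fact that pointwise non-negative definable continuous functions are squares. One caveat: the closure of $\alpha_{\mathfrak p}$ under addition is not quite routine --- it requires convexity of prime ideals of $\mathcal{C}^{\mathcal{R}}(X)$ (if $0\le a\le b$ pointwise and $b\in\mathfrak p$, then $a\in\mathfrak p$, via writing $a^{2}=bh$ with $h$ continuous and definable) --- and this is precisely the content that the paper outsources to the assertion, via \cite[Prop.\,7.1.8]{BCR98}, that every prime of $\mathcal{C}^{\mathcal{R}}(X)$ is real.
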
	

%Due to the existence of a homeomorphism between $\mathrm{Sper}(\mathcal{C}^{\mathcal{R}}(X))$ and $\mathrm{Spec}(\mathcal{C}^{\mathcal{R}}(X))$, we will not use the real spectrum in this paper and we will focus in the (Zariski) spectrum of $\mathcal{C}^{\mathcal{R}}(X)$. In particular, we do not include here the basic definitions concerning the real spectrum (see \cite[Ch.\, 7]{BCR98}). However, let us say a few words concerning the existence of the homeomorphism. 

\begin{proof}The support map is continuous and its image is the set of real prime ideals of $\mathcal{C}^{\mathcal{R}}(X)$ (see \cite[Prop.\,7.1.8]{BCR98}). Each $\mathfrak{p}\in \mathrm{Spec}(\mathcal{C}^{\mathcal{R}}(X))$ is real prime because the field of fractions of $\mathcal{C}^{\mathcal{R}}(X)/\mathfrak{p}$ is a real field, so $\mathrm{supp}$ is surjective. Now, the field of fractions of $\mathcal{C}^{\mathcal{R}}(X)/\mathfrak{p}$ admits a unique ordering and therefore the map $\mathrm{supp}$ is injective. Indeed, given $f\in \mathcal{C}^{\mathcal{R}}(X)$ we have $(f-|f|)(f+|f|)=0$, so either $f-|f|\in \mathfrak{p}$ or $f+|f|\in \mathfrak{p}$, where $|f|$ denotes the absolute value of $f$. Since $|f|=(\sqrt{|f|})^2$, the ordering of $\mathcal{C}^{\mathcal{R}}(X)/\mathfrak p$ is completely determined by $\mathfrak{p}$.	
	
Finally, we note  that $\text{supp}(D_r(f))=D(f+|f|)$ for every $f\in \mathcal{C}^{\mathcal{R}}(X)$,
which yields that the inverse of the support map is continuous. 
\end{proof}

The real spectrum of a ring is a normal spectral space by \cite[Rmk.\,7.1.17, Prop.\,7.1.25]{BCR98}, and therefore we deduce the following.
\begin{fact}
	The space $\mathrm{Spec}(\mathcal C^\R(X))$ is a normal spectral space.
\end{fact}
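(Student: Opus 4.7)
The plan is to deduce this immediately from the two facts already established just above: first, that $\mathrm{Sper}(\mathcal{C}^{\mathcal{R}}(X))$ is a normal spectral space (by \cite[Rmk.\,7.1.17, Prop.\,7.1.25]{BCR98}), and second, that the support map $\mathrm{supp}:\mathrm{Sper}(\mathcal{C}^{\mathcal{R}}(X))\to \mathrm{Spec}(\mathcal{C}^{\mathcal{R}}(X))$ is a homeomorphism. Since being a normal spectral space is a purely topological condition (quasi-compactness, $\mathrm{T}_0$, existence of a basis of quasi-compact open subsets closed under finite intersection, sobriety, plus the separation axiom asking that disjoint closed sets be separated by disjoint open sets), it is preserved by homeomorphism. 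Hence I would just transport the structure along $\mathrm{supp}$.

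More concretely, I would write one line verifying that under $\mathrm{supp}$ each of the four defining conditions (S1)--(S4) of Definition \ref{def:spectralspace} transfers from $\mathrm{Sper}(\mathcal{C}^{\mathcal{R}}(X))$ to $\mathrm{Spec}(\mathcal{C}^{\mathcal{R}}(X))$, together with normality. If one wants to be slightly more explicit about the basis, the identity $\mathrm{supp}(D_r(f))=D(f+|f|)$ from the previous proof shows that the standard basis $\{D(g)\}_{g\in \mathcal{C}^{\mathcal{R}}(X)}$ of the Zariski topology is the image, under $\mathrm{supp}$, of the basis $\{D_r(f)\}_{f\in\mathcal{C}^{\mathcal{R}}(X)}$ of the spectral topology on the real spectrum, which makes the transfer completely transparent.

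There is no real obstacle here: the statement is a formal corollary of the preceding two facts, and the only thing worth recording is the observation that all the axioms in Definition \ref{def:spectralspace} together with normality are topological invariants. I would therefore keep the proof to essentially one sentence, citing the previous fact and Definition \ref{def:spectralspace}.
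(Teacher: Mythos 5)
Your proposal is correct, and it is in fact exactly the deduction the paper itself makes in the sentence introducing this Fact: normality and spectrality of $\mathrm{Sper}(\mathcal{C}^{\mathcal{R}}(X))$ are quoted from \cite[Rmk.\,7.1.17, Prop.\,7.1.25]{BCR98} and transported along the homeomorphism $\mathrm{supp}$ established just before. Since all of (S1)--(S4) and normality are topological invariants, there is nothing to check beyond the transfer, and your remark that $\mathrm{supp}(D_r(f))=D(f+|f|)$ identifies the two bases makes this transparent.

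The paper's displayed proof, however, deliberately takes a different route for the normality part: it gives a self-contained argument ``without using the machinery of the real spectrum.'' Given two disjoint closed sets, it reduces by quasi-compactness to basic closed sets $V(f_1)$ and $V(f_2)$, observes that the zero sets $Z(f_1)$ and $Z(f_2)$ are disjoint closed definable subsets of $X$, separates them by disjoint definable open sets $U_1, U_2$ using definable normality of $X$ (assumption (A2)), and then separates $V(f_1)$ and $V(f_2)$ by $D(g_1)$ and $D(g_2)$ where $g_i(x)=\mathrm{dist}(x, X\setminus U_i)$. What that buys is a concrete explanation of \emph{why} the spectrum is normal -- it is inherited from definable normality of $X$ via distance functions, in complete parallel with the proof of normality of $\Spec_X(\M)$ in Lemma \ref{lemanormal} -- whereas your one-liner buys brevity at the cost of outsourcing the normality of the real spectrum to the literature. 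Both are valid; if you adopt your version, you should still note (or cite) that the Zariski spectrum of a commutative ring is a spectral space, which your transfer handles automatically via the real spectrum but which the paper's proof environment leaves implicit.
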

\begin{proof}For the sake of the reader, we include the proof of normality without using the machinery of the real spectrum. Let $C_1$ and $C_2$ be two disjoint closed subsets of  $\mathrm{Spec}(\mathcal C^\R(X))$, which we may assume to be basic closed subsets, by quasi-compactness of $\mathrm{Spec}(\mathcal C^\R(X))$. Hence, there are two continuous definable functions $f_1,f_2\in \mathcal C^\R(X)$ such that $C_i=D(f_i)^c$ for $i=1,2$. Consider the closed definable sets  $Z(f_1)=f_1^{-1}(\{0\})$ and $Z(f_2)=f_2^{-1}(\{0\})$. 
	
As $X$ is definably normal, there are two open definable subsets $U_1\supset Z(f_1)$ and $U_2\supset Z(f_2)$ such that $U_1\cap U_2=\emptyset$. Let $g_1,g_2\in \mathcal C^\R(X)$ be the functions defined by $g_i(x)=\text{dist}(x,X\setminus U_i)$ for each $x\in X$ and $i=1,2$. Note that $Z(g_1)=X\setminus U_1$ and $Z(g_2)=X\setminus U_2$.
	 We claim that $C_i\subset D(g_i)$. Otherwise, for $\mathfrak p\in C_i\setminus D(g_i)$ we have that $g_i,f_i\in \mathfrak p$. As 
	\[
	Z(f_i^2+g_i^2)=Z(f_i)\cap Z(g_i)\subset U_i\cap (X\setminus U_i)=\emptyset,
	\]the function $f_i^2+g_i^2\in \mathfrak p$ is a unit, a contradiction. Hence $C_i\subset D(g_i)$ for $i=1,2$. It remains to show that $D(g_1)\cap D(g_2)=\emptyset$. Indeed, since
	\[
	Z(g_1\cdot g_2) = Z(g_1)\cup Z(g_2) = (X\setminus U_1)\cup (X\setminus U_2) = X\setminus (U_1\cap U_2) =X,
	\]
	the function $g_1\cdot g_2$ is the constant function $x\mapsto 0$ and therefore $g_1\cdot g_2\in \mathfrak p$. Hence, as $\mathfrak p$ is prime, either $\mathfrak p\notin D(g_1)$ or $\mathfrak p\notin D(g_2)$.
\end{proof}

Now, consider the map
\[
\iota: \Spec_X(\mathcal{R})  \to \mathrm{Spec}(\mathcal{C}^{\mathcal{R}}(X)) , \ 
    p  \mapsto  \{ f\in \mathcal C^\R(X) \ | \  Z(f)\in p \},
\] 
where $Z(f):=f^{-1}(\{0\})$ is a definable closed set because $f$ is continuous. In fact, any definable closed set $Z$ of $X$ is of this form by considering the map $f:X\to \mathbb R$ given by $f(x)=\mathrm{dist}(x,Z)$.
\begin{prop}\label{P:iota-cont}
The map $\iota: \Spec_X(\mathcal{R})  \to \mathrm{Spec}(\mathcal{C}^{\mathcal{R}}(X)) $ is injective and spectral, that is, it is continuous and the inverse image of a quasi-compact open is quasi-compact.
\end{prop}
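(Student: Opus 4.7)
The plan is to verify three things in order: that $\iota(p)$ is actually a prime ideal (so the map is well defined), that $\iota$ is injective, and that $\iota$ is spectral in the sense stated.

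\medskip

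For well-definedness, I would use the fact that $\iota(p)$ is naturally identified with the collection of continuous definable functions whose zero set belongs to $p$. Since $Z(f)\cap Z(g)\subset Z(f+g)$ and $p$ is closed under finite intersections, $\iota(p)$ is closed under addition; since $Z(f)\subset Z(fg)$ for every $g\in \mathcal{C}^\R(X)$, it absorbs products; and $1\notin\iota(p)$ because $Z(1)=\emptyset\notin p$. Primality follows because $Z(fg)=Z(f)\cup Z(g)$, and since $p$ is a complete type (an ultrafilter), at least one of $Z(f)$ or $Z(g)$ lies in $p$.

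\medskip

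For injectivity, I would invoke the fact (from the remark following Lemma \ref{lemanormal} on the constructible topology, together with (A1)) that distinct types $p\neq q$ in $\Spec_X(\R)$ are separated by some closed definable subset $C\subset X$. The key geometric input for the o-minimal setting is that every closed definable $C\subset X$ is of the form $Z(f)$, namely for $f(x)=\mathrm{dist}(x,C)$, which is continuous and definable. Hence if $C\in p\setminus q$ (say) then the corresponding $f$ lies in $\iota(p)$ but not in $\iota(q)$.

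\medskip

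For the spectral property, I would simply compute the preimage of a basic open. For $f\in \mathcal{C}^\R(X)$,
\[
\iota^{-1}(D(f))=\{p\in \Spec_X(\R): Z(f)\notin p\}=[X\setminus Z(f)],
\]
which is a basic open set in $\Spec_X(\R)$ because $X\setminus Z(f)$ is open definable; this gives continuity, and moreover shows that the preimage of each basic open (quasi-compact) in $\mathrm{Spec}(\mathcal{C}^\R(X))$ is a basic open (quasi-compact) in $\Spec_X(\R)$. An arbitrary quasi-compact open in $\mathrm{Spec}(\mathcal{C}^\R(X))$ is a finite union of sets $D(f_i)$, so its preimage is a finite union of quasi-compact opens and hence quasi-compact. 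I do not anticipate serious obstacles here; the only subtle point is the use of the o-minimal distance function to realize every closed definable set as a zero set of a continuous definable function, which is what makes $\iota$ injective (and will be needed again to identify the image of $\iota$ with the whole prime spectrum when one upgrades to a homeomorphism on closed points in the subsequent results).
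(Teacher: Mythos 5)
Your proof is correct and follows essentially the same route as the paper: injectivity via separating distinct types by a closed definable set (A1) realized as a zero set $Z(f)$ with $f=\mathrm{dist}(\cdot,C)$, and the spectral property via the computation $\iota^{-1}(D(f))=[X\setminus Z(f)]$ together with quasi-compactness of sets of the form $[V]$. The only addition is your explicit check that $\iota(p)$ is a prime ideal, which the paper leaves implicit; that verification is correct and harmless.
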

\begin{proof}
To prove that it is continuous, consider an arbitrary basic open set $D(f)$ given by some $f\in \mathcal{C}^{\mathcal{R}}(X)$. Then
\[
\iota^{-1}(D(f))=\{p \in \Spec_X(\R) \ | \ \iota(p)\in D(f) \}=\{p  \in \Spec_X(\R) \ | \  Z(f)\notin p\}=[X\setminus Z(f)]
\]
and $\iota$ is continuous because the set $Z(f)$ is a closed subset of $X$. To verify that the inverse image of a quasi-compact open set $U\subset \mathrm{Spec}(\mathcal C^\R(X))$ is quasi-compact, note that $U$ is a finite union of sets of the form $D(f_i)$ for $i\in I$, where $I$ finite and each $f_i\in \mathcal C^\R(X)$. Thus,  
\[
\iota^{-1}(U)=\bigcup_{i\in I} \iota^{-1}(D(f_i)) = \bigcup_{i\in I} [X\setminus Z(f_i)]
\]
is quasi-compact, as so is each set of the form $[V]$ for $V\subset X$ definable since $S_X(\R)\to \Spec_X(\R)$ is continuous. 

Finally, by \cite[Cor.\,2.6 and Prop.\,3.1]{mT99} the map $\iota$ is injective. We provide also a direct proof. Consider $p,q\in \Spec_X(\mathcal{R})$ with $p\neq q$ such that $\iota(p)=\iota(q)$. By assumption (A1), there exists a definable closed subset $Z$ of $X$ such that $Z\in p$ and $Z\notin q$. Since $Z$ is closed, there is some $f\in \mathcal{C}^{\mathcal{R}}(X)$ such that $Z(f)=Z$, so $f\in \iota(p)$ but $f\notin \iota(q)$, a contradiction.
\end{proof}

In general, the map $\iota$ is not surjective even when $\R$ has only the field structure.

\begin{example} 
The following two examples correspond to \cite[Rmk.\,1.2]{FG14} (cf. \cite[Rmk.\,2.6.5]{BCR98}) and \cite[pp. 1]{mT99}, respectively. In both cases, one finds two functions $f,g\in \mathcal C^\R(X)$ with $Z(f)=Z(g)$ and a prime ideal $\mathfrak p\in \mathrm{Spec}(\mathcal C^\R(X))$ containing one but not the other, showing that $\iota$ cannot be surjective.
\begin{enumerate}[(i)]
    \item Let $\R$ be the field of real numbers and $X=\{(x,y)\in \mathbb R\times \mathbb R \ | \ y>0\}\cup\{(0,0)\}$. Consider the semialgebraic functions $f(x,y)=y$ and $g(x,y)=x^2+y^2$. Their zero set agree on $X$ but for each $k\in\mathbb N$ the limit at $(0,0)$ of the semialgebraic function $h_k=g^k/f$ does not exist. Therefore the prime ideal 
    \[\mathfrak p = \left\{ h\in \mathcal C^\R(X) \ \big| \ \exists \varepsilon>0, \forall t\in [0,\varepsilon) \, \lim_{x\to t} h(x,0) = 0\right\} \] 
    contains $f$ but not $g$.
    \item Let $\R$ be the field of real numbers with the exponential map $\exp$ and $X=\mathbb R$. Consider the definable functions $f(x)=x$ and $g(x)=\exp(-1/x^2)$ for $x>0$ and $g(0)=0$. Their zero set agree on $X$ but for each $k\in\mathbb N$ the limit at $0$ of the definable function $h_k=g^k/f$ does not exist. Therefore the prime ideal $\mathfrak p$ given by
     \[ \left\{ h\in \mathcal C^\R(X) \ \big| \ \forall k\ge 1, \exists \varepsilon_k>0, \exists c_k> 1, \forall x\in [0,\varepsilon_k) \ |h(x)|<c_k \cdot x^k \right\} \] 
     contains $g$ but not $f$.
\end{enumerate}
\end{example}
However, for locally closed semialgebraic sets Lojasiewicz's inequality yields the surjectivity of $\iota$. The following result already appears in \cite[Thm.\,8.1]{mT99}. We include a (different) proof for the sake of completeness.

\begin{fact}
Suppose that $\R$ is a polynomially bounded expansion of a real closed field. Assume that $X$ is a locally closed and definable. Then the spectral map $\iota: \Spec_X(\mathcal{R})  \to \mathrm{Spec}(\mathcal{C}^{\mathcal{R}}(X)) $ is surjective and in particular a homeomorphism. 
\end{fact}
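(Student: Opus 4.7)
The plan is to establish surjectivity of $\iota$; combined with Proposition~\ref{P:iota-cont}, which already shows $\iota$ is an injective spectral map, this will make $\iota$ a continuous spectral bijection between spectral spaces, hence a homeomorphism. The essential external input is the \emph{Lojasiewicz inequality} for continuous definable functions on locally closed definable sets, available in polynomially bounded o-minimal expansions of real closed fields: if $f,g\in\mathcal{C}^{\mathcal{R}}(X)$ satisfy $Z(f)\subset Z(g)$, then there exist $N\ge 1$ and $h\in\mathcal{C}^{\mathcal{R}}(X)$ with $g^N = h\cdot f$. In particular, whenever $\mathfrak{p}$ is prime and $f\in\mathfrak{p}$, every $g$ with $Z(f)\subset Z(g)$ also lies in $\mathfrak{p}$.

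Given $\mathfrak{p}\in\mathrm{Spec}(\mathcal{C}^{\mathcal{R}}(X))$, I would assemble a preimage under $\iota$ as follows. Set $\mathcal{F}_\mathfrak{p} := \{Z(f) : f\in\mathfrak{p}\}$, a family of closed definable subsets of $X$. Using the above consequence of Lojasiewicz, $\mathcal{F}_\mathfrak{p}$ is upward-closed among closed definable subsets; it is closed under finite intersections because $Z(f)\cap Z(g) = Z(f^2+g^2)$ and $f^2+g^2\in\mathfrak{p}$ when $f,g\in\mathfrak{p}$; and from $Z(fg)=Z(f)\cup Z(g)$ together with the primeness of $\mathfrak{p}$ and upward closure, it is \emph{prime}, in the sense that whenever a union of two closed definable sets lies in $\mathcal{F}_\mathfrak{p}$, one of the two summands does too. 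Now consider
\[
\Sigma(x) := \{C(x) : C\in\mathcal{F}_\mathfrak{p}\}\cup\{\neg C(x) : C\subset X \text{ closed definable},\ C\notin\mathcal{F}_\mathfrak{p}\}.
\]
By model-theoretic compactness, $\Sigma$ is consistent: an inconsistency would yield $C_1,\ldots,C_k\in\mathcal{F}_\mathfrak{p}$ and closed $D_1,\ldots,D_m\notin\mathcal{F}_\mathfrak{p}$ with $C:=C_1\cap\cdots\cap C_k\subset D_1\cup\cdots\cup D_m$, so $C=\bigcup_j(C\cap D_j)$ and the prime-filter property forces some $C\cap D_j\in\mathcal{F}_\mathfrak{p}$, contradicting $D_j\notin\mathcal{F}_\mathfrak{p}$ via upward closure. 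Any completion $p\in S_X(\mathcal{R})$ of $\Sigma$ then satisfies $\iota(p)=\mathfrak{p}$, since for $f\in\mathcal{C}^{\mathcal{R}}(X)$ we have $f\in\iota(p) \Leftrightarrow Z(f)\in p \Leftrightarrow Z(f)\in\mathcal{F}_\mathfrak{p} \Leftrightarrow f\in\mathfrak{p}$, the last equivalence being exactly the well-definedness built into the construction of $\mathcal{F}_\mathfrak{p}$.

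With surjectivity secured, $\iota$ is a continuous spectral bijection; since it preserves quasi-compact opens in both directions and every open is a union of quasi-compact opens (property (S2) of Definition~\ref{def:spectralspace}), it is also open, hence a homeomorphism. The main obstacle I anticipate is the verification of the Lojasiewicz-type identity $g^N = hf$ in $\mathcal{C}^{\mathcal{R}}(X)$ for $X$ merely locally closed (rather than definably compact): this is precisely the point at which polynomial boundedness and local closedness of $X$ enter, and without them the whole argument collapses, as the examples preceding the statement already illustrate. Once this identity is available, the prime-filter-to-type bookkeeping and the final identification $\iota(p)=\mathfrak{p}$ are essentially formal.
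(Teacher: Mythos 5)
Your proof is correct and follows essentially the same route as the paper: both build the partial type $\Sigma$ from the zero sets of elements of $\mathfrak p$ together with the complements of the remaining closed definable sets, prove consistency by reducing (via squares and products) to a single inclusion $Z(f)\subset Z(g)$ and invoking the Lojasiewicz inequality for polynomially bounded structures on locally closed sets, and then complete $\Sigma$ to the desired preimage --- your prime-filter packaging of $\mathcal F_{\mathfrak p}$ is just tidier bookkeeping for the same computation. One small caution on the last step: a bijective spectral map need not be a homeomorphism in general, so the assertion that $\iota$ ``preserves quasi-compact opens in both directions'' requires the explicit (easy) observation that, by bijectivity and the formula $\iota^{-1}(D(f))=[X\setminus Z(f)]$ applied with $f=\mathrm{dist}(\cdot,X\setminus V)$, one has $\iota([V])=D(f)$ for every open definable $V\subset X$.
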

\begin{proof}
Let $\mathfrak{p}\in \mathrm{Spec}(\mathcal{C}^{\mathcal{R}}(X))$, and consider the following collection of definable sets
$$\Sigma=\{Z(f) \ | \ f\in \mathfrak{p}\}\cup \{X\setminus Z(f) \ | \ f\notin \mathfrak{p}\}.$$
Let us show that $\Sigma$ is consistent. Assume, towards a contradiction, that 
\[
Z(f_1)\cap \cdots \cap Z(f_n)\cap \big(X\setminus Z(g_1)\big)\cap \cdots\cap  \big(X\setminus Z(g_m)\big)=\emptyset
\] 
for some $f_1,\ldots,f_n\in \mathfrak p$ and some $g_1\ldots,g_m\in \mathcal{C}^{\mathcal{R}}(X)$. Define $f:=f^2_1+\cdots+f^2_n$ and $g:=g_1\cdots g_m$. Thus, $f\in \mathfrak{p}$ and $g\notin \mathfrak{p}$, because $\mathfrak p$ is prime. Note that 
\[
Z(f) = \bigcap_{i=1}^n Z(f_i) \subset \bigcup_{j=1}^m Z(g_j) = Z(g).
\]
Therefore, by \cite[Thm.\,7.3]{mT99} (cf. \cite[Thm.\,2.6.6]{BCR98}, Lojasiewicz's inequality) there is some $k\in \mathbb{N}$ and some $h\in \mathcal{C}^{\mathcal{R}}(X)$ such that $g^k=hf$. In particular, $g\in\mathfrak{p}$, which is a contradiction. 

Hence, take a complete type $p$ extending $\Sigma$. It is obvious that $\mathfrak{p}\subset \iota(p)$, since $\Sigma \subset p$. On the other hand, given $f\in \iota(p)$ we have by definition that $Z(f)\in p$, so $X\setminus Z(f)\notin \Sigma \subset p$. Hence,  $f\in \mathfrak{p}$, yielding that $\iota(p)=\mathfrak p$.
\end{proof}

As shown in \cite[Props.\,2 and 3]{CC83}, the normality of $\mathrm{Spec}(\mathcal C^\R(X))$ implies that the set of maximal ideals $\mathrm{Max}(\mathcal{C}^{\mathcal{R}}(X))$ is quasi-compact and Hausdorff. Furthermore,  there is a continuous retraction map  $r_{\mathrm{Spec}}:\mathrm{Spec}(\mathcal{C}^{\mathcal{R}}(X))\rightarrow \mathrm{Max}(\mathcal{C}^{\mathcal{R}}(X))$.
\begin{theorem}
The map $\iota|_{ \beta_X(\mathcal{R})}: \beta_X(\mathcal{R})\rightarrow \mathrm{Max}(\mathcal{C}^{\mathcal{R}}(X))$ is a homeomorphism and the following diagram is commutative:
\begin{equation}\label{eq:diagram}
\begin{tikzcd}
  \Spec_X(\R) \arrow{r}{\iota} \arrow{d}{r} & \mathrm{Spec}(\mathcal C^\R(X)) \arrow{d}{r_\mathrm{Spec}} \\
           \beta_X(\R) \arrow{r}{\iota|_{ \beta_X(\mathcal{R})}} &   \mathrm{Max}(\mathcal{C}^{\mathcal{R}}(X))
\end{tikzcd}
\end{equation}
\end{theorem}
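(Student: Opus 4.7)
The plan relies on the following key observation, which I would establish first: the injective spectral map $\iota$ both preserves and reflects specialization, i.e., for $p,q \in \Spec_X(\R)$ one has $\iota(p) \subset \iota(q)$ if and only if $q \in \mathrm{cl}^{\rm t}(p)$. This is because every closed definable $C \subset X$ arises as $Z(f)$ for some $f \in \mathcal C^{\mathcal R}(X)$ via the distance function, so the ideal inclusion $\iota(p) \subset \iota(q)$ is precisely the condition that every closed definable set in $p$ lies in $q$, which by complementation is $q \in \mathrm{cl}^{\rm t}(p)$.

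Using this, I would next show that $\iota$ maps $\beta_X(\R)$ into $\mathrm{Max}(\mathcal C^{\mathcal R}(X))$. Given $m \in \beta_X(\R)$, let $\mathfrak M := r_{\mathrm{Spec}}(\iota(m))$ be the unique maximal ideal above $\iota(m)$ provided by normality. The family $\{Z(f) : f \in \mathfrak M\}$ has the finite intersection property, since $Z(f_1)\cap\cdots\cap Z(f_n) = Z(f_1^2+\cdots+f_n^2)$ is nonempty (otherwise this sum of squares would be a unit in $\mathfrak M$). Extending this family to a complete type $p \in \Spec_X(\R)$ yields $\iota(p) \supset \mathfrak M$, and since $\iota(p)$ is a proper prime ideal (it is prime because $Z(fg) = Z(f) \cup Z(g)$, and proper since $Z(1) = \emptyset \notin p$) we get $\iota(p) = \mathfrak M$. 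Hence $\iota(m) \subset \iota(p)$, which by the key observation gives $p \in \mathrm{cl}^{\rm t}(m)$; closedness of $m$ forces $p = m$, so $\iota(m) = \mathfrak M$ is maximal.

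Surjectivity onto $\mathrm{Max}(\mathcal C^{\mathcal R}(X))$ follows from the very same construction: starting from an arbitrary $\mathfrak M \in \mathrm{Max}(\mathcal C^{\mathcal R}(X))$, the type $p$ constructed above and the closed type $m := r(p) \in \beta_X(\R)$ satisfy $\iota(m) \supset \iota(p) = \mathfrak M$, and the previous step forces $\iota(m) = \mathfrak M$. Injectivity of $\iota|_{\beta_X(\R)}$ is inherited from $\iota$ (Proposition \ref{P:iota-cont}), and since both $\beta_X(\R)$ and $\mathrm{Max}(\mathcal C^{\mathcal R}(X))$ are quasi-compact and Hausdorff, this continuous bijection is automatically a homeomorphism. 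For the commutativity of the diagram, given $p \in \Spec_X(\R)$ the type $m := r(p)$ satisfies $\iota(p) \subset \iota(m) \in \mathrm{Max}$; any maximal $\mathfrak M \supset \iota(p)$ equals $\iota(m')$ for some closed $m'$ by surjectivity, and $\iota(p) \subset \iota(m')$ forces $m' \in \mathrm{cl}^{\rm t}(p) \cap \beta_X(\R) = \{m\}$, whence $\mathfrak M = \iota(m)$, giving $r_{\mathrm{Spec}}(\iota(p)) = \iota(m) = \iota(r(p))$.

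I expect the heart of the proof to be the maximality step for $\iota(m)$: the FIP extension of $\{Z(f) : f \in \mathfrak M\}$ to a complete type, together with the specialization-reflecting property of $\iota$, is exactly the ingredient that lets one identify the extended type with $m$ itself. Everything else—commutativity, surjectivity, and the homeomorphism conclusion—unfolds formally from that step.
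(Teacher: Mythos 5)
Your proof is correct and follows essentially the same route as the paper: the FIP extension of $\{Z(f): f\in\mathfrak M\}$ to a complete type, identified with the given closed point via specialization and then a compact-Hausdorff bijection argument, is exactly the paper's proof. The only difference is organizational: you isolate the fact that $\iota$ preserves and reflects specialization as an explicit lemma, whereas the paper establishes the same implication inline by contradiction, using a basic open $[U]$ and an $f$ with $Z(f)=X\setminus U$.
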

\begin{proof}
We first show that for every $\mathfrak{m}\in \mathrm{Max}(\mathcal{C}^{\mathcal{R}}(X))$ there exists some $p\in \beta_X(\mathcal{R})$ such that $\iota(p)=\mathfrak{m}$. Indeed, let 
$$\Sigma=\{Z(f) \ | \ f\in \mathfrak{m}\}.$$
Similarly as we argued above $\Sigma$ is consistent. Otherwise, let $f_1,\ldots,f_n\in\mathfrak m$ be such that 
\[
Z(f_1)\cap\ldots\cap Z(f_n) = \emptyset
\]
and set $f=f_1^2+\dots +f_n^2$. It follows that $f\in\mathfrak m$ but $Z(f)=\emptyset$, which is a contradiction. 
Now, let $p\in S_X(\mathcal{R})$ be a complete type extending $\Sigma$. We claim that $\iota(p)=\mathfrak{m}$. Indeed, by definition $\iota(p)$ is a prime ideal that contains $\mathfrak{m}$, so $\iota(p)=\mathfrak m$ by maximality of $\mathfrak m$. Now, since the map $\iota$ is injective and continuous by Proposition \ref{P:iota-cont}, the set $\iota^{-1}(\{\mathfrak{m}\})=\{p\}$ is closed, hence $p\in \beta_X(\mathcal{R})$, as required.

Now we claim that for every $q\in \beta_X(\mathcal{R})$ we have $\iota(q)\in \mathrm{Max}(\mathcal{C}^{\mathcal{R}}(X))$. Indeed, the prime ideal $\iota(q)$ is contained in a maximal ideal $\mathfrak{m}$. Moreover, the previous paragraph and Proposition \ref{P:iota-cont} yield that there is a unique $p\in \beta_X(\mathcal{R})$ such that $\iota(p)=\mathfrak{m}$. We claim that $p\in \mathrm{cl}^{\rm t}(q)$. Otherwise, suppose that there is some definable open set $U$ such that $p\in [U]$ and $q\notin [U]$. Thus, there is some $f\in\mathcal{C}^{\mathcal{R}}(X)$ with $Z(f)=X\setminus U$, so that $Z(f)\notin p$. In particular, by definition $f\notin \iota(p)$, so $f\notin \iota(q)$, because $\iota(q)\subset \mathfrak{m}=\iota(p)$. It follows that $Z(f)=X\setminus U\notin q$, so $U\in q$, which is a contradiction. Therefore, we have proved that $p\in \mathrm{cl}^{\rm t}(q)$ and since $q$ is closed point, we get that $p=q$ and hence $\iota(q)=\iota(p)=\mathfrak{m}$ is maximal, as required.

Altogether, we have proved that
\[\iota|_{ \beta_X(\mathcal{R})}: \beta_X(\mathcal{R})\rightarrow \mathrm{Max}(\mathcal{C}^{\mathcal{R}}(X))\]
is a continuous bijection and so a homeomorphism, because as both spaces are quasi-compact and Hausdorff the map $\iota|_{ \beta_X(\mathcal{R})}$ is closed.

Finally, observe that given a type $q\in \Spec_X(\R)$, the ideal $r_\mathrm{Spec}(\iota(q))$ is the unique maximal ideal which contains $\iota(q)$. Since $r(q)\in \mathrm{cl}^{\rm t}(q)$, it follows that $\iota(q) \subset \iota (r(q))$, so  
\[
\iota(r(q))=r_\mathrm{Spec}(\iota(q))
\]
and diagram \eqref{eq:diagram} is commutative as required.
\end{proof}

We finish this section noting that $\Spec_X(\mathcal{R})$ is locally connected, similarly as it happens with the spectrum of the ring of continuous semialgebraic functions (see \cite[Cor.\,3.10]{BFG22}).

\begin{lemma}\label{spectralLocCon}The topological space $\Spec_X(\mathcal{R})$ is locally connected.	
\end{lemma}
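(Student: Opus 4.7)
The plan is to exhibit an explicit basis of connected open sets at every point. In o-minimal expansions of real closed fields every open definable subset of $X$ decomposes as a finite disjoint union of its (open) definably connected components, so any basic open set $[U]$ with $U\subset X$ open definable splits as $[U]=[U_1]\sqcup\cdots\sqcup[U_k]$, where the $U_i$ are the definably connected components of $U$. Hence the sets of the form $[U]$ with $U$ open definable and definably connected form a basis of the spectral topology. It therefore suffices to show that each such $[U]$ is connected, from which local connectedness follows immediately.

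The argument for this connectedness claim will use the characterization of quasi-compact open subsets of $\Spec_X(\R)$ recorded in the remark after Fact \ref{F:Spectral}. Suppose, aiming at a contradiction, that for some definably connected open definable $U\subset X$ we can write $[U]=A\sqcup B$ with $A,B$ nonempty and both open in $[U]$. Because $[U]$ itself is open in $\Spec_X(\R)$, the subsets $A$ and $B$ are open in $\Spec_X(\R)$ as well. Since $[U]$ is quasi-compact (being the image of the Stone-compact set $[U]\subset S_X(\R)$ under the continuous identity map into $\Spec_X(\R)$), and each of $A,B$ is closed in $[U]$ as complement of the other, both $A$ and $B$ are quasi-compact open subsets of $\Spec_X(\R)$.

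By the already mentioned characterization, $A=[V_1]$ and $B=[V_2]$ for some open definable sets $V_1,V_2\subset X$. The equalities $[V_1]\cap[V_2]=[V_1\cap V_2]=\emptyset$ and $[V_1]\cup[V_2]=[V_1\cup V_2]=[U]$, together with the density of realized types in $\Spec_X(\R)$ (Lemma \ref{L:BetaClosed}), force $V_1\cap V_2=\emptyset$ and $V_1\cup V_2=U$ at the level of definable sets. As $U$ is definably connected, one of $V_1,V_2$ must be empty, contradicting the nonemptiness of $A$ and $B$. I do not anticipate any significant obstacle: the only nontrivial ingredient beyond the earlier remarks is o-minimal cell decomposition to guarantee the finite definably connected component decomposition of open definable sets, and that is a standard fact in the o-minimal setting.
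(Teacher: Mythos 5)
Your proof is correct and follows essentially the same route as the paper's: reduce to basic opens $[U]$ with $U$ definably connected via the finite decomposition into open definably connected components, then show $[U]$ is connected by a compactness argument that converts a putative clopen partition of $[U]$ into a definable clopen partition of $U$. The only cosmetic difference is that you invoke the characterization of quasi-compact opens as sets $[V]$ directly, whereas the paper rederives it inline by writing each piece as a union of basic opens and extracting a finite subcover.
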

\begin{proof} We first note that if $U$ is a definable open subset of $X$ which is definably connected, then $[U]$ is connected in $\Spec_X(\mathcal{R})$. Otherwise, there are open disjoint non-empty subsets $C_1$ and $C_2$ of $[U]$ such that $[U]=C_1\cup C_2$. Since $C_1$ and $C_2$ are open in $\Spec_X(\mathcal{R})$, we have $C_1=\bigcup_{i\in I}[V_i]$ and $C_2=\bigcup_{j\in J}[W_j]$ for $V_i$ and $W_j$ open definable subsets of $X$. The space $S_X(\mathcal{R})$ is quasi-compact with respect to the Stone topology, so we can assume that $I$ and $J$ are finite. Thus, the sets $V:=\bigcup_{i\in I}V_i$ and $W:=\bigcup_{j\in J}W_j$  are definable, open and closed disjoint non-empty subsets of $U$ such that $U=V\cup W$, which is a contradiction. 
	
Let $p\in\Spec_X(\mathcal{R})$ and let $\widetilde{U}$ be an open neighborhood of $p$ in $\Spec_X(\mathcal{R})$. Then, there is an open definable subset $U$ of $X$ such that $p\in [U]\subset \widetilde{U}$. By \cite[Prop.\,3.2.18]{vdD98} the definable set $U$ has finitely many definably connected components which are open. Therefore, we can assume that $U$ is definably connected. In particular, the set $[U]$ is connected for the spectral topology, as required.
\end{proof}

\subsection{The relation with infinitesimal types}\label{subsec:inftypes}
Fix a definable group $G\subset R^n$ in the o-minimal expansion $\R$ of a real closed field. As usual, by \cite{aP88} we can regard $G$ as a topological group. Moreover, by Robson's embedding theorem \cite[Thm.\,10.1.8]{vdD98} we can assume that the group topology coincides with the topology induced by the ambient space $R^n$. We say that a set $X$ is a {\em $G$-set} if $G$ acts on $X$ and is a {\em $G$-space} if in addition $X$ is a topological space and $G$ acts continuously on $X$ ({\it i.e.} the group action $(g,x)\mapsto g\cdot x$ is a continuous map).

\begin{lemma}
There is an action from $G$ on $\beta_G(\R)$ given by the map $(g,p)\mapsto g\cdot p$. In particular, the set $\beta_G(\R)$ is a $G$-set.
\end{lemma}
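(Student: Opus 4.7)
The plan is to lift the left-translation action of $G$ on itself to $S_G(\R)$, and then use that $G$ is a topological group to see that this action preserves $\beta_G(\R)$.

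For each $g \in G \subset R^n$, the translation $L_g \colon G \to G$, $x \mapsto g\cdot x$, is an $R$-definable bijection with $R$-definable inverse $L_{g^{-1}}$, since the group operation on $G$ is $R$-definable and $g$ is a tuple from $R$. Thus pushing forward along $L_g$ sends $R$-definable subsets of $G$ to $R$-definable subsets of $G$ and commutes with Boolean operations, so the rule
\[
g \cdot p := \{ L_g(X) : X \in p\}
\]
sends a complete type $p \in S_G(\R)$ to a complete type $g \cdot p \in S_G(\R)$. The identities $L_{gh} = L_g \circ L_h$ and $L_e = \mathrm{id}_G$ give $(gh)\cdot p = g \cdot (h \cdot p)$ and $e \cdot p = p$, so this is a group action of $G$ on $S_G(\R)$.

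To see that the action restricts to $\beta_G(\R)$, I use that $G$ is a topological group and hence $L_g$ is a homeomorphism of $G$. In particular $L_g$ sends open $R$-definable subsets of $G$ to open $R$-definable subsets, so the induced map on types sends every basic open set $[U]$ of $\Spec_G(\R)$ to the basic open set $[L_g(U)]$. Applying the same observation to $L_{g^{-1}}$, the map $p \mapsto g\cdot p$ is a homeomorphism of $\Spec_G(\R)$, and therefore restricts to a bijection of its set of closed points $\beta_G(\R)$, giving the desired $G$-set structure.

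The statement presents no real obstacle: everything reduces to the observations that $g$ lies in the parameter set $R$ and that left translation by $g$ is simultaneously $R$-definable and a self-homeomorphism of $G$. The only small point worth noting is that $g\cdot p$ is genuinely a complete type and not merely a filter, which is immediate from $L_g$ being a Boolean-operation-preserving bijection.
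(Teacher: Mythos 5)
Your proof is correct and follows the same route as the paper: the paper's one-line argument is precisely that $\pi_g\colon \Spec_G(\R)\to\Spec_G(\R)$, $p\mapsto g\cdot p$, is a homeomorphism (using that $L_g$ is a definable self-homeomorphism of $G$ under the section's standing convention that the group topology agrees with the ambient one) and hence maps closed points to closed points. You simply spell out the details the paper leaves implicit.
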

\begin{proof}
Given $g\in G$, the map $\pi_g:\Spec_G(\R)\to \Spec_G(\R)$ given by $p\mapsto g\cdot p$ is a homeomorphism. Hence, it maps closed points to closed points. 
\end{proof}

Once we have an action from $G$ on $\beta_G(\R)$, it is natural to ask whether this action is continuous. We will prove that the action is continuous if and only if $\beta_G(\R)$ is canonically isomorphic to the compactification $S^\mu_X(\mathcal{R})$ introduced in \cite{PS17}. 

We recall the definition of $S^\mu_X(\mathcal{R})$ and we refer the reader to \cite[A.1]{PS17} for further details  and its basic properties. 
\begin{definition}
The {\em infinitesimal type} $\mu$ of $G$ is the partial type over $R$ consisting of all $\LL_R$-formulas defining open neighborhoods of the identity. 
\end{definition}

Fix a definable $G$-set $X\subset R^n$ and assume that $G$ acts definably on $X$. Given two $\LL_R$-formulas $\varphi(x)$ and $\psi(x)$ such that $\varphi(R)\subset G$ and $\psi(R)\subset X$, we write $\varphi\cdot \psi$ to denote the $\LL_R$-formula 
\[(\varphi\cdot \psi)(x) = \exists u \exists v (\varphi(u)\wedge \psi(v) \wedge x=uv).\] 
For $p\in S_X(\mathcal{R})$ we define $\mu\cdot p$ as the partial type
\[
(\mu\cdot  p)(x)=\bigcup \left\{ (\theta\cdot \psi)(x) \ | \ \theta\in \mu \text{ and } \psi\in p \right\}.
\]
For $p,q\in S_X(\mathcal{R})$ we say that $p\sim_\mu q$ if $\mu\cdot p$ and $\mu\cdot q$ are equivalent partial types ({\it i.e.}  every formula in $\mu\cdot q$ is implied by $\mu\cdot p$, and vice versa). We denote the equivalence class of $p$ by $[p]_\mu$ and the quotient $S_X(\mathcal{R})/\sim_\mu$ by $S^{\mu}_X(\mathcal{R})$. We consider the quotient topology on it. It is a quasi-compact Hausdorff space, and a basis for its topology is given by open sets of the form
\[
U_\varphi^\mu=\{ [p]_\mu \in S^{\mu}_X(\mathcal{R}) \ | \  \mu \cdot p \vdash \varphi\}
\]
for $\varphi(x)$ an $\LL_R$-formula with $\varphi(R)\subset X$, see \cite[Claim A.3]{PS17}. Moreover, the natural projection $S_X(\mathcal{R})\rightarrow S^{\mu}_X(\mathcal{R})$ is continuous and closed \cite[Claim A.2]{PS17}.

\medskip
Henceforth we will work with the natural definable continuous action of $G$ on itself (even though it is possible to extend our results to other $G$-spaces, see Remark \ref{rmkgeneral}). Our purpose is to compare $\beta_G(\R)$ with $S^{\mu}_G(\mathcal{R})$. As first step, we have the following:
\begin{lemma}\label{L:ContSpecMu}
The natural projection $\pi: \Spec_G(\mathcal{R})\rightarrow S^{\mu}_G(\mathcal{R})$ is continuous and preserves the action of $G$. In particular, the map   $\pi|_{\beta_G(\mathcal{R})}$ is also a continuous surjection. 
\end{lemma}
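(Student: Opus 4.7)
The plan is to split into three parts: continuity of $\pi$, well-definedness of the induced $G$-action on $S^\mu_G(\R)$ together with equivariance of $\pi$, and the additional properties for $\pi|_{\beta_G(\R)}$.

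For continuity, I would verify that for each basic open set $U^\mu_\varphi$ of $S^\mu_G(\R)$, the preimage $\pi^{-1}(U^\mu_\varphi)$ is open in $\Spec_G(\R)$. Given $p \in \pi^{-1}(U^\mu_\varphi)$, the definition of $\mu\cdot p$ combined with (model-theoretic) compactness yields some $\theta \in \mu$ and $\psi \in p$ with $\theta(R)\cdot \psi(R) \subset \varphi(R)$. Using continuity of multiplication at the identity of the topological group $G$, choose $\theta', \theta'' \in \mu$ so that $\theta'(R)\cdot \theta''(R) \subset \theta(R)$. Set $U := \theta''(R)\cdot \psi(R)$: this is open definable (a union of open left-translates of $\psi(R)$), and since $e \in \theta''(R)$ it contains $\psi(R)$, so $U \in p$. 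For every $q \in [U]$ the pair $(\theta', U)$ witnesses $\mu\cdot q \vdash \varphi$, since
\[
\theta'(R)\cdot U \;=\; \theta'(R)\cdot \theta''(R)\cdot \psi(R) \;\subset\; \theta(R)\cdot \psi(R) \;\subset\; \varphi(R).
\]
Thus $[U] \subset \pi^{-1}(U^\mu_\varphi)$ is an open neighbourhood of $p$ in $\Spec_G(\R)$.

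For the $G$-action I would first define $g\cdot[p]_\mu := [g\cdot p]_\mu$ and check that this makes sense. This boils down to the conjugation-invariance of $\mu$: since conjugation by $g$ is a self-homeomorphism of $G$ fixing the identity, the partial types $\mu\cdot g$ and $g\cdot\mu$ generate the same filter of neighbourhoods of $g$, so that $\mu\cdot(g\cdot p) \equiv g\cdot(\mu\cdot p)$ as partial types. In particular $p \sim_\mu p'$ forces $g\cdot p \sim_\mu g\cdot p'$, and the identity $\pi(g\cdot p) = g\cdot\pi(p)$ is then immediate from the definition.

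For $\pi|_{\beta_G(\R)}$, continuity is inherited. For surjectivity the key observation is that $S^\mu_G(\R)$ is Hausdorff, so $\{\pi(q)\}$ is closed and hence $\pi^{-1}(\{\pi(q)\})$ is closed in $\Spec_G(\R)$ by continuity of $\pi$; since it contains $q$, it contains $\mathrm{cl}^{\rm t}(q)$, and in particular the unique closed specialization $r(q)\in \beta_G(\R)$. Therefore $\pi\circ r = \pi$. As $\pi\colon \Spec_G(\R)\to S^\mu_G(\R)$ is surjective on underlying sets (being a quotient map on $S_G(\R)$), this yields surjectivity of $\pi|_{\beta_G(\R)}$. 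The only genuinely non-routine ingredient is the $\theta'\cdot\theta''\subset \theta$ trick in the continuity step; everything else is bookkeeping, and the Hausdorffness of $S^\mu_G(\R)$ does all the work for the surjectivity in an essentially automatic way.
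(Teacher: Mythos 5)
Your proof is correct and its core continuity argument coincides with the paper's: the paper invokes $\mu\cdot\mu=\mu$ and compactness to obtain $\theta\cdot\theta\cdot\psi\vdash\varphi$ and takes $[\theta\cdot\psi]$ as the open neighbourhood, which is precisely your $\theta'(R)\cdot\theta''(R)\subset\theta(R)$ splitting, while your treatment of equivariance and of the surjectivity of $\pi|_{\beta_G(\R)}$ (via Hausdorffness of $S^\mu_G(\R)$ and the retraction $r$) correctly fills in steps the paper leaves implicit. One cosmetic repair: $\theta''(R)\cdot\psi(R)$ is open because it is the union of the right-translates $\theta''(R)\cdot v$ of the \emph{open} set $\theta''(R)$ over $v\in\psi(R)$, not because left-translates of $\psi(R)$ are open ($\psi(R)$ need not be open).
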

\begin{proof}
Let $V$ be the preimage of $U_\varphi^\mu$ under $\pi$. Given $q\in V$, since $\mu\cdot q\vdash \varphi$ and $\mu\cdot \mu=\mu$, we can find  by compactness  some $\theta\in \mu$ and $\psi\in q$ such that $(\theta\cdot \theta\cdot \psi)\vdash \varphi$. By assumption, the definable set $(\theta\cdot\psi)(R)$ is open, as so is $\theta(R)$. Hence, $V$ is also open since $q\in [\theta\cdot\psi]\subset V$. We have used that $[\theta\cdot\psi]\subset V$ because $(\theta\cdot \theta\cdot \psi)\vdash \varphi$. 
\end{proof}

One of the main properties of the space $S_G^\mu(\R)$ is that $G$ acts continuoulsy on it \cite[Claim A.5]{PS17}. We now give an equivalent condition for the continuity of the action of $G$ on $\beta_G(\mathcal{R})$.

\begin{prop}\label{lema:contimplieshoemo}The action of $G$ on $\beta_G(\R)$ is continuous if and only if the map $\pi|_{\beta_G(\mathcal{R})}$ is a homeomorphism.
\end{prop}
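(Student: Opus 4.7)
I would prove the biconditional by splitting into the two directions. The easy direction ($\Leftarrow$) is a transport of structure: by Lemma \ref{L:ContSpecMu}, $\pi|_{\beta_G(\R)}$ is $G$-equivariant, and the action of $G$ on $S^\mu_G(\R)$ is continuous by \cite[Claim A.5]{PS17}, so a homeomorphism $\pi|_{\beta_G(\R)}$ would immediately transport that continuous action back to $\beta_G(\R)$.

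For the non-trivial direction ($\Rightarrow$), assuming continuity of the action on $\beta_G(\R)$, the plan is to prove that $\pi|_{\beta_G(\R)}$ is injective: since it is already a continuous surjection between quasi-compact Hausdorff spaces (by Corollary \ref{retractionspec} and \cite[Claim A.3]{PS17}), injectivity upgrades it to a homeomorphism automatically. So I would fix $p,q\in\beta_G(\R)$ with $p\sim_\mu q$ and aim to show that they agree on every open $R$-definable subset of $G$; this will force $p=q$, because $\beta_G(\R)$ is Hausdorff with basis the sets $\beta_G(\R)\cap[U]$ for $U$ open and $R$-definable.

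The first ingredient I would establish is the elementary observation that $\mu\cdot q\subset q$ for every $q\in S_G(\R)$: if $\theta\in\mu$ and $\phi\in q$, then $e\in\theta(R)$ gives $\phi(\bar\R)\subset(\theta\cdot\phi)(\bar\R)$, so $\theta\cdot\phi\in q$. Combined with $\mu\cdot p=\mu\cdot q$, I would run a finite-consistency/saturation argument in a saturated extension $\bar\R$ to produce $a\models p$, $b\models q$, and $\eta\models\mu$ with $b=\eta\cdot a$: given finite $\theta\in\mu$, $\phi\in p$, $\psi\in q$ one has $\theta\cdot\phi\in\mu\cdot p\subset q$, and since $q$ is complete the formula $(\theta\cdot\phi)\wedge\psi$ is consistent, yielding witnesses $u\in\theta$, $v\in\phi$ with $u\cdot v\in\psi$.

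The second ingredient converts continuity into a first-order statement that can be evaluated at $\eta$. Given an open $R$-definable $Z\in q$, continuity of the action at $(e,q)$ will provide an open $R$-definable $U\ni e$ and an open $R$-definable $Z'\in q$ such that $g\cdot r\in[Z]$ for every $g\in U(R)$ and every $r\in\beta_G(\R)\cap[Z']$. Since the type of any $R$-point $z\in Z'(R)$ is closed by Lemma \ref{L:BetaClosed} and contains $Z'$, this forces $g\cdot z\in Z(R)$; hence the $\LL_R$-sentence $\forall g\in U\,\forall z\in Z'\,(g\cdot z\in Z)$ holds in $\R$, and by elementarity also in $\bar\R$. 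Because $\eta\models\mu$ gives $\eta^{-1}\in U(\bar\R)$ (inversion is continuous at $e$, so the formula defining $\{x:x^{-1}\in U\}$ also lies in $\mu$), taking $g=\eta^{-1}$ and $z=b\in Z'(\bar\R)$ yields $a=\eta^{-1}\cdot b\in Z(\bar\R)$, hence $Z\in p$; the reverse inclusion is symmetric. The main obstacle I anticipate is precisely this bridge from spectral-topology continuity on closed points to a first-order statement in $\bar\R$: the key trick will be exploiting the density and closedness of realized types to promote the open-definable-neighborhood data from continuity to an honest $\LL_R$-sentence, which elementarity then lifts to $\bar\R$ so that it can be evaluated at the infinitesimal $\eta$.
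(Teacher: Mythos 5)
Your proof is correct, and while your ($\Leftarrow$) direction is exactly the paper's (transport of the continuous action on $S^\mu_G(\R)$ through the equivariant homeomorphism), your ($\Rightarrow$) direction takes a genuinely different route. The paper reduces everything to the universal property of the $\mu$-compactification: since the action on $\beta_G(\R)$ is assumed continuous and $i\colon G\to\beta_G(\R)$ is definably separated by closed sets (Remark \ref{R:DefSep}), Claim A.12 of \cite{PS17} produces a continuous $i_*\colon S^\mu_G(\R)\to\beta_G(\R)$ extending $i$; then $i_*\circ\pi|_{\beta_G(\R)}$ is the identity on the dense set of realized types, hence the identity, which gives injectivity of $\pi|_{\beta_G(\R)}$ at once. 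You instead prove injectivity by hand: from $p\sim_\mu q$ you extract, via $q\vdash\mu\cdot q$ and compactness in a saturated extension, realizations with $b=\eta\cdot a$ for an infinitesimal $\eta\models\mu$; you then convert continuity of the action at $(e,q)$ into an $\LL_R$-sentence using that realized types are closed (Lemma \ref{L:BetaClosed}) and dense in the basic open $\beta_G(\R)\cap[Z']$, and transfer it to $\bar\R$ to evaluate at $\eta^{-1}$. Both arguments are sound; the paper's is shorter because it outsources the work to the Peterzil--Starchenko machinery, whereas yours is more self-contained (it only needs Claim A.5 for the easy direction), makes the model-theoretic content explicit ($\mu$-equivalent types differ by an infinitesimal translation), and is structurally close to the paper's own proof of Theorem \ref{teo:samuel}, with the continuity hypothesis playing the role that definable compactness and Lemma \ref{lema:sep} play there. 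The only points worth tightening are cosmetic: $\mu\cdot p$ and $\mu\cdot q$ are equivalent rather than equal as sets of formulas (so you should argue $q\vdash\mu\cdot q\vdash\mu\cdot p$, whence $\mu\cdot p\subset q$ by completeness of $q$), and the symmetric inclusion requires rerunning the compactness argument with the roles of $p$ and $q$ exchanged to get $a=\eta'\cdot b'$ for fresh realizations.
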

\begin{proof} Assume that the action of $G$ on $\beta_G(\R)$ is continuous and consider the map $i: G\rightarrow \beta_G(\mathcal{R})$ given by $g\mapsto \text{tp}(g/R)$. Note that $i$ is by Remark \ref{R:DefSep} definably separated (by closed sets). Thus, by \cite[Claim A.12]{PS17} there is a continuous map $i_*:S^\mu_G(\mathcal{R})\rightarrow \beta_G(\mathcal{R})$ that extends $i$. Notice now that $i_* \circ \pi|_{\beta_G(\mathcal{R})}: \beta_G(\R) \to \beta_G(\R)$ is the identity on realized types. Hence, it is the identity, so $\beta_G(\mathcal{R})$ and $S^\mu_G(\mathcal{R})$ are homeomorphic. Reciprocally, if $\pi|_{\beta_G(\mathcal{R})}$ is a homeomorphism, then we deduce since it preserves the action of $G$ that the action of $G$ over ${\beta_G(\mathcal{R})}$ is continuous from the fact the action of $G$ over $S_G^\mu(\R)$ is continuous \cite[Claim A.5]{PS17}.
\end{proof}

Next, we prove that $\beta_G(\R)$ and $S_G^\mu(\R)$ are homeomorphic whenever $G$ is defi\-na\-bly compact. We will need the following topological lemma:
\begin{lemma}\label{lema:sep} Let $Z$ be a definably compact subset of $G$, and let $V$ be an open definable subset of $G$ with $Z\subset V$. Then there is a definable open neighborhood $W$ of the identity such that $WZ\subset V$.
\end{lemma}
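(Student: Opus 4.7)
The plan is to prove Lemma \ref{lema:sep} by a standard tube-lemma argument adapted to the definable setting, using definable choice and the characterization of definable compactness via definable curves. I would argue by contradiction and shrink neighborhoods of the identity along a definable curve until continuity of the group operation together with definable compactness of $Z$ forces a contradiction.

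More precisely, suppose for contradiction that no such $W$ exists. Since $\R$ expands a real closed field we have the Euclidean norm on $R^n$, and hence the open balls $W_r := \{g\in G:\|g-e\|<r\}$ form a definable family of open neighborhoods of $e$ in $G$ (with the topology induced from $R^n$, which by the embedding from Robson's theorem coincides with the group topology). Our contradictory assumption then reads: for every positive $r\in R$, the definable set
\[
A_r:=\{(g,z)\in W_r\times Z\ :\ gz\notin V\}
\]
is nonempty.

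Applying definable choice in $\R$ (which is available in any o-minimal expansion of an ordered field), I would select a definable function $r\mapsto (w(r),z(r))$ with $(w(r),z(r))\in A_r$ for all small $r>0$. By o-minimal cell decomposition, after restricting to a subinterval $(0,r_0)$ I may assume both $w$ and $z$ are continuous. The constraint $\|w(r)-e\|<r$ yields $\lim_{r\to 0^+}w(r)=e$. Since $Z$ is definably compact and $z(\cdot)\colon(0,r_0)\to Z$ is a definable continuous curve, the limit $z^\ast:=\lim_{r\to 0^+}z(r)$ exists and lies in $Z$. By continuity of the group multiplication on $G$, we have
\[
\lim_{r\to 0^+}w(r)z(r)=e\cdot z^\ast=z^\ast\in Z\subset V.
\]
On the other hand, $w(r)z(r)\in G\setminus V$ for every $r\in(0,r_0)$, and $G\setminus V$ is closed in $G$, so $z^\ast\in G\setminus V$, a contradiction.

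The argument is essentially routine; the only minor subtlety to check is that definable Skolem functions and the limit characterization of definable compactness for definable curves are both available in o-minimal expansions of real closed fields, which are standard facts. No single step is a serious obstacle, and the whole proof amounts to the classical tube-lemma argument turned into a definable curve argument.
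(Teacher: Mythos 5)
Your proof is correct and follows essentially the same strategy as the paper's: argue by contradiction, use definable choice to extract a definable curve of counterexamples, invoke the curve-completion characterization of definable compactness to get a limit point $z^\ast\in Z$, and derive a contradiction from continuity of the group operation. The only cosmetic difference is that you choose a pair of curves $(w(r),z(r))$ directly, whereas the paper introduces an auxiliary supremum function on $Z$ and selects a single curve $\alpha(t)\in Z$; both hinge on the same facts.
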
	
\begin{proof}Let $f:G\times G \rightarrow G$ be the continuous group operation. Note that $f^{-1}(V)$ is an open definable subset of $G\times G$, and $\{e\}\times Z \subset f^{-1}(V)$. Given $\varepsilon>0$ and $z\in G$ we denote $\widetilde{B}_\varepsilon(z):=B_\varepsilon(z)\cap G$. For every $z\in Z$ we can consider
\[
f(z):=\sup\{ \varepsilon>0 : \exists \delta>0, \, \widetilde{B}_\varepsilon(e) \cdot \widetilde{B}_\delta(z) \subset V\}>0.
\]	
Suppose  there is no $\varepsilon>0$ such that for every $z\in Z$ we have $f(z)>\varepsilon$. Then by definable choice there is a definable curve $\alpha:(0,\infty)\rightarrow Z$ such that $f(\alpha(t))<t$ for every $t>0$. By o-minimality there is some $\varepsilon_1>0$ such that $\alpha:(0,\varepsilon_1)\rightarrow Z$ is continuous. As $Z$ is definably compact, we have $z_0:=\lim_{t\to 0}\alpha(t)\in Z$, so there are $\varepsilon_0>0$ and $\delta_0>0$ such that $ \widetilde{B}_{\varepsilon_0}(e)\cdot \widetilde{B}_{\delta_0}(z_0) \subset V$.
Since $\alpha$ is continuous, the open definable set $\alpha^{-1}(\widetilde{B}_{\delta_0}(z_0))$ contains an interval of the form $(0,\varepsilon_2)$. Thus, $\alpha(t)\in \widetilde{B}_{\delta_0}(z_0)$ for every $t<\varepsilon_2$. In particular, for each $t<\varepsilon_2$ there is $\delta_t>0$ with $\widetilde{B}_{\delta_t}(\alpha(t))\subset \widetilde{B}_{\delta_0}(z_0)$, so 
\[
\widetilde{B}_{\varepsilon_0}(e)\cdot \widetilde{B}_{\delta_t}(\alpha(t))\subset  \widetilde{B}_{\varepsilon_0}(e)\cdot \widetilde{B}_{\delta_0}(z_0) \subset V.
\]
It follows that $f(\alpha(t))>\varepsilon_0$ for every $t<\varepsilon_2$, which contradicts the fact that $\varepsilon_0<f(\alpha(t))<t$ for every $t<\min\{\varepsilon_0,\varepsilon_2\}$. 
Consequently, there is an $\varepsilon>0$ such that $W:=\widetilde B_\varepsilon(e)$ satisfies $WZ\subset V$, as required.
\end{proof}

We can now state and prove the main result of this section.

\begin{theorem}\label{teo:samuel}For each type $p\in S_G(\mathcal{R})$ we have that  $r(p) \sim_\mu p$. 	 Furthermore, if $p$ is bounded, then $r(p)$ is the unique type in $\beta_G(\R)$ satisfying this, i.e., \[[p]_\mu \cap \beta_G(\mathcal{R})=\{r(p)\}.\]	
\end{theorem}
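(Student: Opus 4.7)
I plan to split the statement into two claims: (i) $r(p)\sim_\mu p$ for every $p\in S_G(\mathcal R)$, and (ii) under the boundedness hypothesis, $r(p)$ is the unique closed point in $[p]_\mu$. Claim (i) exploits the specialization $r(p)\in\mathrm{cl}^{\rm t}(p)$, which translates to ``open definable sets of $r(p)$ lie in $p$'' and dually ``closed definable sets of $p$ lie in $r(p)$''. Claim (ii) combines the normality of the spectral topology with Lemma \ref{lema:sep}.

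For (i) I will first establish an approximation lemma: for any definable $\psi\subseteq G$ and any $\theta\in\mu$, $\mathrm{cl}(\psi)\subseteq \theta\cdot\psi$. Since $\mu$ is closed under inversion, $\theta^{-1}z$ is an open neighborhood of any $z\in G$, so for $z\in\mathrm{cl}(\psi)$ I may pick $y\in\psi\cap \theta^{-1}z$ and write $z=ty$ with $t\in\theta$. Both directions of $\mu\cdot p\equiv \mu\cdot r(p)$ will then follow after choosing $\theta'\in\mu$ with $\theta'\cdot\theta'\subseteq \theta$ (continuity of multiplication at $e$). For the forward direction, given $\theta\in\mu$ and $\psi\in r(p)$, the open set $\theta'\cdot\psi$ lies in $r(p)$, hence in $p$, so $\theta'\cdot(\theta'\cdot\psi)\in\mu\cdot p$ refines $\theta\cdot\psi$. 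For the reverse direction, given $\theta\in\mu$ and $\psi\in p$, the closed set $\mathrm{cl}(\psi)$ lies in $r(p)$, and the approximation lemma gives $\theta'\cdot\mathrm{cl}(\psi)\subseteq\theta'\cdot(\theta'\cdot\psi)\subseteq \theta\cdot\psi$, so the left-hand side, which is in $\mu\cdot r(p)$, refines $\theta\cdot\psi$.

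For (ii) assume $B_M\cap G\in p$. Since this set is closed, $r(p)$ is also bounded. By (i), replacing $p$ by $r(p)$ reduces the task to: given $p,q\in\beta_G(\mathcal R)$ with $p$ bounded and $p\sim_\mu q$, deduce $p=q$. Suppose for contradiction $p\neq q$. By applying the normality of $\Spec_G(\mathcal R)$ (Lemma \ref{lemanormal}) twice, together with the identification $\overline{[U]}=[\mathrm{cl}(U)]$ in the spectral topology, I will obtain disjoint open definable $U,V\subseteq G$ with $p\in[U]$, $q\in[V]$ and $\mathrm{cl}(U)\cap\mathrm{cl}(V)=\emptyset$ as subsets of $G$. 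The set $Z:=\mathrm{cl}(U)\cap B_M\cap G$ is then closed and bounded, hence definably compact, lies in $p$, and is contained in the open definable set $\mathrm{cl}(V)^c$. Lemma \ref{lema:sep} supplies $W\in\mu$ with $W\cdot Z\subseteq \mathrm{cl}(V)^c$, and in particular $W\cdot Z\cap V=\emptyset$. But $W\cdot Z\in \mu\cdot p\equiv\mu\cdot q$ and $q\vdash \mu\cdot q$ force $W\cdot Z\in q$, which together with $V\in q$ contradicts $W\cdot Z\cap V=\emptyset$.

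The main technical point I expect to require care is the two-step normality argument yielding disjoint \emph{definable closures} $\mathrm{cl}(U)$ and $\mathrm{cl}(V)$, rather than merely disjoint $U$ and $V$; this uses that in a normal (T0) space the Urysohn-type shrinking $\overline{U_1}\subseteq U_0$ still holds, combined with the spectral identity $\overline{[U]}=[\mathrm{cl}(U)]$. Once this separation is secured inside $G$, the definable compactness of $Z$ and Lemma \ref{lema:sep} convert it into the required $\mu$-translate that contradicts the $\mu$-equivalence.
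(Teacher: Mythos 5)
Your proposal is correct. The second half (uniqueness of the closed point in $[p]_\mu$ for bounded $p$) follows the paper's argument almost verbatim: reduce to $r(p)=p$, separate $p$ from a putative second closed point $q$ by open definable sets with disjoint closures, use boundedness to make the relevant closed set definably compact, and invoke Lemma \ref{lema:sep} to produce a $\mu$-translate lying in $\mu\cdot p$ but inconsistent with $q$; your extra care with the two-step normality argument and the identity $\mathrm{cl}^{\rm t}([U])=[\mathrm{cl}(U)]$ is welcome, as the paper is terse at that point. The first half, however, takes a genuinely different route. The paper observes that $[\mu\cdot p]=\pi^{-1}(\{[p]_\mu\})$ is closed in $\Spec_G(\R)$ by the continuity of the projection $\pi$ (Lemma \ref{L:ContSpecMu}), so $r(p)\in\mathrm{cl}^{\rm t}(p)\subset[\mu\cdot p]$, and then quotes the characterization $q\sim_\mu p\iff q\vdash\mu\cdot p$ from Peterzil--Starchenko. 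You instead verify the two-sided equivalence $\mu\cdot p\equiv\mu\cdot r(p)$ by hand, using only the specialization dictionary (open sets of $r(p)$ lie in $p$, closed sets of $p$ lie in $r(p)$), the square-root property $\theta'\cdot\theta'\subset\theta$ of $\mu$, and your approximation lemma $\mathrm{cl}(\psi)\subset\theta\cdot\psi$. Your version is more elementary and self-contained (it does not need the external Claim 2.7 nor the continuity of $\pi$), at the cost of redoing the $\mu\cdot\mu=\mu$ bookkeeping that the paper has already packaged into Lemma \ref{L:ContSpecMu}; the paper's version is shorter and makes the topological mechanism (closedness of fibres of $\pi$) more transparent.
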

\begin{proof}Let $p\in S_G(\mathcal{R})$ and note first that $q\sim_\mu p$ if and only if $q\vdash \mu\cdot p$, see  \cite[Claim 2.7]{PS17}. Thus, the partial type $\mu \cdot p$ determines a closed subset 
\[
[\mu\cdot p ] = \left\{ q\in \Spec_G(\mathcal{R}) \ | \ q \vdash \mu\cdot p \right\}
\]
in $\Spec_G(\mathcal{R})$, because $[\mu\cdot p ]$ is the inverse image of $\{[p]_\mu\}$ under $\pi: \Spec_G(\R)\to S_G^\mu(\R)$, which is continuous by Lemma \ref{L:ContSpecMu}. Hence, we deduce that $\mathrm{cl}^{\rm t}(p)\subset [\mu \cdot p]$, so $r(p)\in [\mu \cdot p]$, which yields that $r(p) \sim_\mu p$.

Assume now that $p$ is bounded, and let us show: $[p]_\mu \cap \beta_G(\mathcal{R})=\{r(p)\}$. We can clearly assume $r(p)=p$ and suppose, to get a contradiction, that there exists some $q\in \beta_G(\mathcal{R})\setminus\{p\}$ such that $q\sim_\mu p$. Since $\beta_G(\mathcal{R})$ is quasi-compact and Hausdorff, there are two disjoint open definable subsets $U_1\subset G$ and $U_2\subset G$ with $p\in [U_1], q\in [U_2]$ and $\overline{U_1}\cap \overline{U_2}=\emptyset$, where $\overline{U_i}$ denotes the topological closure of $U_i$ in $G$ for $i=1,2$. In particular, since $p$ is bounded, we can assume that $U_1$ is also bounded, so that $\overline{U_1}$ is definably compact and contained in the open definable set $G\setminus \overline{U_2}$. By Lemma \ref{lema:sep} there is an open definable neighborhood $W$ of $e$ such that $W\cdot \overline{U_1}\subset G\setminus\overline{U_2}$. On the other hand, note that $W\cdot U_1\in \mu\cdot p$. Since $q\sim_\mu p$, we have $q \vdash \mu \cdot p$ and hence $q\in [W\cdot U_1]$, which is a contradiction, because $U_2\cap  (W\cdot U_1)=\emptyset$.\end{proof}

\begin{cor}\label{C:HomeoBetaMu}Let $G$ be a definably compact group. Then the map $p\mapsto [p]_\mu$ from $\beta_G(\mathcal{R})$ to $S^\mu_G(\mathcal{R})$ is a homeomorphism, so the action of $G$ on  $\beta_G(\mathcal{R})$ is continuous.
\end{cor}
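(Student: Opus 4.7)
The plan is to assemble the corollary directly from the machinery already built up in the section, with definable compactness playing the role of the ``boundedness'' hypothesis in Theorem \ref{teo:samuel}.

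First, I would verify that the map $\pi|_{\beta_G(\R)}: \beta_G(\R)\to S^\mu_G(\R)$, $p\mapsto [p]_\mu$, is a continuous surjection. This is exactly Lemma \ref{L:ContSpecMu}, so no extra work is needed. Next, I would prove injectivity. Since $G$ is definably compact, every definable subset of $G$ is bounded, hence every $p\in S_G(\R)$ is bounded in the sense required by Theorem \ref{teo:samuel}. That theorem then yields $[p]_\mu\cap \beta_G(\R)=\{r(p)\}$ for every $p\in S_G(\R)$; in particular, two distinct closed points $p,q\in \beta_G(\R)$ satisfy $r(p)=p\neq q=r(q)$, so they lie in different $\mu$-equivalence classes. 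This gives injectivity of $\pi|_{\beta_G(\R)}$.

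Having a continuous bijection, the homeomorphism claim is purely topological: by Corollary \ref{retractionspec} the space $\beta_G(\R)$ is quasi-compact, and $S_G^\mu(\R)$ is Hausdorff (as recalled before Lemma \ref{L:ContSpecMu}). A continuous bijection from a quasi-compact space to a Hausdorff space is closed and therefore a homeomorphism, which settles the first assertion. Finally, the continuity of the action of $G$ on $\beta_G(\R)$ follows at once from Proposition \ref{lema:contimplieshoemo}, since we have just verified that $\pi|_{\beta_G(\R)}$ is a homeomorphism.

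The only nontrivial ingredient in this chain is the uniqueness part of Theorem \ref{teo:samuel}, which is where definable compactness is genuinely used (via Lemma \ref{lema:sep}); everything else is formal. I do not anticipate any serious obstacle, since all three inputs—continuity/$G$-equivariance of $\pi$, the bounded-case uniqueness of $r(p)$ in its $\mu$-class, and the equivalence between continuity of the action and $\pi|_{\beta_G(\R)}$ being a homeomorphism—have already been established in the preceding results.
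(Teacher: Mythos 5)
Your proposal is correct and follows essentially the same route as the paper: continuity and surjectivity from Lemma \ref{L:ContSpecMu}, injectivity from the uniqueness clause of Theorem \ref{teo:samuel} (with definable compactness guaranteeing boundedness of all types in $S_G(\R)$), the compact-to-Hausdorff argument for the homeomorphism, and Proposition \ref{lema:contimplieshoemo} (equivalently, \cite[Claim A.5]{PS17}) for continuity of the action. No gaps.
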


\begin{proof}By Lemma \ref{L:ContSpecMu},  the natural projection  $\beta_G(\mathcal{R})\rightarrow S^{\mu}_G(\mathcal{R})$ is continuous. In addition, it is bijective by Theorem \ref{teo:samuel} and therefore it is a homeomorphism. Hence, the action of $G$ on $\beta_G(\mathcal{R})$ is continuous, which follows from the proof of \cite[Claim A.5]{PS17}.
\end{proof}

In general, if $p$ is not bounded then $[p]_\mu \cap \beta_X(\mathcal{R})$ is not a singleton. We refer to the following example from \cite{PS17}.

\begin{example}
Let $G=\mathbb{R}^2$ and let $\R$ be an elementary extension of the field of real numbers $\mathbb{R}$. Fix some infinite element $a\in R$ and consider the types $p=\tp(a,0/\mathbb R)$ and $q=\tp(a,a^{-1}/\mathbb{R})$. Both types are closed in $\Spec_{G}(\mathbb{R})$ but $p \sim_\mu q$. In particular, the action $\mathbb{R}^2\times \beta_{G}(\mathbb{R})\rightarrow \beta_{G}(\mathbb{R})$ is not continuous by Lemma \ref{lema:contimplieshoemo}.
\end{example}

\begin{remark}\label{rmkgeneral}An inspection of the proof of Lemma \ref{L:ContSpecMu} yields that: If $G$ is a definable o-minimal group, the set $X\subset R^n$ is a definable $G$-space and the action $(g,x)\mapsto g\cdot x$ is definable, continuous and for every $x\in X$ the map $g\mapsto g\cdot x$ is open, then the natural projection map $\beta_X(\mathcal{R})  \rightarrow S^\mu_X(\mathcal{R}) $ is continuous, so it is a homeomorphism. It is enough to adapt the proofs of Theorem \ref{teo:samuel} and Corollary \ref{C:HomeoBetaMu}. 
\end{remark}

\section{Closed, finitely satisfiable and invariant types}\label{sec:invcoh}

Fix an o-minimal expansion $\R$ of a real closed field and let $\bar{\mathcal R}$ be a sufficiently saturated elementary extension. Given a definable subset $X\subset \bar R^n$ definable over $\R$, we can consider the space of closed points $\beta_X(\bar \R)$ within $\Spec_X(\bar \R)$, as well as the usual space of types $S_X(\bar \R)$. In $S_X(\bar \R)$ we have two natural subsets with respect to the structure $\R$. Namely, the set of types $\CohR{X}$ which are finitely satisfiable in $R$ and the set of $R$-invariant types $\InvR{X}$. Recall that a type $p\in S_X(\bar \R)$ is:
\begin{enumerate}[i)]
	\item $R$-\emph{invariant} if for every automorphism $\sigma\in {\rm Aut}_R(\bar{\R})$ of $\bar{\R}$ that fixes $R$ pointwise we have $\sigma(p)=p$, where $\sigma(p):=\{\psi(x,\sigma(b)): \psi(x,b)\in p\}$, and
	\item  \emph{finitely satisfiable} in $R$ if for every formula $\psi(x,b)\in p$ the set of realizations $\psi(R,b):=\psi(\bar{R},b)\cap R$ of $\psi(x,b)$ is non-empty.
\end{enumerate} 
 We briefly recall some well-known facts on invariant and finitely satisfiable types.
 
\begin{remark} \label{rmk:invcoh}  It is very easy to verify that finitely satisfiable types in $R$ are $R$-invariant. Indeed, given a formula $\psi(x,b)$ and some $\sigma\in {\rm Aut}_R(\bar{\R})$, if $p$ is finitely satisfiable in $R$, then the relation $\psi(a,b)\leftrightarrow \psi(a,\sigma(b))$ holds for every $a\in R$. Thus $\psi(x,\sigma(b))\in p$ whenever $\psi(x,b)\in p$. 
	
	The sets $\InvR{X}$ and $\CohR{X}$ are both closed in $S_X(\bar \R)$. For the latter, notice  that 	$$\CohR{X}=\bigcap \left\{ [\neg \psi]  :  \psi \text{ is an $\LL_{\bar R}$-formulas with $\psi(R)=\emptyset$} \right\}.$$
For the former, given any $\sigma\in {\rm Aut}_R(\bar{\R})$, the map $S_X(\bar \R)\rightarrow S_X(\bar \R)\times S_X(\bar \R)$ defined by $p \mapsto (p,\sigma(p))$ is continuous. Thus, the preimage $C_\sigma$ of the dia\-go\-nal of $S_X(\bar \R)\times S_X(\bar \R)$ under this map is a closed subset of $S_X(\bar \R)$, because $S_X(\bar \R)$ is Hausdorff. Hence $\InvR{X}=\bigcap_{\sigma \in{\rm Aut}_R(\bar{\R})}C_\sigma$ is a closed subset of $S_X(\bar \R)$. 	 
\end{remark} 
It will also be  useful to regard  $\CohR{X}$ as the space of types of the externally definable sets. Recall that a set $Z\subset R^n$ is called {\em externally definable} if there exists some $\LL_{\bar R}$-formula $\phi(x)$ such that $Z=\phi(R)$. If $S_X(\R)^\text{ext}$ denotes the set of ultrafilters of externally definable subsets of $X$ with the Stone topology, then the map 
\[\CohR{X}  \rightarrow S_X(\R)^\text{ext} , \ 
p \mapsto  \{\psi(R):\psi(x)\in p\}
\]
is certainly a homeomorphism.

\subsection{Finitely satisfiable types are closed}\label{subsec:cohclosed} We aim to prove that $\CohR{X}$ is contained and closed in $\beta_X(\R)$.  In order to see this, we obtain some results on externally definable sets which are interesting by themselves. We show that every externally definable subset of $R^n$ is given by the trace of an open definable subset of $\bar R^n$. 

The cell decomposition \cite[Ch.\,3]{vdD98} is a fundamental result for the development of o-minimal structures. A cell in the o-minimal structure $\bar\R$ is defined inductively. A subset of $\bar{R}$ is a \emph{cell} if it is a point or an open interval of $\bar{R}$ with endpoints in $\bar{R}\cup \{\pm \infty\}$. More generally, a subset $C\subset \bar{R}^{n+1}$ is a \emph{cell} if there is a cell $D\subset \bar{R}^{n}$ an either there is a continuous  $\bar{R}$-definable function $f:D\rightarrow \bar{R}$ such that $C$ is the graph of $f$, or there are two continuous $\bar{R}$-definable functions $f_1,f_2:D\rightarrow \bar{R}$ with $f_1(x)<f_2(x)$ for all $x\in D$ such that $C=\{(x,y):x\in D \text{ and } 
 f_1(x)<y<f_2(x)\}$.
\begin{lemma}
	Let $ C\subset \bar R^n$ be a cell. Then there exists some open cell $U\subset  \bar R^n$ such that $C\subset U$ and $C\cap R^n=U\cap R^n$. Moreover, there is also a definable continuous retraction $r:U\to C$.
\end{lemma}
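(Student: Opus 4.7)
The plan is to prove this by induction on $n$, with the base case using the saturation of $\bar R$ and the inductive step handled by a case split on the type of cell, the graph case being the main difficulty that ultimately draws on Dolich's forking characterization.

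For the base case $n=1$, a cell is either an open interval or a singleton. Open intervals allow us to take $U=C$ with identity retraction. For $C=\{a\}$ we construct $c,d\in \bar R$ with $c<a<d$ such that $(c,d)\cap R=\{a\}\cap R$. If $a\in R$, the partial type $\{r<x : r\in R,\, r<a\}\cup\{x<a\}$ is finitely satisfiable in $R$ (by density of $R$ in itself), hence realized in the saturated extension $\bar R$, and symmetrically for $d$; if $a\notin R$, the two cuts $\{r\in R:r<a\}$ and $\{r\in R:r>a\}$ can likewise be separated from $a$ inside $\bar R$ by saturation. The retraction $r\colon U\to C$ is then the constant map to $a$.

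For the inductive step, let $C\subset \bar R^{n+1}$ be a cell whose base is a cell $D\subset \bar R^n$. The inductive hypothesis supplies an open cell $D'\supset D$ with $D\cap R^n=D'\cap R^n$ and a continuous definable retraction $r'\colon D'\to D$. If $C$ is a strip cell $\{(x,y): x\in D,\ f_1(x)<y<f_2(x)\}$, I extend the defining functions by $F_i:=f_i\circ r'$ and set $U=\{(x,y): x\in D',\ F_1(x)<y<F_2(x)\}$. This is an open cell, the retraction $(x,y)\mapsto (r'(x),y)$ lands in $C$ because $(F_1(x),F_2(x))=(f_1(r'(x)),f_2(r'(x)))$, and the trace equality $U\cap R^{n+1}=C\cap R^{n+1}$ follows from $D\cap R^n=D'\cap R^n$ together with $F_i|_{D}=f_i$.

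The real work is the graph case $C=\{(x,f(x)):x\in D\}$. Writing $F:=f\circ r'$, I aim to construct continuous definable functions $g_1,g_2\colon D'\to \bar R$ with $g_1<F<g_2$ such that the open cell $U=\{(x,y):x\in D',\ g_1(x)<y<g_2(x)\}$ satisfies the trace condition: for each $x\in D\cap R^n$, the interval $(g_1(x),g_2(x))$ meets $R$ in $\{f(x)\}$ when $f(x)\in R$ and is disjoint from $R$ when $f(x)\notin R$. The retraction is then $r(x,y)=(r'(x),f(r'(x)))$. A constant infinitesimal for $g_1,g_2$ will not work: when $f(x)\notin R$, the value $f(x)$ may be infinitesimally close to $R$ at a scale that varies with $x$, so the choice of $g_i(x)$ must genuinely track the distance from $F(x)$ to $R$.

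The main obstacle is this uniform-in-$x$ construction of $g_1,g_2$, and I expect it to be handled by Dolich's characterization of forking in o-minimal expansions of a real closed field, as advertised in the Introduction. The idea is that, passing to a uniform family of base-case arguments, Dolich's result allows one to locate, for each $x\in D'$, a definable scale $\delta(x)$ below which no element of $R$ approaches $F(x)$ except at $F(x)$ itself (when applicable), and to verify that these choices assemble into a continuous definable function via the standard cell-decomposition and dimension apparatus. Once $g_1,g_2$ are in hand, the verification that $U$ is an open cell, that $C\subset U$, that $U\cap R^{n+1}=C\cap R^{n+1}$, and that $r$ is a continuous definable retraction is routine.
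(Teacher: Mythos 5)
Your base case and your treatment of band cells match the paper's argument. The problem is the graph case: you correctly identify it as the crux, but you never actually construct the functions $g_1,g_2$ — you only announce that the construction ``is expected to be handled by Dolich's characterization of forking'' and that the verification is routine. That is not a proof; the central step of the induction is missing. (Dolich's theorem is indeed used in this paper, but later, for Theorem \ref{T:InvBddCoh}; it plays no role in this lemma.)

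Moreover, the obstacle you set up for yourself is illusory, and this is why the gap matters: your claim that ``a constant infinitesimal will not work'' because $f(x)$ may be close to $R$ ``at a scale that varies with $x$'' overlooks that you are free to choose the scale over a larger small model. Let $R'=\dcl\bigl(R\cup\{b\}\bigr)$ where $b$ lists the parameters of $f$ and of the retraction $r'$ supplied by induction; this is an elementary submodel of $\bar R$ of small cardinality, so by saturation there is a single $\delta>0$ with $\delta<s$ for every positive $s\in R'$. For $x\in D\cap R^n$ one has $f(x)\in R'$, so for any $y\in R$ with $y\neq f(x)$ the distance $|y-f(x)|$ is a positive element of $R'$ and hence exceeds $\delta$. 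Therefore the constant-width tube $U=\{(x,y): x\in D',\ |y-F(x)|<\delta\}$ with $F=f\circ r'$ already satisfies $U\cap R^{n+1}\subset C\cap R^{n+1}$, and together with $C\subset U$ and the retraction $(x,y)\mapsto(r'(x),F(x))$ this closes the induction. This is exactly the paper's proof, and it is the same device you yourself use in your base case ($\delta$ infinitesimal over a model containing $R$ \emph{and} $a$); you simply failed to carry that idea into the inductive step, and instead invented a harder problem that you then left unsolved.
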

\begin{proof} 
	We proceed by induction on $n$. For $n=1$, suppose first that $C=\{a\}$. In this case set $U=(a-\delta,a+\delta)$ for some $\delta>0$ infinitesimal with respect to a model containing $R$ and $a$. It is clear that $U$ and the definable constant map $r:U\to C$ defined by $u\mapsto a$ satisfy the requirements.  On the other hand, if $C$ is an open interval, then it suffices to set $U=C$ and $r$ as the identity map.  
	
	Assume that the statement holds for $n$ and let $C$ be a cell of $\bar R^{n+1}$. Suppose first that $C$ is the graph $\Gamma(f)$ of a definable continuous function $f:D\to \bar R$ with $D\subset \bar R^n$ a cell. By induction, there is a definable open cell $U$ of $\bar R^n$ containing $D$ such that $D\cap R^n=U\cap R^n$ and a definable continuous retraction $r:U\to D$. Set $g=f\circ r$, which is a definable continuous function such that $g|_{D} = f$. Set $V=(g-\delta,g+\delta)_U$ for some $\delta>0$  infinitesimal with respect to $R$, that is, 
	\[
	V=\{ (\bar x,y)\in  U\times \bar R \, | \, y\in (g(\bar x)-\delta,g(\bar x)+\delta) \}.
	\]
	It is clear that $V\subset \bar R^{n+1}$ is an open cell such that $C\subset V$. Moreover, since $U\cap R^n =D\cap R^n$ and $g|_{D} = f$, it holds $V\cap R^{n+1}\subset C\cap R^{n+1}$. To obtain a definable continuous retraction $\tilde r:V\to C$ it is enough to consider the map $(\bar x,y)\mapsto (r(\bar x), g(\bar x))$.
	
	Finally, suppose that $C=(f_1,f_2)_D$ for some cell $D\subset \bar R^n$ and some definable functions $f_1,f_2:D\to \bar R$ with $f_1<f_2$. By induction hypothesis, there exists a definable open cell $U$ of $\bar R^n$ containing $D$ such that $D\cap R^n=U\cap R^n$ and a definable continuous retraction $r:U\to D$. For $i=1,2$ set $g_i=f_i\circ r:U\to \bar R$ and note that $g_i|_{D} = f_i$ for $i=1,2$. Now, take $V=(g_1,g_2)_U$, that is, 
	\[
	V=\{ (\bar x,y)\in \bar U\times \bar R \, | \, y\in (g_1(\bar x),g_2(\bar x)) \}.
	\]
	It holds  $C\subset V$ and that $V\cap R^{n+1}\subset C\cap R^{n+1}$. Now, to obtain a definable retraction $\tilde r:V\to C$ it is enough to consider the map $(\bar x,y)\mapsto (r(\bar x), y)$, as required.
\end{proof}

\begin{prop}\label{P:ExtDefOpen}
	Let $Z \subset R^n $ be an externally definable set given by a set $W$ of $\bar R^n$, i.e. $W\cap R^n=Z$. Then there is a definable open set $U$ of $\bar R^n$ such that $W\subset U$ and $Z=U\cap R^n$. 
\end{prop}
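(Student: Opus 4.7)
The plan is to reduce to the cell case, since the previous lemma has already solved the problem for cells. First, apply cell decomposition in $\bar{\mathcal R}$ to write $W = C_1 \cup \cdots \cup C_k$ as a finite disjoint union of cells $C_i \subset \bar R^n$. By the preceding lemma, for each $i$ we can choose an open cell $U_i \subset \bar R^n$ with $C_i \subset U_i$ and $C_i \cap R^n = U_i \cap R^n$ (the infinitesimals used in the lemma's construction can be chosen in $\bar R$ thanks to the saturation of $\bar{\mathcal R}$).

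Now set $U := U_1 \cup \cdots \cup U_k$. This is definable and open in $\bar R^n$, and it contains $W$ because each $C_i \subset U_i$. For the trace, one computes
\[
U \cap R^n \;=\; \bigcup_{i=1}^k (U_i \cap R^n) \;=\; \bigcup_{i=1}^k (C_i \cap R^n) \;=\; W \cap R^n \;=\; Z,
\]
which is exactly the desired equality. The two inclusions $Z \subset U \cap R^n$ and $U \cap R^n \subset Z$ both follow directly from $C_i \cap R^n = U_i \cap R^n$ applied cellwise.

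The only nontrivial step is the cellular lemma that precedes this proposition, which has already been carried out by an inductive construction of infinitesimal tubular neighborhoods around graph cells and sandwich cells. Given that lemma, the proposition reduces to the straightforward observation that taking a finite union of open extensions preserves both containment of $W$ and the trace on $R^n$, so no additional obstacle arises.
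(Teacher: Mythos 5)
Your proposal is correct and follows essentially the same route as the paper: decompose $W$ into cells, apply the preceding lemma cellwise to obtain open cells $U_i$ with the same trace on $R^n$, and take their union. No gaps.
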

\begin{proof}
   	Let $\mathcal C=\{C_1,\ldots,C_r\}$ be a cell decomposition of $\bar R^n$ compatible with $W$. For each $i$ let $U_i$ be the open cell given by the previous lemma containing $C_i$ and note that
	\[
	Z = W\cap R^n = \bigsqcup_{i=1}^r C_i\cap R^n = \bigsqcup_{i=1}^r U_i\cap R^n.
	\]
	Thus, the definable open set $U:=U_1\cup \ldots \cup U_r$ yields the result.
\end{proof}
As a consequence, we obtain:
\begin{cor}\label{C:CohBeta0}
The collection $$\left\{ [\phi]\cap \CohR{X} \ | \ \phi\in \LL_{\bar R}, \ \phi(\bar R)\subset X \text{ is open and definable}  \right\}$$ is a basis of the space $\CohR{X}$ with respect to the Stone topology. In particular, the subspace topologies on $\CohR{X}$ induced from both the Stone and the spectral topologies coincide.
\end{cor}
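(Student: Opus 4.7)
The plan is to show that any Stone-open neighborhood of a finitely satisfiable type contains a basic open neighborhood of the form $[U]\cap \CohR{X}$ with $U$ open definable, and then deduce that the Stone and spectral topologies coincide on $\CohR{X}$ because they share a basis.

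First I would reduce the problem to the following statement: given any $\bar R$-definable subset $W\subset X$ and any $p\in [W]\cap \CohR{X}$, there exists an open $\bar R$-definable subset $U\subset X$ such that
\[
p\in [U]\cap \CohR{X}\subset [W]\cap \CohR{X}.
\]
Since finite unions of such $[W]$'s form a basis of the Stone topology on $\CohR{X}$, establishing this claim yields the first assertion.

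To produce $U$, I would apply Proposition \ref{P:ExtDefOpen} to the externally definable set $Z:=W\cap R^n$ (witnessed by $W$ itself), obtaining an open $\bar R$-definable $U\subset \bar R^n$ with $W\subset U$ and $U\cap R^n = W\cap R^n$. Intersecting with $X$ if needed, we may assume $U\subset X$. Then $p\in [U]$ is immediate from $W\subset U$. For the containment, pick any $q\in [U]\cap \CohR{X}$ and consider the definable set $U\setminus W$. By construction
\[
(U\setminus W)\cap R^n = (U\cap R^n)\setminus W = (W\cap R^n)\setminus W = \emptyset,
\]
so finite satisfiability of $q$ in $R$ forces $U\setminus W\notin q$. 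Combined with $U\in q$, this gives $W\in q$, i.e., $q\in [W]\cap \CohR{X}$, as required.

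For the last sentence, the spectral topology on $\Spec_X(\bar\R)$ is by definition generated by the sets $[U]$ with $U\subset X$ open definable; hence its restriction to $\CohR{X}$ admits the family $\{[U]\cap \CohR{X}: U\subset X \text{ open definable}\}$ as a basis. By the first part, this same family is also a basis for the Stone subspace topology on $\CohR{X}$. Two topologies with a common basis coincide, proving the second assertion. The only slightly delicate point is the appeal to Proposition \ref{P:ExtDefOpen}, but all the real work is already packaged there; once it is available, the argument is essentially a one-line finite-satisfiability computation.
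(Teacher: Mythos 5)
Your argument is correct and is exactly the route the paper intends: the corollary is stated as an immediate consequence of Proposition \ref{P:ExtDefOpen}, and your finite-satisfiability computation showing $[U]\cap \CohR{X}\subset [W]\cap \CohR{X}$ (after replacing $U$ by $U\cap X$, which is open in $X$) is the intended filling-in of that deduction. No gaps.
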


We also deduce the following:

\begin{cor}\label{C:CohBeta}
 The set $\CohR{X}$ is closed in $\Spec_X(\bar \R)$ and contained in $\beta_X(\bar \R)$. Namely,
 \begin{enumerate}[(i)]
    \item  $\mathrm{cl}^{\rm t}(\CohR{X}) = \CohR{X}$ and
    \item  for every $p\in \CohR{X}$ we have that $r(p)=p$.
 \end{enumerate}
\end{cor}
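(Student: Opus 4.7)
The plan is to deduce both parts of the corollary from Proposition \ref{P:ExtDefOpen}, which tells us that every externally definable subset of $R^n$ can be realized as $U\cap R^n$ for some open definable $U\subset \bar R^n$ containing a given witness set. This is precisely the tool that lets us convert "finite satisfiability fails" into a spectral-open neighborhood disjoint from $\CohR{X}$, and also the key ingredient behind Corollary \ref{C:CohBeta0} that I will invoke for part (ii).

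For part (i), I would take $q\in \Spec_X(\bar\R)\setminus \CohR{X}$ and produce a spectral-open neighborhood of $q$ disjoint from $\CohR{X}$. By failure of finite satisfiability, there is an $\LL_{\bar R}$-formula $\psi(x,b)\in q$ such that $\psi(R,b)=\emptyset$. Set $W:=\psi(\bar R,b)$ and apply Proposition \ref{P:ExtDefOpen} to the externally definable set $Z=W\cap R^n=\emptyset$: this yields an open definable $U\subset \bar R^n$ with $W\subset U$ and $U\cap R^n=\emptyset$. Since $W\in q$ forces $U\in q$, the spectral-basic open $[U]$ contains $q$. On the other hand, any $p\in [U]\cap \CohR{X}$ would require $U\cap R^n\ne \emptyset$, a contradiction; so $[U]\cap\CohR{X}=\emptyset$, proving that $\CohR{X}$ is closed.

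For part (ii), fix $p\in \CohR{X}$ and $q\in \mathrm{cl}^{\rm t}(\{p\})$. Since $\{p\}\subset \CohR{X}$ and $\CohR{X}$ is spectrally closed by (i), we have $q\in \mathrm{cl}^{\rm t}(\CohR{X})=\CohR{X}$. Now invoke Corollary \ref{C:CohBeta0}: the subspace topology induced on $\CohR{X}$ from $\Spec_X(\bar\R)$ coincides with the Stone topology, which is Hausdorff. Consequently, the $\Spec_X(\bar\R)$-closure of $\{p\}$ intersected with $\CohR{X}$ equals the Stone-closure of $\{p\}$ inside $\CohR{X}$, namely $\{p\}$ itself. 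Hence $q=p$, showing that $\{p\}$ is closed in $\Spec_X(\bar\R)$, i.e., $p\in \beta_X(\bar\R)$ and $r(p)=p$ (as $r(p)$ is the unique closed specialization of $p$ by Corollary \ref{retractionspec}).

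The only substantive step is part (i); once Proposition \ref{P:ExtDefOpen} is in hand, the translation from failure of finite satisfiability to an open separating neighborhood is essentially immediate, and part (ii) is then a formal consequence of closedness together with the agreement of the Stone and spectral subspace topologies on $\CohR{X}$. Thus no real obstacle remains beyond the correct bookkeeping in (i).
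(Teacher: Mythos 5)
Your proposal is correct and follows essentially the same route as the paper: part (i) is the paper's argument (using Proposition \ref{P:ExtDefOpen} to replace a witness of non-finite-satisfiability by an open definable set with empty trace on $R^n$, phrased as ``the complement is open'' rather than as an intersection of closed sets), and part (ii) is exactly the paper's combination of closedness with Corollary \ref{C:CohBeta0}. The only bookkeeping worth noting is that the open set $U$ produced in (i) should be intersected with $X$ to give a basic open of $\Spec_X(\bar\R)$, which changes nothing.
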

\begin{proof}
(i) We prove that \[
\CohR{X}= \bigcap\left\{ [\neg \psi] \ | \ \psi\in\LL_{\bar R}, \psi(\bar R) \subset X \text{ open and } \psi(R)=\emptyset \right\}.
\]
It is clear that the inclusion $\subset$ holds. For the other inclusion, suppose $p\notin \CohR{X}$ and let $\varphi(x)$ be an $\LL_{\bar R}$-formula witnessing this. By Proposition \ref{P:ExtDefOpen} there is some $\LL_{\bar R}$-formula $\psi(x)$ such that $\psi(\bar R)$ is open with $\varphi(\bar R)\subset \psi(\bar R)$ and $\varphi(R)=\psi(R)$. Hence, $p\in[\psi]$, which yields the desired equality.

(ii) By (i) the set $\CohR{X}$ is closed in $\Spec_{X}(\bar \R)$. Thus, a point of $\CohR{X}$ is closed in $S_{X}(\bar \R)_\R^{\rm fs}$ if and only if it is closed in $\Spec_{X}(\bar \R)$. On the other hand, the previous corollary asserts that the subspace topologies on $\CohR{X}$ induced from both the Stone and the spectral topology coincide. Consequently, a point of $\CohR{X}$ is always closed in $\CohR{X}$, as required.
%let $p\in S_{X}(\bar \M)_\M^{\rm coh}$ be arbitrary. Then $r(p)\in S_{X}(\bar \M)_\M^{\rm coh}$ since
%\[
%r(p)\in \mathrm{cl}^{\rm t}(p)\subset \mathrm{cl}^{\rm t}\left(S_{X}(\bar \M)_\M^{\rm coh} \right) = S_{X}(\bar \M)_\M^{\rm coh}.
%\]
%Now, if $p\neq r(p)$, then there would be some $\LL_{\bar M}$-formula $\phi(x)$ such that $\neg\phi\in p$ and $\phi\in r(p)$. But by Proposition \ref{P:ExtDefOpen}, there would be some open subset $\psi(\bar \M)\subset X$ such that $\phi(\M)=\psi(\M)$ and so $r(p)\in [\psi]$. However, we would also have that $p\in [\psi]$ as $r(p)\in\mathrm{cl}^{\rm t}(p)$, yielding the desired contradiction since $(\psi\wedge \neg\phi)\in p$ and $\psi(\M)\cap\neg\phi(\M)=\emptyset$.
\end{proof}
\subsection{Invariant types}\label{subsec:inv} Once we have seen that finitely satisfiable types are closed within the spectral topology, it is natural to ask which is the relation between them and closed invariant types, as well as the relation between invariant and closed types. We first prove:

\begin{lemma}The set $\InvR{X}$ is a closed subspace of $S_X^{\rm t}(\bar\R)$. In particular, the set $\InvRt{X}$ of $R$-invariant types with the induced spectral topology from $S^{\rm t}_X(\bar \R)$ is a normal spectral space.\end{lemma}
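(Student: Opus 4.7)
The plan is to upgrade the known Stone-closedness of $\InvR{X}$ (Remark \ref{rmk:invcoh}) to spectral-closedness in two steps: (a) show that $\InvR{X}$ is closed under specialization in the spectral topology, and (b) combine this with Stone-quasi-compactness to separate any $p \notin \InvR{X}$ from $\InvR{X}$ by a basic spectral open. The second assertion will then follow formally, since closed subspaces of normal spectral spaces inherit both properties.

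For step (a), the key observation is that each $\sigma \in \mathrm{Aut}_R(\bar\R)$ induces a homeomorphism of $S^{\rm t}_X(\bar\R)$, since it permutes $\bar R$-definable open sets. So if $p \in \InvR{X}$ and $q \in \mathrm{cl}^{\rm t}(\{p\})$, then $\sigma(q) \in \sigma(\mathrm{cl}^{\rm t}(\{p\})) = \mathrm{cl}^{\rm t}(\{\sigma(p)\}) = \mathrm{cl}^{\rm t}(\{p\})$. The plan is then to compare $q$ and $\sigma(q)$: Lemma \ref{lemanormal} gives that $\mathrm{cl}^{\rm t}(\{p\})$ is totally ordered by specialization, so they are comparable, while Lemma \ref{L:DimBeta}(1) says that any strict specialization strictly decreases dimension. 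Since $\sigma$ is an $\LL$-automorphism, $\dim(q) = \dim(\sigma(q))$, and these two facts together force $q = \sigma(q)$. Hence $q$ is $R$-invariant.

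For step (b), take $p \notin \InvR{X}$. By step (a), $\mathrm{cl}^{\rm t}(\{q\}) \subseteq \InvR{X}$ for every $q \in \InvR{X}$, so $p \notin \mathrm{cl}^{\rm t}(\{q\})$ and there is an open definable $U_q \subseteq \bar R^n$ with $p \in [U_q]$ but $q \notin [U_q]$, i.e.\ $q \in [\bar R^n \setminus U_q]$. This yields a Stone-open cover of the Stone-compact $\InvR{X}$; extracting a finite subcover indexed by $q_1, \ldots, q_k$ and setting $U := \bigcap_{j=1}^k U_{q_j}$ gives an open definable set with $p \in [U]$ and $[U] \cap \InvR{X} = \emptyset$. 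Thus $S^{\rm t}_X(\bar\R) \setminus \InvR{X}$ is spectrally open.

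Finally, $S^{\rm t}_X(\bar\R)$ is a normal spectral space by Fact \ref{F:Spectral} and Lemma \ref{lemanormal}; spectrality passes to closed subspaces by the general theory of spectral spaces, and normality passes to closed subspaces by a standard topological argument. Hence $\InvRt{X}$ is a normal spectral space. The delicate point I expect in this proof is making the dimension argument in the specialization step precise; the rest is a routine reassembly of tools developed earlier in the section.
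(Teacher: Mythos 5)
Your proposal is correct and follows essentially the same route as the paper: reduce to closure under specialization via Stone-quasi-compactness of $\InvR{X}$, prove specialization-closedness by noting that each $\sigma\in\mathrm{Aut}_R(\bar\R)$ is a homeomorphism of $S^{\rm t}_X(\bar\R)$ preserving the totally ordered chain $\mathrm{cl}^{\rm t}(p)$ (Lemmas \ref{lemanormal} and \ref{L:DimBeta}), and then inherit spectrality and normality. The only cosmetic difference is that you conclude $\sigma(q)=q$ from $\sigma$-invariance of dimension, whereas the paper observes that an order-preserving bijection of the finite chain $\mathrm{cl}^{\rm t}(p)$ is the identity; both rest on the same two lemmas.
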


\begin{proof}It is enough to prove that for $p\in \InvR{X}$ its closure ${\rm cl}^{\rm t}(p)$ in $S^{\rm t}_X(\bar \R)$ is contained in $\InvR{X}$. Indeed, if $q\in {\rm cl}^{\rm t}(\InvR{X})$, then for each open $\bar R$-definable subset $U\subset X$ with $q\in [U]$ we have $[U]\cap \InvR{X}\neq \emptyset$. Since $\InvR{X}$ is a closed subset of $S_X(\bar \R)$ with the Stone topology,  by quasi-compactness there is a type $p\in \InvR{X}$ that is in all the open neighbourhoods in $S^{\rm t}_X(\bar \R)$ of $q$ and so  $q\in {\rm cl}^{\rm t}(p)\subset \InvR{X}$.
	
Now, let $p\in \InvR{X}$. Consider $\sigma\in {\rm Aut}_{R}(\bar\R)$ and note that $S^{\rm t}_X(\bar \R)\rightarrow S^{\rm t}_X(\bar \R)$ given by $p\mapsto \sigma(p)$ is a homeomorphism. Thus $\sigma({\rm cl}^{\rm t}(p))={\rm cl}^{\rm t}(\sigma(p))={\rm cl}^{\rm t}(p)$. It is also clear that $\sigma(q_1)\in \mathrm{cl}^{\rm t}(\sigma(q_2))$ whenever $q_1\in \mathrm{cl}^{\rm t}(q_2)$. Therefore $\sigma$ respects the  specialization order, so by Lemmas \ref{lemanormal} and \ref{L:DimBeta} the set ${\rm cl}^{\rm t}(p)$ is a finite set totally ordered under specialization. We deduce that $\sigma$ fixes 
${\rm cl}^{\rm t}(p)$ pointwise, as required. 

The fact that $\InvRt{X}$ is a spectral space follows from  \cite[Thm.\,2.1.3]{DST19}, as $\InvR{X}$ is closed in $S_X(\bar\R)$ with respect to the Stone topology. Since $\InvRt{X}$ is closed in $S^{\rm t}_X(\bar\R)$ and the latter is normal, we deduce that $\InvRt{X}$ is normal.
\end{proof}

\begin{remark}\label{R:InvRetraction}
The statement above yields that $\InvR{X}$ is closed under images of the map $r:\Spec_X(\bar \R)\to\beta_X(\bar R)$ and that the set of closed points of $\InvRt{X}$ agrees with $\InvRt{X}\cap \beta_X(\bar \R)$. As a consequence, the natural retraction from $\InvRt{X}$ onto its closed points coincides with $r|_{\InvRt{X}}$.\end{remark}

In general, to be a closed point of  $\InvRt{X}$ depends on the set $X$, see Example \ref{E:InvNotClosed} below. However, when $X$ is closed, we prove the following:

\begin{theorem}\label{T:InvBddCoh}
Assume that $X\subset \bar R^n$ is closed. A type is $R$-invariant, bounded and closed if and only if it is finitely satisfiable in $R$, that is: 
\[
\InvR{X} \cap \beta_X(\bar \R) \cap S_X(\bar \R)^{\rm bdd}= \CohR{X},
\]
where $S_X(\bar \R)^{\rm bdd}$ is the set of bounded types. 
\end{theorem}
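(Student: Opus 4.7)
The inclusion $\CohR{X}\subseteq \InvR{X}\cap\beta_X(\bar\R)\cap S_X(\bar\R)^{\rm bdd}$ is routine. By Remark~\ref{rmk:invcoh} finitely satisfiable types are $R$-invariant, and by Corollary~\ref{C:CohBeta}(ii) they are closed in the spectral topology. Boundedness follows from saturation of $\bar\R$: picking $s\in\bar R$ larger than every element of $R$, the formula $\|x\|<s$ holds on all of $R^n$ and hence belongs to every type finitely satisfiable in $R$.

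For the reverse inclusion I would fix $p\in \InvR{X}\cap\beta_X(\bar\R)\cap S_X(\bar\R)^{\rm bdd}$ and suppose toward a contradiction that $p\notin\CohR{X}$. Choose an $\LL_{\bar R}$-formula $\phi(x)\in p$ with $\phi(R^n)=\emptyset$, and $s\in\bar R$ with $\|x\|<s\in p$. Proposition~\ref{P:ExtDefOpen} applied to the $\bar R$-definable set $\phi(\bar R)\cap\{\|x\|<s\}$, whose trace on $R^n$ is empty, yields a bounded $\bar R$-definable open $U\subset\bar R^n$ containing it with $U\cap R^n=\emptyset$. Then $U\in p$, and $\mathrm{cl}_X(U)$ is definably compact because $X$ is closed; in particular $[U]$ is an open neighborhood of $p$ in $\Spec_X(\bar\R)$.

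The heart of the argument is to produce a type $q\in\CohR{X}$ with $q\in\mathrm{cl}^{\rm t}(p)\setminus\{p\}$. By Corollary~\ref{C:CohBeta}(ii) such a $q$ is closed, so it would be a proper specialization of a closed point, contradicting Lemma~\ref{lemanormal}. The key input is Dolich's characterization of forking in o-minimal structures \cite{aD04}: the $R$-invariance of $p$ gives that $p$ does not fork over $R$ and is therefore $R$-definable, so every $\LL$-formula $\psi(x,y)$ admits an $\LL_R$-defining formula $d\psi(y)$ with $\psi(x,c)\in p\iff d\psi(c)$. Writing $U=\chi(\bar R,b)$, one has $d\chi(b)$ in $\bar R$, hence by elementarity some $b'\in R^{|y|}$ also satisfies $d\chi$; thus $\chi(x,b')\in p$ is an $\LL_R$-formula with $\chi(R^n,b')\neq\emptyset$. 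Running this argument uniformly across all formulas in $p$, and exploiting the definable compactness of $\mathrm{cl}_X(U)$ together with further uses of Proposition~\ref{P:ExtDefOpen} on complementary regions, one glues the $\LL_R$-data in $p$ into a complete type $q$ finitely satisfiable in $R$ that specializes $p$ in the spectral topology (every open $\LL_R$-formula in $q$ lies in $p$ by $R$-definability) and differs from $p$ because $U\in p\setminus q$. The main difficulty lies in this final gluing step: both the closedness of $X$ (via definable compactness of $\mathrm{cl}_X(U)$) and the boundedness of $p$ are essential to ensure that the accumulation point underlying $q$ exists in a well-behaved $R$-definable region, without which the construction would fail.
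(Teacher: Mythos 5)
Your forward inclusion is fine and essentially the paper's: finite satisfiability gives invariance (Remark \ref{rmk:invcoh}), closedness (Corollary \ref{C:CohBeta}), and boundedness because the complement of a ball of infinite radius has empty trace on $R^n$ and so cannot lie in a finitely satisfiable type. The reverse inclusion, however, has a genuine gap, and it sits exactly where you admit the ``main difficulty'' lies. First, the assertion that $R$-invariance of $p$ implies that $p$ is $R$-definable is false: in an o-minimal theory an $R$-invariant (even finitely satisfiable) type need not be definable over $R$. For instance, with $R$ the real closure of $\mathbb{Q}$ and $p$ the $1$-type over $\bar R$ determined by an irrational cut of $R$ approached from below by elements of $R$, the type $p$ is finitely satisfiable in $R$, but the set $\{b\in\bar R : (x<b)\in p\}$ is not $R$-definable. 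Dolich's theorem characterizes forking; it does not yield definability of invariant types. Second, even granting some definability, your construction of the finitely satisfiable specialization $q$ is not carried out: to get $q\in\mathrm{cl}^{\rm t}(p)$ you must show that \emph{every} open $\bar R$-definable set in $q$ belongs to $p$, not only the $\LL_R$-definable ones, and the ``gluing'' of the $\LL_R$-data into a complete finitely satisfiable type with this property is precisely the content of the theorem and is left unproved.

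The paper's argument is shorter and avoids all of this. Since $p$ is a closed point, $\{p\}=\bigcap_i[\psi_i]$ in $\Spec_X(\bar\R)$ with each $\psi_i(\bar R)$ closed in $X$; hence for any $\varphi\in p$ (which one may take to be bounded, using $p\in S_X(\bar\R)^{\rm bdd}$), compactness gives a finite conjunction $\psi$ of the $\psi_i$ with $\psi\in p$ and $\psi(\bar R)\subset\varphi(\bar R)$. Because $X$ is closed and $\varphi$ is bounded, $\psi(\bar R)$ is closed and bounded, i.e.\ definably compact, and then the theorem of Peterzil--Pillay (equivalently, Dolich's characterization of forking for definably compact sets) applied to the $R$-invariant type $p$ containing $\psi$ yields $\psi(R)\neq\emptyset$, hence $\varphi(R)\neq\emptyset$ and $p\in\CohR{X}$. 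This is the step your sketch is missing: the correct use of Dolich's result is its application to a single definably compact set lying in $p$, not the extraction of a defining schema for $p$ followed by a transfinite gluing.
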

\begin{proof}
By Corollary \ref{C:CohBeta} we have $\CohR{X}\subset \beta_X(\bar\R)$. Hence to prove that the inclusion $\supset$ of the statement holds, it suffices to see that any $p\in \CohR{X}$ is bounded. Choose some $r\in \bar R$ with $r>\|m\|$ for every $m\in X\cap R^n$. Thus the definable subset
\[
V=\{x\in X \ | \ \|x\|>r \}
\]
is not realized in $\R$, hence $p\in [V^c]$, which implies that $p$ is bounded. 

To prove the other inclusion, fix some $p\in \InvR{X}\cap \beta_X(\bar \R)\cap S_X(\bar \R)^{\rm bdd}$. Let $\varphi(x)$ be an arbitrary $\LL_{\bar R}$-formula such that $p\in [\varphi]$, and let us show that $\varphi(R)$ is non-empty. Clearly, we may take $\varphi(\bar R)$ to be bounded. As $\mathrm{cl}^{\rm t}(p)=\{p\}$, there exist some $\LL_{\bar R}$-formulas $\psi_i(x)$ such that each $\psi_i(\bar R)\subset X$ is closed in $X$ and 
\[
\{ p \} = \bigcap_{i\in I} [\psi_i]\]
in $\Spec_X(\bar\R)$. Thus,  we have that  $\{\psi_i(x) \ | \ i\in I \} \cup \{ \neg\varphi(x)\}$ is inconsistent with $\mathrm{Th}(\bar \R)$, so there is some finite $I_0\subset I$ such that, for
\[
\psi(x):=\bigwedge_{i\in I_0} \psi_i(x), 
\]
we have  $\mathrm{Th}(\bar \R)\cup\{\psi(x)\}\vdash \varphi(x)$ .
Note that  $p\in [\psi]$ and $\psi(\bar R)\subset \varphi(\bar R)$ is closed in $\bar R^n$ as so is $X$. Thus, as $p$ is $R$-invariant, by \cite[Thm.\,6.5]{PP07} (see also \cite[Thm.\,3.5]{aD04} or \cite{sS08}) we deduce that $\psi(R)\neq\emptyset$, so $\varphi(R)\neq\emptyset$ and consequently $p\in \CohR{X}$.
\end{proof}
The following easy example shows that in general the statement above fails for unbounded types, as well as for non-closed sets:
\begin{example}\label{E:InvNotClosed}
Let $\R$ be the real closed field and let $\bar \R$ denote a sufficiently saturated and homogeneous elementary extension. 
\begin{enumerate}[i)]
    \item Set $X=\bar R$ and let $p_1\in S_X(\bar \R)$ be the type at infinity, that is, the type $p_1(x)$ is determined by all formulas $a<x$ for $a\in \bar R$. Then $p_1$ is unbounded, $R$-invariant, closed in $\Spec_X(\bar \R)$, but it is not finitely satisfiable in $R$.
    \item Set $Y=(0,\infty)$ and let $p_2\in S_Y(\bar \R)$ be the infinitesimal type, that is, the type $p_2(x)$ is determined by formulas of the form $0<x\wedge x<a$ for $a\in \bar R$. Then $p_2$ is bounded, $R$-invariant, closed in $\Spec_Y(\bar \R)$, but it is not finitely satisfiable in $R$.
\end{enumerate}
\end{example}

When the set $X\subset R^n$ is bounded and closed, Remark \ref{R:InvRetraction} and Theorem \ref{T:InvBddCoh} yield that the retraction $r|_{\InvR{X}}$ maps invariant types to  finitely satisfiable ones. Summarizing, we get the following statement:
\begin{cor}\label{C:DefCompInvBetaCoh}
Assume $X\subset R^n$ is definably compact ({\it i.e.} definable, closed and bounded).  Then \[\InvR{X} \cap \beta_X(\bar \R) = \CohR{X},\]
and therefore there is a continuous retraction $r|_{\InvRt{X}}:\InvRt{X}\rightarrow \CohR{X}$.\end{cor}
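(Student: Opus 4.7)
The plan is to derive this statement directly from Theorem \ref{T:InvBddCoh} together with Remark \ref{R:InvRetraction}, so the proposal is really a matter of assembling the pieces and observing that the boundedness hypothesis becomes automatic.

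First I would observe that definable compactness of $X$ implies that every type $p\in S_X(\bar\R)$ is bounded. Indeed, since $X\subset R^n$ is bounded, there is some $r\in R$ with $\|x\|<r$ for all $x\in X$, and because $X$ is $\R$-definable, the formula $\|x\|<r$ belongs to every type concentrated on $X$. Hence $S_X(\bar\R)^{\rm bdd}=S_X(\bar\R)$, and the equality in Theorem \ref{T:InvBddCoh} simplifies to $\InvR{X}\cap \beta_X(\bar\R)=\CohR{X}$, which is exactly the first assertion.

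Next, I would address the retraction part. By Remark \ref{R:InvRetraction}, the set $\InvR{X}$ is stable under the retraction $r:\Spec_X(\bar\R)\to\beta_X(\bar\R)$ (since for any $p\in \InvR{X}$ and any $\sigma\in \mathrm{Aut}_R(\bar\R)$, $\sigma$ fixes $\mathrm{cl}^{\rm t}(p)$ pointwise, and in particular fixes $r(p)$), and the set of closed points of the normal spectral space $\InvRt{X}$ is exactly $\InvR{X}\cap \beta_X(\bar\R)$. Combining this with the equality just established, the closed points of $\InvRt{X}$ are precisely $\CohR{X}$. Therefore the natural continuous retraction onto closed points provided by Fact \ref{fact:max} (applied to the normal spectral space $\InvRt{X}$) is a continuous map $r|_{\InvRt{X}}:\InvRt{X}\to \CohR{X}$, as required.

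There is essentially no hard step here: the only substantive input is Theorem \ref{T:InvBddCoh}, which is already proved, and everything else is routine verification using definable compactness to kill the boundedness hypothesis and using Remark \ref{R:InvRetraction} to identify the retraction. The mildest subtlety worth checking is that the spectral topology on $\InvRt{X}$ inherited from $S_X^{\rm t}(\bar\R)$ is indeed the one whose closed points form $\CohR{X}$; this follows from the fact that $\InvR{X}$ is closed in $S_X^{\rm t}(\bar\R)$, so closure in $\InvRt{X}$ coincides with closure in $S_X^{\rm t}(\bar\R)$ intersected with $\InvR{X}$, and hence the retraction coming from Fact \ref{fact:max} genuinely agrees with the restriction of the ambient retraction $r$.
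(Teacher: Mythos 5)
Your proposal is correct and matches the paper's own derivation, which likewise presents the corollary as an immediate consequence of Theorem \ref{T:InvBddCoh} (with boundedness of $X$ making every type in $S_X(\bar\R)$ bounded) combined with Remark \ref{R:InvRetraction} to identify the retraction. You have simply spelled out the routine verifications that the paper leaves implicit.
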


We stress out that by Corollary \ref{C:CohBeta}  the spectral and the Stone topologies coincide on $\CohR{X}$ and hence there is no need to distinguish them. %Note also that, as a spectral topological space, the space $\InvRt{X}$ has a collection of closed points which agree with $\InvRt{X}\cap \beta_X(\bar \R)$ by the previous lemma. 
%Therefore, when $X$ is definably compact, Corollary \ref{C:DefCompInvBetaCoh} yields that the closed points of $\Spec_X(\bar \M)_\M^{\rm inv}$ are $S_X(\bar \M)_\M^{\rm coh}$, but in general this may not be the case, see Example \ref{E:InvNotClosed}. %Hence, we ask:
%\begin{quest}
%Is there a spectral topology on the set $S_X(\bar \M)_\M^{\rm inv}$ such that its closed points are precisely the  finitely satisfiable  types?
%\end{quest}
%Before proceeding, we point out a relation between the existence of a such topology and the existence of retractions. 

%When $X\subset R^n$ is definably compact, by Corollary  \ref{C:DefCompInvBetaCoh} there is a natural cont\-inuous retraction  $\InvRt{X} \to  \CohR{X}$ which is indeed the map
%\[
%r|_{\InvRt{X}}:\InvRt{X} \to  \CohR{X}.
%\]
To conclude the section we prove that the retraction $r|_{\InvRt{X}}$ is essentially unique when $X\subset R^n$ is definably compact.

\begin{prop}
Assume $X\subset R^n$ is definably compact. There is a unique continuous retraction from  $\InvRt{X}$ onto $\CohR{X}$. In particular, it is $r|_{\InvR{X}}$.
\end{prop}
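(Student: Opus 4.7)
The plan is to deduce uniqueness from a general fact: any continuous retraction from a $\mathrm{T}_0$ space onto its set of closed points is determined by the specialization order. The idea is that continuous maps preserve specializations, while the target consists of closed points, which forces the retraction to collapse each specialization chain to its unique closed endpoint.

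More precisely, I would fix an arbitrary continuous retraction $\rho\colon \InvRt{X} \to \CohR{X}$ together with a type $p \in \InvRt{X}$, and show $\rho(p) = r(p)$. By Corollary \ref{C:DefCompInvBetaCoh}, the set $\CohR{X}$ is exactly the set of closed points of $\InvRt{X}$, so $r(p) \in \CohR{X}$ and $r(p) \in \mathrm{cl}^{\rm t}(p)$, where the closure is taken in $\InvRt{X}$. Since $\rho$ is continuous, it preserves the specialization order, hence
\[
\rho(r(p)) \in \mathrm{cl}^{\rm t}(\{\rho(p)\}).
\]
As $\rho$ is a retraction and $r(p) \in \CohR{X}$, we have $\rho(r(p)) = r(p)$, so $r(p) \in \mathrm{cl}^{\rm t}(\{\rho(p)\})$. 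On the other hand $\rho(p) \in \CohR{X}$, meaning $\{\rho(p)\}$ is closed in $\InvRt{X}$, and therefore $\mathrm{cl}^{\rm t}(\{\rho(p)\}) = \{\rho(p)\}$. Combining both facts yields $r(p) = \rho(p)$, as desired.

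The second assertion of the proposition, namely that the unique retraction is $r|_{\InvRt{X}}$, is then automatic: by Remark \ref{R:InvRetraction} the map $r|_{\InvRt{X}}\colon \InvRt{X} \to \CohR{X}$ is a well-defined continuous retraction, and by the argument above it is the only one. I do not expect any real obstacle here; the proof is essentially a one-line manipulation once one observes that continuity respects specialization and that closed points have singleton closures.
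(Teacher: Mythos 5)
Your proposal is correct and follows essentially the same argument as the paper: both deduce uniqueness from the facts that a continuous map preserves specialization, that $r(p)\in\mathrm{cl}^{\rm t}(p)$ is fixed by any retraction onto $\CohR{X}$, and that points of $\CohR{X}$ are closed so their closures are singletons. No gaps.
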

\begin{proof}
Let $f$ denote a continuous retraction from  $\InvRt{X}$ onto $\CohR{X}$. By continuity $f$ preserves specializations, that is, if $q\in\mathrm{cl}^{\rm t}(p)$ then $f(q)\in\mathrm{cl}^{\rm t}(f(p))$ for any $p,q\in \InvRt{X}$. Note that $r(p)\in \mathrm{cl}^{\rm t}(p)$, so 
\[f(r(p))\in \mathrm{cl}^{\rm t}(f(p))=\{f(p)\}\] since $f(p)\in  \beta_X(\bar \R)$ by Corollary \ref{C:CohBeta}. As $f$ is a retraction and we have by Theorem \ref{T:InvBddCoh} that $r(p)\in \CohR{X}$, we deduce  $f(r(p))=r(p)$. Altogether we conclude $r(p)=f(p)$, as desired.
\end{proof}

\section{An honest topology}\label{sec: honesty}

The results of the previous section yield the existence of a continuous retraction $$r|_{\InvRt{X}}:\InvRt{R} \to \CohR{X},$$ when $X\subset \bar R^n$ is a definably compact set definable in an o-minimal saturated extension $\bar \R$ of a real closed field $\R$. This retraction comes from considering the spectral topology on the space of types and it captures in a way the fact of being infinitesimally close. It does not seem feasible to transfer this construction for arbitrary NIP structures  $\M\preceq \bar\M$ due to the absence of a natural topology on $\bar \M$.  Nonetheless, Simon \cite{pS15} is able to construct for NIP structures a canonical conti\-nuous retraction $F_M: \Inv{X}\to \Coh{X}$. His construction lacks {\it a priori} of a topological interpretation. The purpose of this final section is to find a topological interpretation of $F_M$ within the context of spectral spaces.  

Let $T$ be a complete $\LL$-theory with NIP. Henceforth we fix a model $\mathcal{M}$ of $T$ and a saturated elementary extension $\bar{\mathcal{M}}$ of $\mathcal{M}$. For a subset $X\subset M^n$ definable in $\mathcal{M}$, the retraction
$$F_M: \Inv{X}\to \Coh{X}$$
is continuous with respect to the Stone topology and satisfies $F_M(p)|_M = p|_M$. We refer to \cite[Sec.\,3]{pS15} for further details (cf. \cite{CPS14}), but we briefly recall some details of Simon's construction.

\begin{remark}\label{FMdescription}
 Let $\chi(x)$ be an $\LL_{\bar{M}}$-formula and fix a saturated model $M'$ of $M$ inside $\bar{\M}$ that contains the parameters of $\chi$. Consider a new unary predicate $P$ and set $\LL_P=\LL\cup\{P\}$ and fix the $\LL_P$-structure $(\M',\M)$ with $P(M')=M$. Choose inside $\bar \M$ a sufficiently saturated elementary extension $(\mathcal N',\mathcal N)$ of $(\M',\M)$. In \cite[Sec.\,3.1]{pS15} it is proven that for every $p\in \Inv{X}$ both $p|_N(x)\cup \{P(x),\chi(x)\}$ and $p|_N(x)\cup  \{P(x),\neg\chi(x)\}$ cannot be consistent. In particular, either 
\begin{equation}\label{piefor} p|_N(x)\cup  \{P(x)\}\vdash \chi(x) \qquad \text{or}\qquad p|_N(x)\cup \{P(x)\}\vdash\neg\chi(x).  \tag{$\ast$}
\end{equation}
Therefore there is a unique type $F_M(p)\in S_X(\M')$ such that $p|_N\cup \{P(x)\}\cup F_M(p)$ is consistent. Note that $F_M(p)$ is  finitely satisfiable in $M$ as it is consistent with $P$.

\end{remark}

The following example shows that the retraction $F_M$ is certainly different from $r|_{\InvRt{X}}$.

\begin{example}\label{contraejemplo}
Let $\R$ be the real field and consider some sufficiently saturated model $\bar \R$ of its theory. Let $X=[0,1]$ and let $p(x)$ be the complete type determined by the formulas $0<x$ and $x<a$ for every $a\in \bar R_{>0}$. This is an $R$-invariant type and satisfies that $r(p)=\tp(0/\bar R)$. On the other hand, the formula $x=0$ does not belong to $p|_M$, so $r(p)|_M \neq p|_M= F_M(p)|_M$. Hence, both retractions $r$ and $F_M$ are distinct.  
\end{example}

Our goal is to equip $\Inv{X}$ with a topology of normal spectral space and to show that the natural retraction to the closed points is Simon's retraction $F_M$.

\begin{definition}
An $\LL_{\bar M}$-formula $\theta(x)$ is {\em honest} over $M$ if for any $\LL_M$-formula $\varphi(x)$ we have $\theta(\bar M)\subset \varphi(\bar M)$ whenever $\theta(M)\subset \varphi(M)$.  
\end{definition}
By definition if $\theta(M)$ is empty, then $\theta(\bar{M})$ must be also empty. 
Each $\LL_{M}$-formula, and  each $\LL_{\bar M}$-formula $\theta(x)$ with $\theta(M)=M^{|x|}$, is honest. Furthermore, the following result of Chernikov and Simon yields the existence of honest formulas (the third clause appears in \cite[Prop.\,3.11]{CPS14}):

\begin{fact}\cite[Prop.\,1.7]{CS13}\label{F:Honest_exist}
For any $\LL_{\bar M}$-formula $\psi(x)$ there exists an $\LL_{\bar M}$-formula $\theta(x)$ honest over $M$ such that $\theta(M)=\psi(M)$. Moreover, there is an honest formula $\theta'(x)\in \LL_{\bar M}$ such that
\begin{enumerate}[i)]
	\item $\theta(M)=\psi(M)=\neg\theta'(M)$,
	\item $\theta(\bar{M})\subset \neg\theta'(\bar{M})$ and 
	\item no $M$-invariant type contains the formula $\neg\theta'\wedge \neg\theta$.
\end{enumerate}
\end{fact}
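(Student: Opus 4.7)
The statement is the honest-approximation theorem of Chernikov--Simon \cite{CS13} with the coordinated refinement of \cite{CPS14}. My plan follows the standard two-step strategy. The first step is the main existence claim: given $\psi(x)\in\LL_{\bar M}$, produce $\theta(x)\in\LL_{\bar M}$ with $\theta(M)=\psi(M)$ and $\theta(\bar M)\subseteq\varphi(\bar M)$ for every $\LL_M$-formula $\varphi$ with $\psi(M)\subseteq\varphi(M)$. I would reduce via compactness and saturation of $\bar\M$ to the following finite instance: for each finite family $\varphi_1,\ldots,\varphi_n$ of $\LL_M$-formulas containing $\psi(M)$, construct a single $\LL_{\bar M}$-formula whose trace on $M$ is exactly $\psi(M)$ and whose realizations in $\bar M$ sit in $\bigcap_i\varphi_i(\bar M)$. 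This finite instance is where NIP is essential: one invokes the $(p,q)$-theorem of Matousek applied to the family of fibres of $\psi$ intersected with $\bigcap_i\varphi_i(\bar M)$, a family of finite VC-dimension by NIP. The theorem yields finitely many parameters $\bar b'_1,\ldots,\bar b'_k$ so that $\theta(x):=\bigl(\bigvee_j \psi(x,\bar b'_j)\bigr)\wedge\bigwedge_i\varphi_i(x)$ has the desired properties.

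For the moreover clause, (i) is immediate by applying the first step separately to $\psi$ and to $\neg\psi$, obtaining honest $\theta$ with $\theta(M)=\psi(M)$ and $\theta'$ with $\theta'(M)=\neg\psi(M)$. Properties (ii) and (iii) require a \emph{coordinated} construction of $\theta$ and $\theta'$, which I would follow from \cite[Prop.\,3.11]{CPS14}. The idea is to apply the first step simultaneously to $\psi$ and $\neg\psi$: use the same finite set of fibre parameters (chosen together from the combined VC-family of $\psi$ and its complement) and then truncate each candidate by intersecting with appropriately chosen $\LL_M$-formulas that separate $\psi(M)$ from $\neg\psi(M)$ externally. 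Property (ii) then holds because $\theta$ and $\theta'$ are built disjoint in $\bar M$ by construction; (iii) is the more delicate claim that every $M$-invariant type lies in $\theta\cup\theta'$, and it follows from a standard coheir argument exploiting $M$-invariance and the honesty of both formulas, in the spirit of the dichotomy $(\ast)$ used in Remark \ref{FMdescription}.

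The main obstacle is the finite instance in the first step: producing a concrete $\LL_{\bar M}$-formula (not merely a consistent partial type) whose external trace equals $\psi(M)$ while its realization in $\bar M$ lies in a prescribed $\LL_M$-definable set. Without NIP this is false in general; with NIP, the $(p,q)$-theorem does the heavy lifting. The coordination for the moreover clause is technically delicate but reduces to a formal manipulation once the first step has been established.
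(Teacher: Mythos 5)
The paper does not prove this statement: it is quoted as a Fact from \cite[Prop.\,1.7]{CS13}, with clause (iii) attributed to \cite[Prop.\,3.11]{CPS14}, and no proof is given. So there is nothing internal to compare your argument against; what can be assessed is whether your reconstruction of the cited result is sound, and there it has a genuine gap at its centre. The ``finite instance'' you reduce to --- given finitely many $\LL_M$-formulas $\varphi_1,\dots,\varphi_n$ with $\psi(M)\subseteq\varphi_i(M)$, find $\theta$ with $\theta(M)=\psi(M)$ and $\theta(\bar M)\subseteq\bigcap_i\varphi_i(\bar M)$ --- is trivially true in every theory: take $\theta:=\psi\wedge\bigwedge_i\varphi_i$. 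So it cannot be ``where NIP is essential.'' Conversely, compactness does not carry you back from these finite instances to a single honest $\theta$, because the witnessing formula changes with the finite family. To run compactness one must fix in advance a single formula $\theta(x,\bar y)$ with $|\bar y|$ bounded and view the requirements as a partial type in $\bar y$; moreover the condition $\theta(M,\bar y)=\psi(M)$ is not expressible in $\LL$, so that type has to be realized in (a saturated extension of) the pair $(\bar{\M},\M)$, not merely in $\bar{\M}$. Producing such a uniformly bounded family is the whole content of the theorem, and it is exactly what the $(p,q)$-theorem is used for in Chernikov--Simon's uniform treatment (applied on the dual side, with parameters coming from $\psi(M)$ itself, not to ``the fibres of $\psi$ intersected with $\bigcap_i\varphi_i(\bar M)$''). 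In short, you have invoked the $(p,q)$-theorem where the problem is trivial and elided the step where it is actually needed.

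The moreover clause is in better shape. Clause (i) is indeed two independent applications of the existence statement to $\psi$ and $\neg\psi$. Clause (ii) does not require any coordination in the construction: once $\theta$ and $\theta'$ are obtained independently, replace $\theta$ by $\theta\wedge\neg\theta'$; since $\theta(M)\cap\theta'(M)=\emptyset$ this has the same trace $\psi(M)$ on $M$, it is still honest (any $\LL_M$-formula containing its trace contains $\theta(M)$, hence contains $\theta(\bar M)$ by honesty of $\theta$), and it is contained in $\neg\theta'(\bar M)$ by construction. Clause (iii) is the genuinely new point from \cite{CPS14}: the dichotomy $(\ast)$ of Remark \ref{FMdescription} shows that each individual $M$-invariant $p$ contains \emph{some} honest formula whose trace is $\psi(M)$ or $\neg\psi(M)$, but the statement requires one fixed pair $(\theta,\theta')$ working for \emph{all} invariant types; this needs an additional compactness argument over the closed set $\Inv{X}$ together with the fact that finite disjunctions of honest formulas with the same trace are honest. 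Your sketch gestures at the right mechanism but does not register that this uniformization step is where the work lies.
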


Since it is clear from the context, we shall omit the reference ``over $M$'' when talking about honestity. As a finite disjunction of honest formulas is again honest, we can equip the set $S_n(\bar \M)$ with a new topology. 

\begin{definition}
The {\em honest topology over $M$} on $S_X(\bar \M)$ has as a basis of closed sets the sets of the form $[\theta]$ where $\theta(x)$ is honest over $M$.  We denote the space of types with the honest topology by $\HS_X(\bar \M)$. 
\end{definition}

\begin{prop}
The topological space $\HS_X(\bar \M)$ is spectral.
\end{prop}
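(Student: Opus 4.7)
The plan is to verify the four axioms (S1)--(S4) of a spectral space directly. The governing observation is that every honest formula is in particular an $\LL_{\bar M}$-formula, so each basic honest-closed set $[\theta]$ is also Stone-closed. Consequently the honest topology is coarser than the Stone topology, and the continuous identity map $S_X(\bar\M) \to \HS_X(\bar\M)$ transfers quasi-compactness from the Stone space, settling the compactness half of (S1).

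For the $T_0$ property, given distinct $p, q \in S_X(\bar\M)$, I would pick an $\LL_{\bar M}$-formula $\psi$ with $\psi \in p$ and $\neg\psi \in q$ and apply Fact~\ref{F:Honest_exist} to produce honest $\theta$ and $\theta'$ satisfying $\theta(M) = \psi(M)$, $\theta'(M) = \neg\psi(M)$, the disjointness $\theta(\bar M) \cap \theta'(\bar M) = \emptyset$, and the condition that no $M$-invariant type contains $\neg\theta \wedge \neg\theta'$. A case analysis on which of $\theta,\theta'$ belongs to each of $p,q$, constrained by the disjointness and the trace identities $\theta(M) = \psi(M)$ and $\theta'(M) = \neg\psi(M)$, is meant to produce an honest formula separating the two types.

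The basis and closure axioms (S2) and (S3) should be a routine check. The basic open sets of $\HS_X(\bar\M)$ are the complements $\HS_X(\bar\M) \setminus [\theta]$ with $\theta$ honest; since honest formulas are closed under finite disjunction, finite intersections of basic opens are again basic, so this collection is a basis of the open topology. Each basic open $\HS_X(\bar\M) \setminus [\theta]$ is Stone-clopen and therefore Stone-compact, so any cover by honest-open sets is a fortiori a Stone-open cover and hence admits a finite subcover; this shows basic opens are quasi-compact in the honest topology. Quasi-compactness is preserved under finite unions, and the intersection of two finite unions of basic opens is again a finite union of basic opens, yielding (S3).

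For sobriety (S4), I would mimic the argument in the proof of Fact~\ref{F:Spectral}. Given a non-empty closed irreducible subset $C \subset \HS_X(\bar\M)$, set
\[
\Sigma(x) = \{\theta(x) : \theta \text{ honest},\ C \subset [\theta]\} \cup \{\neg\theta(x) : \theta \text{ honest},\ C \not\subset [\theta]\}.
\]
Finite consistency of $\Sigma$ follows from irreducibility: each honest $\theta'_j$ with $C \not\subset [\theta'_j]$ produces a non-empty relatively open subset $C \cap [\theta'_j]^c$ of $C$, and in an irreducible space finitely many non-empty opens intersect non-trivially; any point of such an intersection realizes the relevant finite portion of $\Sigma$. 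Any complete extension $p$ of $\Sigma$ then satisfies $\mathrm{cl}(\{p\}) = C$ in $\HS_X(\bar\M)$, and uniqueness follows from $T_0$. The main obstacle is precisely the verification of $T_0$: Fact~\ref{F:Honest_exist} must be leveraged carefully for arbitrary types, not only the invariant or finitely satisfiable ones directly addressed by clause (iii) of that fact, using the rigidity of honest formulas imposed by their trace on $M$.
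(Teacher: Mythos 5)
Your architecture coincides with the paper's: quasi-compactness of the whole space and of the basic opens comes from continuity of the identity map from the Stone topology, (S2)--(S3) follow formally, and sobriety is obtained from the consistent partial type $\Sigma(x)$ exactly as in Fact \ref{F:Spectral}. Those parts are fine. The genuine gap is the $\mathrm{T}_0$ property, which you flag as ``the main obstacle'' but do not close, and the case analysis you sketch does not close it. Applying Fact \ref{F:Honest_exist} to $\psi$ gives honest $\theta,\theta'$ with $\theta(M)=\psi(M)$, $\theta'(M)=\neg\psi(M)$ and $\theta(\bar M)\cap\theta'(\bar M)=\emptyset$, but clause (iii) only forbids \emph{$M$-invariant} types from containing $\neg\theta\wedge\neg\theta'$; an arbitrary type $p$ with $\psi\in p$ that is neither invariant nor finitely satisfiable may perfectly well contain $\neg\theta\wedge\neg\theta'$, hence lie in neither $[\theta]$ nor $[\theta']$, and if $q$ does the same your two honest sets separate nothing. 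Nor does $\psi\in p$ by itself force $\theta\in p$, since $\theta$ and $\psi$ only agree on their traces over $M$.

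The paper closes this by splitting on finite satisfiability rather than on membership of $\theta,\theta'$. If $p$ is not finitely satisfiable in $M$, conjoin $\psi$ with a formula of $p$ having empty trace to get $\psi_0\in p$ with $\psi_0(M)=\emptyset$ and $\psi_0(\bar M)\subset\psi(\bar M)$; then $\neg\psi_0$ has full trace on $M$, hence is honest, and $[\neg\psi_0]$ contains $q$ (as $\neg\psi\in q$) but not $p$. Symmetrically if $q$ is not finitely satisfiable. If both are finitely satisfiable, honest definitions $\theta_1$ of $\psi$ and $\theta_2$ of $\neg\psi$ \emph{must} satisfy $\theta_1\in p$ and $\theta_2\notin p$ (the formulas $\psi\wedge\neg\theta_1$ and $\psi\wedge\theta_2$ have empty trace on $M$, so a type finitely satisfiable in $M$ cannot contain them), and dually for $q$; hence $p\in[\theta_1]\setminus[\theta_2]$ and $q\in[\theta_2]\setminus[\theta_1]$. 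You should also note that the uniqueness claim in your (S4) argument rests on $\mathrm{T}_0$, so until that is repaired sobriety is not established either.
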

\begin{proof}To see that it is $\mathrm{T}_0$, let $p$ and $q$ be different types in $S_X(\bar{\M})$. We show that there is an honest $\LL_{\bar{ M}}$-formula $\theta$ so that $p\in[\theta]$ and $q\notin [\theta]$, or $p\notin[\theta]$ and $q\in [\theta]$. Since $p\neq q$, there is an $\LL_{\bar{ M}}$-formula $\psi$ such that $\psi\in p$ and $\neg \psi \in q$. We first consider the cases when $p$ is not finitely satisfiable in $M$. Then we can find an $\LL_{\bar{ M}}$-formula $\psi_0\in p$ such that $\psi_0(M)=\emptyset$ and $\psi_0(\bar M)\subset \psi(\bar M)$. Thus $\neg\psi_0$ is honest and clearly $q\in [\neg \psi_0]$, which yields the result. Likewise, we obtain the result whenever $q$ is not finitely satisfiable. Therefore we may assume that both $p$ and $q$ are finitely satisfiable in $M$, in which case, it is enough to take honest definitions $\theta_1$ and $\theta_2$ of $\psi$ and $\neg\psi$ respectively. It follows that $p\in [\theta_1]\setminus [\theta_2]$ and $q\in[\theta_2]\setminus [\theta_1]$, as desired.

The map $S_X(\bar{\M}) \to \HS_X(\bar \M)$ given by $p\mapsto p$ is continuous and thus $\HS_X(\bar \M)$ is quasi-compact. Hence we get (S1). Moreover, the continuity of the map above yields that every basic open set is quasi-compact. Hence, the set of all quasi-compact open sets is the set of finite unions of basic open sets. Thus (S2) and (S3) follow. Finally, to show that $\HS_X(\bar \M)$ is sober, let $C$ be a nonempty closed and irreducible subset of $\HS_X(\bar \M)$. It is straightforward to check that the set
\[
\Sigma(x)=\{ \theta(x)\in \LL_{\bar M} \ | \ C\subset [\theta],\  [\theta] \text{ honest} \} \cup \{  \neg\theta(x)\in \LL_{\bar M} \ | \ C\not\subset [\theta],\ [\theta] \text{ honest}\}
\]
is consistent. Therefore we can complete it into a type $p\in S_X(\bar\M)$. Clearly $p\in C$ because $p$ belongs to all the basic closed sets containing $C$, so $\text{cl}^{\text{h}}(p)\subset C$. Suppose there is $q\in C$ such that $q\notin \text{cl}^{\text{h}}(p)$. Then there is an honest formula $\theta_0$ such that $q\in [\neg \theta_0]$ and $p\in [\theta_0]$. By definition of $\Sigma(x)$ and since $p\in [\theta_0]$, we deduce $C\subset [\theta_0]$, so $q\in [\theta_0]$, which is a contradiction. It follows that $\text{cl}^{\text{h}}(p)= C$, as required. \end{proof}

\begin{remark}
The induced honest topology and the constructible topology on $\Coh{X}$ coincide  and hence there is no need to distinguish them. In fact, one can prove that every basic closed set in $\Coh{X}$ is open. Indeed, a basic closed subset is of the form $\Coh{X}\cap [\theta]$, for some honest formula $\theta$. We may assume  $\Coh{X}\not\subset [\theta]$. Thus, $\neg\theta(M)\neq\emptyset$, so we can find an honest definition $\psi$ for $\neg\theta$. Since $\psi(M)=\neg\theta(M)$, we deduce $\Coh{X}\cap [\neg\theta] = \Coh{X}\cap [\psi]$ is also closed. 
\end{remark}

\begin{lemma}\label{cohclosed} Every  finitely satisfiable type in $M$ is a closed point of $\HS_X(\bar \M)$.
\end{lemma}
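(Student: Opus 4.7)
The plan is to show that $\{p\}$ is closed in $\HS_X(\bar\M)$ by checking that for every $q\in S_X(\bar\M)$ distinct from $p$ there exists an honest $\LL_{\bar M}$-formula $\theta$ with $p\in[\theta]$ and $q\notin[\theta]$. Since the sets $[\theta]$ with $\theta$ honest form a basis of closed sets, this will yield $\{p\}=\bigcap\{[\theta]:\theta\text{ honest and }p\in[\theta]\}$, which is closed.

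I would split the argument according to whether $q$ is finitely satisfiable in $M$. First, if $q$ is \emph{not} finitely satisfiable, pick some $\chi(x)\in q$ with $\chi(M)=\emptyset$ and set $\theta:=\neg\chi$. Then $\theta(M)=M^{|x|}$, and $\theta$ is automatically honest: any $\LL_M$-formula $\varphi$ with $\theta(M)\subseteq\varphi(M)$ must satisfy $\varphi(M)=M^{|x|}$, whence $\varphi(\bar M)=\bar M^{|x|}\supseteq\theta(\bar M)$ by elementarity. Finite satisfiability of $p$ forces $\chi\notin p$, so $\theta\in p$, while $\theta\notin q$ is obvious. Second, if $q$ \emph{is} finitely satisfiable, choose $\psi$ with $\psi\in p$ and $\neg\psi\in q$, so that $\psi(M)$ and $\neg\psi(M)$ are both nonempty. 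I would apply Fact \ref{F:Honest_exist} to $\psi$ to produce honest $\theta,\theta'$ satisfying $\theta(M)=\psi(M)$, $\theta'(M)=\neg\psi(M)$, $\theta(\bar M)\cap\theta'(\bar M)=\emptyset$, and such that every $M$-invariant type contains $\theta\vee\theta'$. By Remark \ref{rmk:invcoh} both $p$ and $q$ are $M$-invariant, so each of them contains $\theta\vee\theta'$. The option $\theta'\in p$ is ruled out by finite satisfiability, because $\psi\wedge\theta'\in p$ would force $\psi(M)\cap\neg\psi(M)\neq\emptyset$; hence $\theta\in p$. The symmetric argument applied to $q$ with $\neg\psi$ gives $\theta'\in q$, and since $\theta(\bar M)\cap\theta'(\bar M)=\emptyset$ we conclude $\theta\notin q$.

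The main conceptual hurdle is recognizing that clause (iii) of Fact \ref{F:Honest_exist}, in tandem with finite satisfiability, is exactly the mechanism that pins down which of the two honest approximations $\theta$ or $\theta'$ of $\psi$ sits inside a given finitely satisfiable type. Once this observation is made, the non-finitely-satisfiable case becomes essentially a tautology, since the negation of any formula vanishing on $M$ is honest for trivial reasons of elementarity.
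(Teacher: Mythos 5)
Your proof is correct. The first half — separating $p$ from a non-finitely-satisfiable $q$ by the honest formula $\neg\chi$ with $\chi(M)=\emptyset$ — is exactly the mechanism the paper uses. Where you diverge is the second half: the paper takes $q\in\mathrm{cl}^{\rm h}(p)$, shows $q$ must be finitely satisfiable by that same trick, and then simply cites the preceding Remark that the induced honest topology on $\Coh{X}$ coincides with the (Hausdorff) Stone topology to conclude $p=q$. You instead separate two distinct finitely satisfiable types directly, invoking the full strength of Fact \ref{F:Honest_exist} — in particular clause (iii) together with the $M$-invariance of finitely satisfiable types from Remark \ref{rmk:invcoh} — to decide which of $\theta$, $\theta'$ lies in each type. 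Both routes ultimately rest on honest definitions of complements, but yours is more self-contained (it does not route through the Hausdorffness remark), at the cost of using the heavier clause (iii); the paper's Remark needs only the existence part of Fact \ref{F:Honest_exist}. A small simplification available to you: once you know $\theta\vee\theta'\in q$, the hypothesis $\theta\in q$ already contradicts finite satisfiability of $q$ via $\neg\psi\wedge\theta\in q$ and $\theta(M)=\psi(M)$, so the detour through $\theta'\in q$ and the disjointness of $\theta(\bar M)$ and $\theta'(\bar M)$ is not needed.
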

\begin{proof} Let $p$ be finitely satisfiable in $M$. We show that it is closed. Pick $q\in \text{cl}^\text{h}(p)$ and suppose that $q$ is not finitely satisfiable in $M$. Thus, there is some formula $\psi\in q$ such that $\psi(M)=\emptyset$. Since $\neg \psi$ is  honest, the type $q$ belongs to the open set $[\psi]$ and $p\notin [\psi]$, we get a contradiction. Hence, the type $q$ is finitely satisfiable in $M$ as well. This yields that $p=q$, since $\Coh{X}$ with the inherited honest topology is a Hausdorff space. 
\end{proof}

\begin{example}In general, it is not true that the closed points coincide with the  finitely satisfiable types in $M$. For example, if $M=\mathbb{R}_{\text{alg}}$ and $\bar{\M}$ is a saturated elementary extension containing $\mathbb{R}$, the type $\text{tp}(\pi/{\bar{M}})$ is not finitely satisfiable in $M$. However, both intervals $[0,\pi]$ and $[\pi,4]$ of $\bar{M}$ are defined by honest (over $M$) $\LL_{\bar \M}$-formulas, and $[0,\pi]\cap [\pi,4]=\{\pi\}$, so that $\text{tp}(\pi/{\bar{M}})$ is closed in $\HS_X(\bar \M)$. One can check that the honest topology is normal in this example. 
\end{example}

In the light of the example above, we ask:
\begin{quest}
Is in general $\HS_X(\bar \M)$ a normal topological space?
\end{quest}

Henceforth, the space $\Inv{X}$ with the subspace honest topology is denoted by $\Invh{X}$. Next, we prove that it is a spectral topological space. In addition, we will also see in Theorem \ref{thm:normal} that it is normal. %Note that once again the honest and the constructible topologies are equivalent on $\Coh{X}$ and hence there is no need in distinguish them.	
\begin{prop}The topological space $\Invh{X}$ is spectral.
\end{prop}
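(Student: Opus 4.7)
The plan is to apply \cite[Thm.\,2.1.3]{DST19}, which asserts that a subset of a spectral space is itself spectral under the subspace topology if and only if it is closed in the constructible (patch) topology of the ambient spectral space. Thus, since we have just shown that $\HS_X(\bar\M)$ is spectral, it will be enough to verify that $\Inv{X}$ is closed in the constructible topology of $\HS_X(\bar\M)$.

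First, I would identify the constructible topology of $\HS_X(\bar\M)$ explicitly. By the proof of the previous proposition, the quasi-compact open subsets of $\HS_X(\bar\M)$ are exactly the finite unions $\bigcup_{i=1}^k [\theta_i]^c$ with each $\theta_i$ honest, so the constructible topology is generated as a subbasis by these sets together with their complements $\bigcap_{i=1}^k[\theta_i]$. Since each $[\theta_i]$ is Stone-clopen in $S_X(\bar\M)$ (every $\LL_{\bar M}$-formula defines a Stone-clopen set), all the sets in this subbasis are Stone-clopen. Consequently, the constructible topology on $\HS_X(\bar\M)$ is coarser than the Stone topology, so the identity map from $S_X(\bar\M)$ with the Stone topology to $\HS_X(\bar\M)$ with its constructible topology is continuous.

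Now I would use Remark \ref{rmk:invcoh}, which says that $\Inv{X}$ is Stone-closed, to conclude that $\Inv{X}$ is Stone-compact and, by the continuity above, compact in the constructible topology of $\HS_X(\bar\M)$. As the constructible topology of any spectral space is Hausdorff, this compact subset is closed, so $\Inv{X}$ is closed in the constructible topology of $\HS_X(\bar\M)$ and \cite[Thm.\,2.1.3]{DST19} applies to yield that $\Invh{X}$ is spectral.

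The only subtle point is the comparison between the Stone topology and the constructible topology of the honest topology, but this follows from the simple observation that honest formulas define Stone-clopen sets. Everything else is a transfer of the same strategy used earlier to prove that $\InvRt{X}$ is a spectral space, the only change being that here the Stone topology need not coincide with the constructible topology of the ambient space, which forces one to argue via the compactness/Hausdorffness detour above instead of appealing to direct closedness.
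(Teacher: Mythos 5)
Your argument is correct, and it completes a route that the paper only gestures at: the paper's proof opens with the one-line remark that the statement ``follows from [DST19, Thm.\,2.1.3]'' but then gives a direct verification of (S1)--(S4) instead. Your contribution is precisely the missing justification for that citation, namely that $\Inv{X}$ is proconstructible in $\HS_X(\bar\M)$. The key point you identify --- that every basic open of the honest topology, hence every quasi-compact open and every complement thereof, is Stone-clopen, so the patch topology of $\HS_X(\bar\M)$ is coarser than the Stone topology --- is sound, and the compactness/Hausdorffness detour then does the job. (In fact your observation shows slightly more: the identity map from $S_X(\bar\M)$ with the Stone topology to $\HS_X(\bar\M)$ with its patch topology is a continuous bijection from a quasi-compact space to a Hausdorff one, hence a homeomorphism, so the two topologies coincide and Stone-closed is literally the same as patch-closed.) By contrast, the paper's direct proof gets (S1)--(S3) for free from the ambient space and Stone-closedness, and establishes soberness by hand: given a closed irreducible $C_0\subset\Invh{X}$, it takes the generic point $p$ of $\mathrm{cl}^{\rm h}(C_0)$ in $\HS_X(\bar\M)$ and shows $p$ is $M$-invariant because every $\sigma\in\mathrm{Aut}_M(\bar\M)$ acts as a homeomorphism for the honest topology and therefore fixes the generic point of the $\sigma$-invariant set $\mathrm{cl}^{\rm h}(C_0)$. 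Your approach buys brevity and reuses a black-box theorem; the paper's buys self-containedness and exhibits the generic point explicitly. Two cosmetic remarks: Remark \ref{rmk:invcoh} is stated for $\InvR{X}$ in the o-minimal setting, so you should note that the same automorphism argument gives Stone-closedness of $\Inv{X}$ in the general NIP setting (the paper simply asserts this); and you should make sure the version of \cite[Thm.\,2.1.3]{DST19} you invoke is the equivalence between proconstructibility and being a spectral subspace, which is indeed what that theorem provides.
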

\begin{proof} This follows from \cite[Thm.\,2.1.3]{DST19}. 
Nevertheless, we give a direct proof for completeness. Properties (S1)-(S3) follow from the fact that $\HS_X(\bar \M)$ is a spectral space and $\Inv{X}$ is closed in $S_X(\bar \M)$ with the Stone topology. Let us show (S4). Pick a closed and irreducible set $C_0$ of $\Invh{X}$. Consider its closure $C:=\text{cl}^{\text{h}}(C_0)$ in $\HS_X(\bar \M)$. We clearly have that the closed and irreducible subset $C$ of $\HS_X(\bar \M)$ satisfies $C\cap \Inv{X}=C_0$. Hence, since $\HS_X(\bar \M)$ is a spectral space, there is a unique $p\in \HS_X(\bar \M)$ such that $\text{cl}^\text{h}(p)=C$. Let us see that $p\in \Inv{X}$. For each  $\sigma\in {\rm Aut}_M(\bar{\M})$ the induced map $\HS_X(\bar \M)\rightarrow \HS_X(\bar \M)$ given by $p\mapsto \sigma(p)$ is a homeomorphism. Indeed, it is enough to note that if $\theta(x,d)$ is honest, then so is $\theta(x,\sigma(d))$. Hence, it follows that $$\textrm{cl}^{\rm h}(\sigma(p))=\sigma(\textrm{cl}^{\rm h}(p))=\sigma(C)=\sigma(\textrm{cl}^{\rm h}(C_0))=\textrm{cl}^{\rm h}(\sigma(C_0))=\textrm{cl}^{\rm h}(C_0)=C$$
	and by uniqueness of $p$ we deduce that $\sigma(p)=p$. Thus $p\in C\cap \Inv{X}=C_0$, so $C_0$ is the closure of $p$ in $\Invh{X}$, as required.
\end{proof}	

We next prove normality of $\Invh{X}$. To that end we need to characterize Simon's $F_M$ retraction.

\begin{prop}\label{honestF_M} Let $p\in \Inv{X}$. An $\LL_{\bar{ M}}$-formula $\psi(x)$ belongs to $F_M(p)$ if and only if there is an honest $\LL_{\bar{ M}}$-formula $\theta(x)\in p$ such that $\theta(M)=\psi(M)$.
\end{prop}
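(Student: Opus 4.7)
The plan is to prove the two implications separately, establishing first the easier backward direction which will then be invoked inside a symmetry argument to obtain the forward direction. In both cases I would rely on the characterisation of $F_M(p)$ via the $\LL_P$-pair $(\mathcal{N}',\mathcal{N})$ and the dichotomy \eqref{piefor} from Remark \ref{FMdescription}, together with the strengthened existence result for honest formulas provided by Fact \ref{F:Honest_exist}.

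For the $(\Leftarrow)$ direction, assume $\theta\in p$ is honest with $\theta(M)=\psi(M)$. I would first establish the intermediate statement $\theta\in F_M(p)$: choosing $N$ saturated and containing the parameters of both $\theta$ and $\psi$, we have $\theta\in p|_N$ trivially, so $p|_N\cup\{P(x),\theta(x)\}$ is a subset of the consistent set $p|_N\cup\{P(x)\}\cup F_M(p)$; by \eqref{piefor} this forces $\theta\in F_M(p)$. Then I would propagate from $\theta$ to $\psi$ using that $F_M(p)\in\Coh{X}$ is finitely satisfiable in $M$: if $\psi\notin F_M(p)$, then $\theta\wedge\neg\psi\in F_M(p)$, so this formula would be realised in $M$, contradicting $(\theta\wedge\neg\psi)(M)=\psi(M)\setminus\psi(M)=\emptyset$. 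Note that honesty of $\theta$ itself plays no role in this direction; only membership $\theta\in p$ and the trace coincidence $\theta(M)=\psi(M)$ are needed.

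For the $(\Rightarrow)$ direction, assume $\psi\in F_M(p)$ and apply Fact \ref{F:Honest_exist} to obtain a \emph{pair} of honest formulas $\theta,\theta'\in\LL_{\bar M}$ with $\theta(M)=\psi(M)=\neg\theta'(M)$ and such that no $M$-invariant type contains $\neg\theta'\wedge\neg\theta$. The formula $\theta$ is the candidate. Suppose toward a contradiction that $\theta\notin p$, equivalently $\neg\theta\in p$; since $p$ is $M$-invariant, the third clause of the Fact forces $\theta'\vee\theta\in p$, and hence $\theta'\in p$. As $\theta'$ is honest and $\theta'(M)=(\neg\psi)(M)$, the $(\Leftarrow)$ direction just established yields $\neg\psi\in F_M(p)$, contradicting $\psi\in F_M(p)$. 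The main obstacle is precisely the asymmetry of honesty: the negation of an honest formula need not be honest, so one cannot directly apply the backward direction to $\neg\theta$. This gap is exactly what is bridged by the companion formula $\theta'$ supplied by Fact \ref{F:Honest_exist}(iii), and once this lemma is invoked the argument reduces cleanly to the easy direction.
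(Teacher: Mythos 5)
Your $(\Leftarrow)$ direction contains a fatal gap, and since your $(\Rightarrow)$ direction is deduced from it, the whole argument collapses. The problematic step is the intermediate claim that $\theta\in p$ already forces $\theta\in F_M(p)$. This is false, and your own aside that ``honesty of $\theta$ plays no role in this direction'' should have been a warning: without honesty the backward implication fails outright. Concretely, take $\M$ the real field, $X=[0,1]$, $p$ the $M$-invariant type determined by $0<x<a$ for all positive $a\in\bar M$, and $\theta=\psi=(0<x<\varepsilon)$ for $\varepsilon>0$ infinitesimal. Then $\theta\in p$ and $\theta(M)=\psi(M)=\emptyset$, yet $\psi\notin F_M(p)$ because $F_M(p)$ is finitely satisfiable in $M$ and $\psi(M)=\emptyset$. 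The error in your justification is the assertion that ``$\theta\in p|_N$ trivially'': in Simon's construction the parameters of the formula being tested must be placed inside $M'$, and since $(\mathcal N',\mathcal N)\succeq(\M',\M)$ with $P(M')=M$ one has $M'\cap N=M$; hence the parameters of $\theta$ do \emph{not} lie in $N$ (unless they already lie in $M$), so $\theta\notin p|_N$ and the set $p|_N\cup\{P(x),\theta(x)\}$ is not a subset of $p|_N\cup\{P(x)\}\cup F_M(p)$. There is no freedom to move those parameters into $N$ instead, and in any case the counterexample shows no correct argument can avoid honesty here. The true content of $(\Leftarrow)$ is precisely the statement ``$\theta\in p$ \emph{honest} implies $\theta\in F_M(p)$'', and proving it requires real work: the paper does it by first establishing $(\Rightarrow)$ via a compactness argument over the set of invariant types consistent with $\psi(x)\wedge P(x)$, producing an honest $\theta_0\in p|_N$ with $\theta_0(x)\wedge P(x)\vdash\neg\theta(x)$, and then transferring an $\LL_P$-sentence down to $(\M',\M)$ to contradict the honesty of $\theta$.

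Your $(\Rightarrow)$ step is, in itself, a nice observation: Fact \ref{F:Honest_exist}(iii) gives $\theta\vee\theta'\in p$ for every $M$-invariant $p$, so if $\theta\notin p$ then $\theta'\in p$ with $\theta'(M)=\neg\psi(M)$, and the backward direction would yield $\neg\psi\in F_M(p)$, a contradiction. This is genuinely slicker than the paper's compactness argument for $(\Rightarrow)$ and would be a legitimate alternative \emph{if} $(\Leftarrow)$ were available independently. But you have inverted the paper's logical order ($(\Leftarrow)$ there is deduced \emph{from} $(\Rightarrow)$), and the base case you rely on is exactly the part you have not proved. To repair the proof you must supply a genuine argument for ``honest $\theta\in p$ implies $\theta\in F_M(p)$'' that does not presuppose membership of $\theta$ in $p|_N$.
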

\begin{proof} \noindent ($\Rightarrow$) Let $\psi(x)$ be an $\LL_{\bar{ M}}$-formula and suppose $\psi(x)\in F_M(p)$. Let $\M'\preceq \bar \M$ be an $|M|^+$-saturated elementary extension of $\M$ containing the parameters of $\psi(x)$. Now, working in the language $\LL_P=\LL\cup\{P\}$, consider a sufficiently saturated elementary extension $(\mathcal N',\mathcal N)$ of $(\mathcal M',\mathcal M)$. Set
	$$S:=\left\{q\in \Inv{X}: q|_N(x)\cup \{P(x)\}\cup \{\psi(x)\} \text{ is consistent}\right\}.$$
	By \eqref{piefor} in Remark \ref{FMdescription} for every $q\in S$ there is some $\LL_N$-formula $\theta_q$ such that $\theta_q(x)\wedge P(x) \vdash \psi(x)$. Since $S$ is a closed subset of $\Inv{X}$ and $S\subset \bigcup_{q\in S} [\theta_q]$, by quasi-compactness there is an $\LL_{N}$-formula $\theta(x)$ such that $\theta(x)\wedge P(x)\vdash \psi(x)$ and $S\subset [\theta]$. In particular, there is no type $q\in \Inv{X}$ for which the set $q|_N(x)\cup \{P(x)\}\cup \{\psi(x)\wedge \neg\theta(x)\}$ is consistent.

	By definition of $F_M(p)$ we have $p\in S$, so $\theta\in p$. Therefore, it is enough to show that $\theta$ is an honest and $\theta(M)=\psi(M)$. For the latter, if $a\in \psi(M)$ then $\tp(a/\bar{M})\in S$ and so $a\in \theta(M)$. On the other hand, if $a\in \theta(M)$, then, since $\theta(x)\wedge P(x)\vdash \psi(x)$, we deduce $a\in \psi(M)$. Hence $\theta(M)=\psi(M)$. Finally, to prove that $\theta(x)$ is honest, let $\varphi(x)$ be an $\LL_{M}$-formula with $\psi(M)=\theta(M)\subset \varphi(M)$. Then $(\mathcal M',\mathcal M)$ and so $(\mathcal N',\mathcal N)$ satisfy 
	\[\forall x (\psi(x)\wedge P(x)\rightarrow \varphi(x)).\] As $\theta(x)\wedge P(x)\vdash \psi(x)$, we deduce that $(N',N)$ satisfies $\forall x (\theta(x)\wedge P(x)\rightarrow \varphi(x))$ and, since $\theta$ has parameters in $N$, we deduce  $\theta(\bar{M})\subset \varphi(\bar{M})$.	
	
\noindent ($\Leftarrow$) Fix $\psi(x)$ and let $\theta(x,b)\in p$ be an honest $\LL_{\bar{ M}}$-formula such that $\theta(M,b)=\psi(M)$. As before, consider an $|M|^+$-saturated elementary extension  $\M'\preceq \bar \M$ of $\M$ containing the parameters of $\psi(x)$ and $\theta(x,b)$. Also, consider a sufficiently saturated elementary extension $(\mathcal N',\mathcal N)$ of $(\mathcal M',\mathcal M)$. 

Suppose that $\psi(x)\not\in F_M(p)$. As $\psi(M)=\theta(M,b)$, we deduce $\neg \theta(x,b)\in F_M(p)$. By the implication ($\Rightarrow$),  there is some honest $\theta_0(x,c)\in p|_N$  such that $\theta_0(M,c)=\neg \theta(M,b)$ and $\theta_0(x,c)\wedge P(x)\vdash \neg\theta(x,b)$. Since $p$ is $M$-invariant and $p\in [\theta(x,b)\wedge \theta_0(x,c)]$, the formula $\theta(x,b)\wedge \theta_0(x,c)$ is consistent. Set 
\[\varphi(y,z):= \exists x (\theta(x,y)\wedge \theta_0(x,z)). 
\]	
The pair $(N',N)$ satisfies the $\LL_{M'}$-sentence
$$\exists z (P(z)\wedge \varphi(b,z) \wedge \forall x \big(\theta_0(x,z)\wedge P(x)\rightarrow \neg\theta(x,b)\big),$$	
and so does $(M',M)$. Therefore there exists $m\in M$ such that 
$\varphi(b,m)$ holds and $\theta_0(M,m)\subset \neg\theta(M,b)$. In particular, we obtain that $\theta(\bar M,b)\cap \theta_0(\bar M,m)\neq\emptyset$ by the definition of $\varphi(y,z)$  and also that $\theta(M,b)\subset \neg\theta_0(M,m)$. However, as $\theta(x,b)$ is honest and $\theta_0(x,m)$ is an $\LL_{M}$-formula, the latter yields $\theta(\bar{M},b)\subset \neg\theta_0(\bar{M},m)$, which is a contradiction.	
\end{proof}	
As an immediate consequence, we obtain the following:
\[
F_M(p) \in \bigcap \left\{  [\theta] : {\theta\in p\text{ is honest}}  \right\} = \mathrm{cl}^{\rm h}(p).
\]
 We will use the following notation: given an $\LL_{\bar{M}}$-formula $\psi(x)$, we write
$$[\psi]^{\rm inv}:=\{p\in \Inv{X}: \psi\in p\}.$$ 
\begin{cor}\label{interhonest} If $\theta_1,\ldots,\theta_\ell$ are honest over $M$ formulas and $[\theta_1]^{{\rm inv}}\cap \cdots \cap [\theta_\ell]^{{\rm inv}}$ is non-empty, then 
$$(\theta_1\wedge \cdots \wedge \theta_\ell)(M)\neq \emptyset.$$	
\end{cor}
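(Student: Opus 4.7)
The plan is to apply Proposition \ref{honestF_M} directly, using Simon's retraction $F_M$ as the bridge to finite satisfiability.

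First I would pick any $p \in [\theta_1]^{\rm inv} \cap \cdots \cap [\theta_\ell]^{\rm inv}$, which is possible by hypothesis. Thus $p \in \Inv{X}$ and every $\theta_i$ belongs to $p$. The key observation is that each $\theta_i$ individually satisfies the criterion of Proposition \ref{honestF_M} for membership in $F_M(p)$: taking $\psi := \theta_i$ and using $\theta := \theta_i$ itself as the witness (honest, in $p$, and trivially $\theta_i(M) = \psi(M)$), we conclude $\theta_i \in F_M(p)$ for each $i = 1, \ldots, \ell$.

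Next, since $F_M(p)$ is a complete type and hence closed under finite conjunction, the formula $\theta_1 \wedge \cdots \wedge \theta_\ell$ lies in $F_M(p)$. Finally, by construction $F_M(p) \in \Coh{X}$ is finitely satisfiable in $M$ (this was highlighted in Remark \ref{FMdescription}, and is the defining property of the codomain of the retraction $F_M$). Therefore $(\theta_1 \wedge \cdots \wedge \theta_\ell)(M) \neq \emptyset$, as required.

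There is no real obstacle here; the only subtle point to flag is that one cannot simply claim that $\theta_1 \wedge \cdots \wedge \theta_\ell$ is itself honest (the excerpt only asserts closure of honesty under finite disjunction, and conjunctions of honest formulas need not be honest), so the argument must apply the criterion of Proposition \ref{honestF_M} to each $\theta_i$ individually before conjoining inside the complete type $F_M(p)$.
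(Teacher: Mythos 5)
Your proof is correct and is essentially identical to the paper's: pick $p$ in the intersection, use Proposition \ref{honestF_M} (with $\psi=\theta=\theta_i$) to get each $\theta_i\in F_M(p)$, and conclude from finite satisfiability of $F_M(p)$. Your closing remark that the conjunction of honest formulas need not itself be honest is a fair observation, but it does not change the argument.
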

\begin{proof} By assumption, there is an $M$-invariant type $p\in [\theta_i]^{{\rm inv}}$ for $i=1,\ldots,\ell$, so by Proposition \ref{honestF_M} we have $\theta_1,\ldots,\theta_\ell \in F_M(p)$. Since $F_M(p)$ is finitely satisfiable in $M$, we deduce  $(\theta_1\wedge \cdots \wedge \theta_\ell)(M)\neq \emptyset$, as required.	
\end{proof}
Recall that if $X$ is a normal spectral space, then the set of the closed points $\text{Max}(X)$ is a quasi-compact Hausdorff topological space. Moreover, for every $x\in X$ there is a unique closed point $r(x)\in \text{cl}(x)$ and the map $r:X\rightarrow \text{Max}(X)$ is a continuous retraction (see Fact \ref{fact:max}). In the following result we show that $\Invh{X}$ is a normal spectral space and that $F_M$ coincides with the natural retraction to the closed points.

\begin{theorem}\label{thm:normal} The closed points of the spectral topological space $\Invh{X}$ are exactly the finitely satisfiable types. Moreover, the space  $\Invh{X}$ is normal and the natural retraction to the closed points $$r_M^{\rm h}:\Invh{X} \rightarrow \Coh{X}$$ coincides with $F_M$.
\end{theorem}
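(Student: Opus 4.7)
The plan is to first show that $F_M$ is continuous for the honest topology on $\Invh{X}$ and the Stone topology on $\Coh{X}$; normality, the description of closed points, and the identification $r_M^{\rm h}=F_M$ will then follow together from this and the already established inclusion $F_M(p)\in \mathrm{cl}^{\rm h}(p)$.

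For the characterization of closed points no normality is needed: finitely satisfiable types are closed by Lemma \ref{cohclosed}, and conversely if $p$ is a closed point of $\Invh{X}$ then $F_M(p)\in\mathrm{cl}^{\rm h}(p)\cap \Inv{X}=\{p\}$, so $p=F_M(p)\in \Coh{X}$. The central step is continuity of $F_M$. For any $\LL_{\bar M}$-formula $\psi$, the plan is to apply the stronger form of Fact \ref{F:Honest_exist} to produce honest formulas $\theta_0,\theta'_0$ with $\theta_0(M)=\psi(M)$, $\theta'_0(M)=\neg\psi(M)$, $\theta_0\wedge \theta'_0$ inconsistent in $\bar M$, and crucially no $M$-invariant type containing $\neg\theta_0\wedge \neg\theta'_0$. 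One then checks
\[
F_M^{-1}([\psi])=[\theta_0]^{\rm inv},
\]
a basic closed subset of $\Invh{X}$. The inclusion $\supseteq$ is Proposition \ref{honestF_M} applied to the honest formula $\theta_0$. For $\subseteq$, note that $\theta'_0\in p$ would imply $\neg\psi\in F_M(p)$ by Proposition \ref{honestF_M} (applied to $\theta'_0$), contradicting $\psi\in F_M(p)$; hence $\theta'_0\notin p$, and property (iii) of Fact \ref{F:Honest_exist} forces $\theta_0\in p$. Since the sets $[\psi]\cap \Coh{X}$ form a basis of the Stone topology on $\Coh{X}$, continuity of $F_M$ follows.

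Normality is then a pull-back argument. Given disjoint closed sets $C_1,C_2\subset \Invh{X}$, for each $p\in C_i$ one has $F_M(p)\in \mathrm{cl}^{\rm h}(p)\subseteq C_i$, so $F_M(C_i)\subseteq C_i$ and hence $F_M(C_1),F_M(C_2)$ are disjoint. Each $C_i$ is quasi-compact (closed in the quasi-compact space $\Invh{X}$), so by continuity $F_M(C_i)$ is a compact subset of the compact Hausdorff space $\Coh{X}$. Separating the $F_M(C_i)$ by disjoint open sets $U_1,U_2$ in $\Coh{X}$ and pulling back via the continuous map $F_M$ produces disjoint open sets $F_M^{-1}(U_i)\subseteq \Invh{X}$ containing $C_1,C_2$. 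Finally, once $\Invh{X}$ is established as a normal spectral space with closed points $\Coh{X}$, Fact \ref{fact:max} delivers the retraction $r_M^{\rm h}$; as $F_M(p)$ is itself a closed point lying in $\mathrm{cl}^{\rm h}(p)$, uniqueness of the maximal specialization gives $r_M^{\rm h}(p)=F_M(p)$.

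The principal obstacle is the continuity step. A priori one can only write $F_M^{-1}([\psi])=\bigcup\{[\theta]^{\rm inv}:\theta\text{ honest},\,\theta(M)=\psi(M)\}$, a union of basic closed sets that is not visibly closed. What rescues the argument is property (iii) of the refined honest definitions from Fact \ref{F:Honest_exist}, which collapses this union to a single basic closed set $[\theta_0]^{\rm inv}$ by ruling out any $M$-invariant type lying in the ``gap'' between the honest witness $\theta_0$ of $\psi$ and the honest witness $\theta'_0$ of $\neg\psi$.
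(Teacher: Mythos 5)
Your proof is correct, and it reorganizes the argument in a genuinely different way from the paper. The paper proceeds in the order: closed points (via quasi-compactness of $\mathrm{cl}^{\rm h}(p)\cap\Inv{X}=\{p\}$ together with Corollary \ref{interhonest}), then normality (reducing to basic closed sets $[\psi_1]^{\rm inv}$, $[\psi_2]^{\rm inv}$ and using the refined clause of Fact \ref{F:Honest_exist} to manufacture a clopen partition $[\theta_1]^{\rm inv}\cup[\theta_2]^{\rm inv}$ separating them), and only deduces continuity of $F_M$ at the very end as a byproduct of $r_M^{\rm h}=F_M$. You invert this: you first compute $F_M^{-1}([\psi])=[\theta_0]^{\rm inv}$ exactly --- clause (iii) of Fact \ref{F:Honest_exist} is precisely what collapses the a priori union over all honest definitions of $\psi$ to a single basic closed set --- and then derive normality by pulling back a Hausdorff separation of the quasi-compact sets $F_M(C_i)\subseteq C_i$ inside the compact Hausdorff space $\Coh{X}$. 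Your closed-point argument ($F_M(p)\in\mathrm{cl}^{\rm h}(p)\cap\Inv{X}=\{p\}$ forces $p=F_M(p)$) is also shorter than the paper's compactness argument. Both proofs consume the same inputs (Proposition \ref{honestF_M}, the refined Fact \ref{F:Honest_exist}, and the inclusion $F_M(p)\in\mathrm{cl}^{\rm h}(p)$); what yours buys is a direct, self-contained proof of the continuity of $F_M$, while the paper's buys an explicit clopen separation of the two closed sets. One small point worth making explicit in your continuity step: from $F_M^{-1}([\psi])=[\theta_0]^{\rm inv}$ you only exhibit the preimage of a basic \emph{open} set of $\Coh{X}$ as a \emph{closed} set; either observe that checking preimages of a subbasis of closed sets suffices for continuity, or note that the complement is $F_M^{-1}([\neg\psi])=[\theta'_0]^{\rm inv}$, so the preimage is in fact clopen.
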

\begin{proof}Let $p\in \Inv{X}$ be a closed point and let us show that $p$ is finitely satisfiable in $M$. The converse follows from Lemma \ref{cohclosed}. Since 
\[\bigcap_{\theta\in p\text{ honest}}[\theta]^{\rm inv} = \mathrm{cl}^{\rm h}(p)\cap \Inv{X}=\{p\},\] for any $\psi\in p$ there are finitely many honest formulas $\theta_1,\ldots,\theta_{\ell}\in p$ such that 
\[ 
[\theta_1]^{\rm inv}\cap  \cdots \cap [\theta_\ell]^{\rm inv} \subset [\psi]^{\rm inv}.
\] Thus, by Corollary \ref{interhonest} we have  $\emptyset \neq (\theta_1\wedge\cdots\wedge\theta_{\ell})(M)\subset \psi(M)$.
	
To verify that the space is normal, consider two closed non-empty disjoint subsets $C_1$ and $C_2$ of $\Invh{X}$. We can assume that $C_1=[\psi_1]^{\rm inv}$ and $C_2=[\psi_2]^{\rm inv}$ where both $\psi_1$ and $\psi_2$ are a finite conjunction of honest formulas. By Corollary \ref{interhonest}, we have $\psi_1(M)\neq \emptyset$ and $\psi_2(M)\neq\emptyset$. Thus, by Fact \ref{F:Honest_exist} there are two honest $\LL_{\bar{M}}$-formulas $\theta_1(x)$ and $\theta_2(x)$ such that:
\begin{itemize}
	\item $\theta_1(M)=\psi_1(M)=\neg\theta_2(M)$, $\theta_1(\bar{M})\subset \neg\theta_2(\bar{M})$ and
	\item no $M$-invariant type contains the formula $\neg\theta_2\wedge \neg \theta_1$. 
\end{itemize}
In particular, the set $[\theta_1]^{\rm inv}=[\neg \theta_2]^{\rm inv}$ is open and closed in $\Inv{X}$. In addition, we check that $[\psi_i]^{\rm inv}\subset [\theta_i]^{\rm inv}$ for $i=1,2$. Indeed, if $[\psi_1]^{\rm inv}\cap [\theta_2]^{\rm inv}$ were non-empty, then it would follow by Corollary \ref{interhonest} that $\psi_1(M)\cap \theta_2(M)\neq \emptyset$, which is a contradiction. Therefore $[\psi_1]^{\rm inv}\subset [\neg \theta_2]^{\rm inv}=[\theta_1]^{\rm inv}$. Likewise, we get that $[\psi_2]^{\rm inv}\cap [\theta_1]^{\rm inv}=\emptyset$, as otherwise $\psi_2(M)\cap \theta_1(M)$ and hence $\psi_1(M)\cap \psi_2(M)$ would be non-empty. Consequently  $[\psi_2]^{\rm inv}\subset [\neg \theta_1]^{\rm inv}= [ \theta_2]^{\rm inv}$, as required.

Once we have shown that $\Invh{X}$  is a normal spectral space, the retraction $r_M^{\rm h}(p)$ is by definition the unique closed point in $\text{cl}^{\rm h}(p)$, so $r_M^{\rm h}(p)=F_M(p)$ for every $p\in \Invh{X}$. In particular, the map $F_M$ is continuous for the honest topology. 		
\end{proof}	
\begin{remark}
We write down an alternative proof of the normality of $\Invh{X}$ by spectral topological means. By \cite[Prop.\,2]{CC83}, to prove that $ \Invh{X}$ is normal it is enough to show for any $p\in \Invh{X}$ that $\text{cl}^{\rm h}(p)\cap \Coh{X}=\{F_M(p)\}$. Fix $q\in \Coh{X}$ such that $q\in \text{cl}^{\rm h}(p)$ and $q\neq F_M(p)$. Then there is an honest formula $\theta(x)\in q$ such that $\neg \theta(x)\in F_M(p)$. By Proposition \ref{honestF_M} there is an honest formula $\theta_0\in p$ with $\theta_0(M)=\neg \theta(M)$. Since $p\in [\theta_0]^{\rm inv}$, we get $q\in [\theta_0]^{\rm inv}$, so $\theta\wedge \theta_0\in q$, which is a contradiction since $(\theta\wedge \theta_0)(M)=\emptyset$ and $q$ is  finitely satisfiable in $M$. 
\end{remark}

As in the case of the spectral topology, the  closure of a point in $\Invh{X}$ is  totally ordered under specialization. However, the reason why this is true in the honest topology is radically different and concerns the small size of such closure:

\begin{prop} For every $p\in \Invh{X}$ we have that $\mathrm{cl}^{\rm h}(p)=\{p,F_M(p)\}$.	
\end{prop}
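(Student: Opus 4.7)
\textit{Plan.} The inclusion $\{p,F_M(p)\}\subset\mathrm{cl}^{\rm h}(p)$ is immediate: $p$ lies trivially in its own closure, and $F_M(p)\in\mathrm{cl}^{\rm h}(p)$ because, by Proposition \ref{honestF_M} applied with $\psi=\theta$, every honest formula $\theta\in p$ satisfies $\theta\in F_M(p)$, so $F_M(p)$ belongs to every basic honest closed set $[\theta]$ containing $p$; this is already recorded right after Proposition \ref{honestF_M}.

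For the reverse inclusion, let $q\in\mathrm{cl}^{\rm h}(p)\cap\Inv{X}$ with $q\neq p$; the plan is to conclude $q=F_M(p)$. Continuity of the retraction $F_M$ from Theorem \ref{thm:normal} gives $F_M(q)\in\mathrm{cl}^{\rm h}(F_M(p))=\{F_M(p)\}$, so $F_M(q)=F_M(p)$. If $q$ is finitely satisfiable, then $q=F_M(q)=F_M(p)$ and we are done; so the remaining task is to rule out the case that $q\neq p$ and $q$ is not finitely satisfiable.

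In that remaining case I would combine two obstructions. First, since $q\neq p$, the $\mathrm{T}_0$ separation used in the proof that $\HS_X(\bar\M)$ is spectral produces an honest formula $\theta_0$ distinguishing $p$ and $q$, and since $q\in\mathrm{cl}^{\rm h}(p)$ already contains every honest formula of $p$, the separation must be $\theta_0\in q$ and $\theta_0\notin p$. Second, non-finite satisfiability of $q$ yields some $\chi\in q$ with $\chi(M)=\emptyset$; then $\neg\chi$ has $M$-trace $M$ and is trivially honest, so if $\neg\chi\in p$ it would force $\neg\chi\in q$ and contradict $\chi\in q$; hence $\chi\in p$. The plan is then to apply Fact \ref{F:Honest_exist} to $\theta_0$ to get an honest partner $\theta_0'$ with $\theta_0(\bar\M)\cap\theta_0'(\bar\M)=\emptyset$ and $\theta_0\vee\theta_0'$ in every $M$-invariant type. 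Since $\theta_0\notin p$ and $p$ is $M$-invariant we get $\theta_0'\in p$; but $\theta_0'$ is honest and in $p$, so $\theta_0'\in q$, producing $\theta_0\wedge\theta_0'\in q$ with inconsistent $\bar\M$-trace -- the desired contradiction.

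The main obstacle is that Fact \ref{F:Honest_exist} as stated only guarantees the existence of some honest $\theta$ with the prescribed $M$-trace, and does not automatically allow us to take this $\theta$ to be the specific honest formula $\theta_0$ produced by the $\mathrm{T}_0$ separation. Overcoming this will require, in the crucial case where Fact applied to $\theta_0$ returns only a trivial companion, replacing $\theta_0$ by a suitable honest $\theta^*$ coming from Fact \ref{F:Honest_exist}; the hypothesis $\chi\in p\cap q$ with $\chi(M)=\emptyset$ should provide enough room to transfer the $\mathrm{T}_0$ separation from $\theta_0$ to $\theta^*$, so that the partner argument based on condition (iii) of Fact closes. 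Once this combinatorial step is in place, the same contradiction yields $q=F_M(p)$.
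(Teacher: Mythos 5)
Your reduction is sound up to the final step: the inclusion $\{p,F_M(p)\}\subset \mathrm{cl}^{\rm h}(p)$, the identity $F_M(q)=F_M(p)$ for $q\in\mathrm{cl}^{\rm h}(p)$, the dispatch of the finitely satisfiable case, and the extraction of an honest $\theta_0\in q\setminus p$ together with the observation that any $\chi\in q$ with $\chi(M)=\emptyset$ must also lie in $p$ are all correct. The gap is exactly where you locate it, but it is not a wrinkle that the hypothesis $\chi\in p\cap q$ can be expected to repair. Fact \ref{F:Honest_exist} applied to $\theta_0$ yields a pair $(\theta^*,\theta_0')$ with $\theta^*(M)=\theta_0(M)=\neg\theta_0'(M)$ such that every $M$-invariant type contains $\theta^*\vee\theta_0'$; since $p$ is $M$-invariant, either $\theta_0'\in p$ or $\theta^*\in p$. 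The first alternative does give your contradiction (then $\theta_0\wedge\theta_0'\in q$ has empty $M$-trace, so $[\theta_0\wedge\theta_0']$ is open in the honest topology and forces $\theta_0\in p$). But in the second alternative nothing prevents $p$ from containing an honest $\theta^*$ with the same $M$-trace as $\theta_0$ while omitting $\theta_0$ itself, and re-choosing the separating formula is not guaranteed to avoid this: the obstruction is that $\mathrm{T}_0$-separation of $p$ from $q$ is the wrong separation to exploit.

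The paper closes the argument with a different pair of ingredients. Since $q$ is not finitely satisfiable, $q\neq F_M(q)=F_M(p)$, and $F_M(q)\in\mathrm{cl}^{\rm h}(q)$; hence the $\mathrm{T}_0$-separation applied to $q$ and $F_M(p)$ must produce an honest $\theta_1\in F_M(p)$ with $\theta_1\notin q$. Now $\theta_0\in F_M(q)=F_M(p)$ by Proposition \ref{honestF_M}, so $\theta_0\wedge\theta_1\in F_M(p)$ has nonempty $M$-trace, and Proposition \ref{honestF_M} again gives an honest $\theta'\in p$ with $\theta'(M)=(\theta_0\wedge\theta_1)(M)$. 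Then $\theta_0\wedge\theta'\wedge\neg\theta_1$ belongs to $q$ and has empty $M$-trace, hence defines an honest-open set containing $q$ and therefore containing $p$; in particular $\theta_0\in p$, the desired contradiction. (The paper runs this for an arbitrary honest $\theta\in q$ to conclude $\mathrm{cl}^{\rm h}(p)=\mathrm{cl}^{\rm h}(q)$ and then invokes $\mathrm{T}_0$; applying it to your specific $\theta_0$ gives the contradiction directly.) So the missing idea is the formula $\theta_1$ separating $q$ from $F_M(p)$, combined with Proposition \ref{honestF_M} to pull $\theta_0\wedge\theta_1$ back to an honest formula of $p$.
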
	
\begin{proof} Assume $p\notin \Coh{X}$, since the points of the latter set are closed. Suppose that there is some $p_1\in \text{cl}^{\rm h}(p)$ with $p_1\neq p$ and $p_1\neq F_M(p)$. Note that 
	\[
	F_M(p) = r_M^{\rm h}(p) = r_M^{\rm h}(p_1) \in \text{cl}^{\rm h}(p_1).
	\] Since the honest topology is $\mathrm{T}_0$ and $p_1\neq F_M(p)$, there is an honest formula $\theta_1(x)$ such that $\theta_1\in F_M(p)$ but $\theta_1 \notin p_1$. 
	We claim: $\text{cl}^h(p)=\text{cl}^h(p_1)$.  
	
	It is enough to show that $\text{cl}^h(p)\subset \text{cl}^h(p_1)$, so pick  an honest formula $\theta(x)$ with $p_1\in [\theta]$.  Thus, $\theta \wedge \theta_1\in F_M(p)$ and so $(\theta\wedge \theta_1)(M)\neq \emptyset$. By Proposition \ref{honestF_M} there is an honest formula $\theta'\in p$ with $(\theta\wedge \theta_1)(M)=\theta'(M)$. Thus $\theta\wedge \theta' \wedge \neg\theta_1\in p_1$, as $p_1\in \text{cl}^{\rm h}(p)$. Note that $(\theta\wedge \theta' \wedge \neg\theta_1)(M)=\emptyset$, so $\theta\wedge \theta' \wedge \neg\theta_1$ is an open set in the honest topology. Hence, as $p_1\in [\theta\wedge \theta' \wedge \neg\theta_1]$, it follows  $p\in [\theta\wedge \theta' \wedge \neg\theta_1]$ and so $p\in [\theta]$, as required.
	
	However, once we have shown that $\text{cl}^h(p)=\text{cl}^h(p_1)$, since the honest topology is $\mathrm{T}_0$, it follows that $p=p_1$, which is a contradiction.
\end{proof}
We finish the paper  pointing out two more differences between the spectral and the honest to\-po\-logy (see Lemma \ref{spectralLocCon} and Lemma \ref{L:DimBeta}).
\begin{lemma} The spectral spaces $\HS_X(\bar\M)$ and $\Invh{X}$ are not locally connected.	
\end{lemma}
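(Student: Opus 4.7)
The plan is to exhibit, in each of the two spaces, a specific point at which local connectedness fails, by displaying many clopen singletons that chop up any would-be connected neighborhood. I will take a finitely satisfiable type in $M$ that is not realized by any element of $M$; such a $p \in \Coh{X}$ exists as soon as $X(M)$ is infinite (this is the implicit nontriviality hypothesis — otherwise both spaces are finite discrete and the statement is false).

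The proof rests on two observations. First, for every $m \in X \cap M$ the singleton $\{\tp(m/\bar M)\}$ is clopen in $\HS_X(\bar \M)$ (and hence in $\Invh{X}$): both $x=m$ and $x\neq m$ are $\LL_M$-formulas and so are honest over $M$, which makes $[x=m] = \{\tp(m/\bar M)\}$ basic closed and its complement $[x\neq m]$ also basic closed, hence $\{\tp(m/\bar M)\}$ is also basic open. Second, $M$-realized types are dense at every coheir: a basic open neighborhood of our chosen $p$ has the form $[\neg\theta]$ (respectively $[\neg\theta]^{\rm inv}$) for some honest $\theta\notin p$, and since $p$ is finitely satisfiable in $M$ the formula $\neg\theta$ is realized in $M$, giving some $m\in \neg\theta(M)$ with $\tp(m/\bar M)$ in that basic open.

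With these two facts in hand, suppose for contradiction that $V$ is a connected open neighborhood of $p$. By the density observation, $V$ contains some realized type $\tp(m_0/\bar M)$ with $m_0 \in M$. By the clopen observation, $\{\tp(m_0/\bar M)\}$ is clopen in the ambient space and therefore clopen in $V$. Since it is a nonempty clopen subset of the connected set $V$, we must have $V = \{\tp(m_0/\bar M)\}$, whence $p = \tp(m_0/\bar M)$; this contradicts the choice of $p$ as a non-realized type. Consequently no open neighborhood of $p$ in $\HS_X(\bar \M)$ (respectively $\Invh{X}$) contains a connected open neighborhood of $p$, proving the failure of local connectedness in both spaces.

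The only delicate point is making sure the same $p$ and the same argument function in both topologies simultaneously: the argument for $\Invh{X}$ uses basic opens of the form $[\neg\theta]^{\rm inv}$ (intersection of $[\neg\theta]$ with $\Inv{X}$), while the argument for $\HS_X(\bar \M)$ uses the full $[\neg\theta]$, but the density and the clopen properties are both inherited by the subspace $\Invh{X}$ from $\HS_X(\bar\M)$, so the same chain of implications applies verbatim in each setting. The main obstacle is really just the bookkeeping that the singleton $\{\tp(m_0/\bar M)\}$ retains its clopen character once relativized to $\Inv{X}$, which is immediate since the separating honest formulas $x=m_0$ and $x\neq m_0$ are $\LL_M$ and hence their $[\cdot]^{\rm inv}$-traces are by definition basic closed in $\Invh{X}$.
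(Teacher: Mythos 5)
Your proof is correct, and its engine is the same as the paper's: the $\LL_M$-formulas $x=m$ and $x\neq m$ are honest, so each realized singleton $[x=m]=\{\tp(m/\bar M)\}$ is clopen, and any connected set meeting a realized type collapses to that singleton. Where you differ is in how the realized type is produced inside the putative connected neighborhood. The paper assumes local connectedness globally, takes the connected component $C$ of a non-realized invariant type, uses clopenness plus compactness to write $C$ as a finite intersection $[\theta_1]\cap\cdots\cap[\theta_\ell]$ of honest sets, and then invokes Corollary \ref{interhonest} (hence the $F_M$ machinery via Proposition \ref{honestF_M}) to find $m\in(\theta_1\wedge\cdots\wedge\theta_\ell)(M)$. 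You instead localize at a specific non-realized type $p\in\Coh{X}$ and get the realized point directly from finite satisfiability of $p$ applied to a basic open $[\neg\theta]\ni p$ (one should intersect with the $M$-definable set $X$ to land in $X(M)$, but that is routine since $X\in p$). This is slightly more elementary, avoids Corollary \ref{interhonest} and the step identifying a clopen set with a finite intersection of honest closed sets, and shows the sharper statement that $p$ has no connected open neighborhood at all. Both arguments tacitly require $X$ infinite (otherwise the spaces are finite discrete and locally connected); you are right to flag this, and the paper's proof needs the same hypothesis to produce its non-realized invariant type.
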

\begin{proof} Let $\HS_X(\bar\M)^{\diamond}$ denote one of the two topological space. Suppose that $\HS_X(\bar\M)^{\diamond}$ is locally connected and fix a connected component $C$ of $\HS_X(\bar\M)^{\diamond}$ containing a non-realized $M$-invariant type. As $C$ is open and closed, there are honest $ \LL_{\bar{M}}$-formulas $\theta_1,\ldots,\theta_\ell$ such that $C=[\theta_1]^{\diamond}\cap \cdots \cap [\theta_\ell]^{\diamond}$, where $[\theta_i]^{\diamond}$ denotes $[\theta_i]\cap \HS_X(\bar\M)^{\diamond}$. By Corollary \ref{interhonest} there is some $m\in (\theta_1 \wedge\cdots \wedge \theta_\ell)(M)$. Both the formulas $x=m$ and $\neg (x = m)$ are $M$-definable and therefore they are honest. Since \[C= [\theta_1]^{\diamond}\cap \cdots \cap [\theta_\ell]^{\diamond} \subset [x=m]\cup [\neg (x = m)]\] and $C$ is connected, we deduce that $C$ equals $[x=m]$, which is a contradiction.
\end{proof}	
\begin{lemma} Assume that for every model of the theory the algebraic closure provides a geometry (so that we have a well-defined dimension). Then $\dim(p)=\dim(F_M(p))$ for all $p\in \Invh{X}$. 
\end{lemma}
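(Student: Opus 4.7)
I would reduce the statement to the following key claim, valid whenever $\acl$ defines a pregeometry on every model of $T$: for every $M$-invariant type $q \in S_X(\bar\M)$,
\[
\dim(q) = \dim(q|_M).  \qquad (\star)
\]
Assuming $(\star)$, the lemma follows at once. Both $p$ and $F_M(p)$ are $M$-invariant (the latter is even finitely satisfiable in $M$), and $p|_M = F_M(p)|_M$: every $\LL_M$-formula is automatically honest, so by Proposition \ref{honestF_M} an $\LL_M$-formula $\phi$ belongs to $p$ if and only if it belongs to $F_M(p)$ (the direction $\phi\in p \Rightarrow \phi \in F_M(p)$ takes $\theta=\phi$ in Proposition \ref{honestF_M}; conversely any honest $\theta\in p$ with $\theta(M)=\phi(M)$ satisfies $\theta(\bar M)\subseteq \phi(\bar M)$ by the definition of honesty, so $\phi\in p$). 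Hence
\[
\dim(p) \stackrel{(\star)}{=} \dim(p|_M) = \dim(F_M(p)|_M) \stackrel{(\star)}{=} \dim(F_M(p)).
\]

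To prove $(\star)$, fix $a$ realizing $q$ in a suitable elementary extension of $\bar\M$. The inequality $\dim(a/\bar M)\le \dim(a/M)$ is immediate from the matroid axioms of $\acl$. Suppose it is strict. By the exchange property of the pregeometry, one then finds a coordinate $a_j$ of $a$, a subtuple $a'$ of the remaining coordinates of $a$, a parameter tuple $b \in \bar M$, and an $\LL_M$-formula $\chi(x_j, x', y)$ algebraic in $x_j$ such that $\chi(a_j, a', b)$ holds while $a_j \notin \acl(Ma')$. A further use of exchange yields $b \in \acl(M a_j a') \subseteq \acl(Ma)$.

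By the $M$-invariance of $q$, for every $b' \in \bar M$ realizing $\tp(b/M)$ the formula $\chi(a_j, a', b')$ also holds, and hence $b' \in \acl(Ma)$ by the same exchange argument. If $b \notin \acl(M)$, then $\tp(b/M)$ is non-algebraic, so by the saturation of $\bar M$ it admits strictly more than $|M|$ realizations in $\bar M$, whereas $|\acl(Ma)|\le |M|$; this is a contradiction. Thus $b \in \acl(M) = M$ (the last equality being standard for models of a complete theory), so $\chi(x_j, x', b)$ is already an $\LL_M$-formula witnessing $a_j \in \acl(Ma')$, contradicting the choice of $a_j$. The main obstacle is precisely this last saturation/cardinality step, where $M$-invariance is exploited to trap uncountably many $M$-conjugates of $b$ inside the small set $\acl(Ma)$.
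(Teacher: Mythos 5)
Your overall strategy is the paper's: reduce to showing $\dim(q)=\dim(q|_M)$ for every $M$-invariant $q$ (using $p|_M=F_M(p)|_M$), and then use invariance to show that any algebraic dependence of $a$ over $\bar M$ beyond its dependence over $M$ would force the new parameters to be algebraic over $M$. However, the pivotal step ``a further use of exchange yields $b\in\acl(Ma_ja')$'' is false when $b$ is a tuple: exchange is a statement about single elements, and from $a_j\in\acl(Ma'b)\setminus\acl(Ma')$ one only gets $\dim(b/Ma'a_j)=\dim(b/Ma')-1$, not $b\in\acl(Ma'a_j)$. (Concretely, in a vector space take $a'=\emptyset$ and $a_j=b_1+b_2$ with $b_1,b_2$ independent over $M$: then $a_j\in\acl(Mb_1b_2)\setminus\acl(M)$, yet $(b_1,b_2)\notin\acl(Ma_j)$.) Since your cardinality argument needs every realization $b'$ of $\tp(b/M)$ to lie in the small set $\acl(Ma)$, and that containment is exactly what the broken exchange step was supposed to supply, the contradiction in the case $b\notin\acl(M)$ is not established.

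The repair is the argument the paper actually runs: choose the parameter tuple $b=(b_1,\dots,b_\ell)$ independent over $M$ with $\dim(a/Mb)=\dim(a/\bar M)$; additivity of dimension then gives $\dim(b/Ma)<\ell$, so (after reordering and truncating) a single coordinate $b_\ell$ satisfies an $\LL_M$-formula $\psi(a,b_1,\dots,b_{\ell-1},y_\ell)$ with only finitely many solutions. Applying the $M$-invariance of $q$ to automorphisms fixing $Mb_1\cdots b_{\ell-1}$ pointwise shows that the orbit of $b_\ell$ over $Mb_1\cdots b_{\ell-1}$ is contained in that finite set, whence $b_\ell\in\acl(Mb_1\cdots b_{\ell-1})$, contradicting independence. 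Your counting idea does survive in this form --- a non-algebraic $\tp(b_\ell/Mb_1\cdots b_{\ell-1})$ would have infinitely many realizations, all trapped in a finite set --- but it must be applied to one coordinate at a time over the preceding ones, not to the whole tuple $b$ over $M$.
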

\begin{proof}For each $p\in \Invh{X}$ we have $p|_M=F_M(p)|_M$. So, clearly it is enough to prove: if $q\in \Invh{X}$ with $\dim(q)=d$, then $\dim(q|_M)=d$.
	
Let $a$ be a realization of $q$ in some elementary extension of $\bar \M$ and let $b=(b_1,\ldots,b_\ell)\in \bar M^{\ell}$ be such that $\dim(b/M)=\ell$ and $\dim(a/M,b)=d$. Suppose $\dim(a/M)>d$. Since 
\[
\dim(a,b/M)=\dim(a/M,b)+\dim(b/M)=\dim(b/M,a)+\dim(a/M),
\] it follows that $\dim(b/M,a)<\dim(b/M)=\ell$. Then there exists an $\LL_M$-formula $\psi(x,y_1,\ldots,y_\ell)$ such that  $\psi(a,b_1,\ldots,b_{\ell-1},y_\ell)$ defines a finite set and $\psi(a,b_1,\ldots,b_\ell)$ holds, so $\psi(x,b_1,\ldots,b_{\ell})\in q$. By $M$-invariance of $q$, for each automorphism $\sigma$ of $\bar\M$ that fixes $M,b_1,\ldots,b_{\ell-1}$ pointwise we have that $\psi(x,b_1,\ldots,b_{\ell-1},\sigma(b_\ell))$ belongs to $q$, so 	$a$ realizes each $\psi(x,b_1,\ldots,b_{\ell-1},\sigma(b_\ell))$. Thus, since $\psi(a,b_1,\ldots,b_{\ell-1},y_\ell)$ only has a finite number of realizations,  we deduce that $b_\ell$ has a finite orbit, so it is in the algebraic closure of $M,b_1,\ldots,b_{\ell-1}$ and hence $\dim(b/M)<\ell$, which is a contradiction. \end{proof}


\begin{thebibliography}{99}


\bibitem{BFG22} E. Baro, Jose F. Fernando and J.M. Gamboa. \emph{Spectral maps associated to semialgebraic branched coverings}. Rev. Mat. Complut. 35 (2022), no. 1, 227--264. 

\bibitem{BF09} A. Berarducci and A. Fornasiero. \emph{o-minimal cohomology: finiteness and invariance results}.
J. Math. Log. 9 (2009), no. 2, 167--182. 

\bibitem{BCR98}
J. Bochnak, M. Coste and M.-F. Roy. {\em Real Algebraic Geometry}. Ergeb. Math. Grenzgeb., vol. 36, Springer-Verlag, Berlin, 1998.


%\bibitem{BK18} G. Boxall and C. Kestner. \emph{The definable $(p,q)$-theorem for distal theories}.  J. Symb. Log. 83 (2018), no. 1, 123--127. 

\bibitem{CC83}
M. Carral and M. Coste. {\em Normal spectral spaces and their dimensions}. J. Pure and Appl. Algebra 30 (1983), 227--235.

\bibitem{CR82} M. Coste and M-F. Roy. {\em La topologie du spectre réel}. Ordered fields and real algebraic geometry (San Francisco, Calif., 1981), pp. 27--59, Contemp. Math., 8, Amer. Math. Soc., Providence, R.I., 1982. 


\bibitem{CS13} A. Chernikov and P. Simon. {\em Externally definable sets and dependent pairs}. Israel J. Math. 194 (2013), 409--425. 

%\bibitem{CS15} A. Chernikov and P. Simon. \emph{Externally definable sets and dependent pairs II}. Trans. Amer. Math. Soc. 367 (2015), no. 7, 5217--5235. 

\bibitem{CPS14}A. Chernikov, A. Pillay and P. Simon. {\em
External definability and groups in NIP theories.} 
J. Lond. Math. Soc. (2) 90 (2014), no. 1, 213--240. 

\bibitem{DST19} M. Dickmann, N. Schwartz and M. Tressl. {\em Spectral spaces}. New Mathematical Monographs, 35. Cambridge University Press, Cambridge, 2019.

\bibitem{aD04} A. Dolich. {\em Forking and independence in o-minimal theories}. J. Symb. Logic 69 (2004), 215--240.

\bibitem{EGP06} M. Edmundo, G. Jones and N. Peatfield. {\em
Sheaf cohomology in o-minimal structures}. J. Math. Log. 6 (2006), no. 2, 163--179. 

\bibitem{EP20} M. Edmundo and L. Prelli. \emph{The six Grothendieck operations on o-minimal sheaves}. Math. Z. 294 (2020), no. 1-2, 109--160. 

\bibitem{vdD98}
L. van den Dries. {\em Tame topoloy and o-minimal structures}. London Math. Soc., Lecture Note Series 248, Cambridge University Press, 1998.

\bibitem{FG12}J. F. Fernando and J.M. Gamboa, On the semialgebraic Stone-\v{C}ech compactification of a semialgebraic set. 
Trans. Amer. Math. Soc. 364 (2012), no. 7, 3479--3511. 

\bibitem{FG14} J. F. Fernando and J. M. Gamboa. {\em On \L ojasiewicz's inequality and the Nullstellensatz for rings of semialgebraic functions}. J. Algebra 399 (2014), 475--488.

\bibitem{aF06} A. Fornasiero. {\em O-minimal spectrum}. arXiv: \url{https://arxiv.org/pdf/math/0612366v1.pdf}

\bibitem{GPP14} J. Gismatullin, D. Penazzi and A. Pillay. On compactifications and the topological dynamics of definable groups. Ann. Pure Appl. Logic 165, 552--562 (2014).

\bibitem{HP11} E. Hrushovski and A. Pillay. \emph{On NIP and invariant measures}. J. Eur. Math. Soc. (JEMS) 13 (2011), no. 4, 1005--1061. 

%\bibitem{MS94} D. Marker and C. Steinhorn. {\em Definable types in o-minimal theories}. J. Symb. Logic 59 (1994), 185-198.

%\bibitem{lN09} L. Newelski. {\em Topological Dynamics of Definable Group Actions}. J. Symb. Logic 74 (2009), 50--72.

\bibitem{PP07}
Y. Peterzil and A. Pillay.  {\em Generic sets in definably compact groups}. Fund. Math. 193 (2007), 153--170.


\bibitem{PS17} K. Peterzil and S. Starchenko. {\em Topological groups, $\mu$-types and their stabilizers}. J. European Math. Soc. (2017), 2965--2995.

\bibitem{aP87} A. Pillay. {\em First Order Topological Structures and Theories}. J. Symb. Logic 52 (1987), 763--778.


\bibitem{aP88} A. Pillay. {\em On groups and fields definable in o-minimal structures}. J. Pure Appl. Algebra 53 (1988), 239--255.

\bibitem{aP88b} A. Pillay, {\em Sheaves of Continuous Definable Functions}. J. Symb. Logic 53 (1988), 1165--1169.


%\bibitem{aP07} A. Pillay. {\em On externally definable sets and a theorem of Shelah}. In Felgner Festchrift, Studies in Logic, College Publications, 2007.

%\bibitem{vR91} V. Razenj. {\em On one-dimensional groups over an o-minimal structure}. Ann. Pure Appl. Log. 53 (1991), 269--277.

\bibitem{pS15}
P. Simon. {\em Invariant types in NIP theories}. J. Math. Log. 15 (2015), 26 pp.

\bibitem{SW19} 
P. Simon and E. Walsberg. {\em Tame topology over dp-minimal structures}. Notre Dame J. Form. Log. 60 (2019), no. 1, 61--76. 

\bibitem{sS08}
S. Starchenko. {\em A note on Dolich's paper}. Modnet Preprint server 131 (2008), \url{http://www.logique.jussieu.fr/modnet/Publications/Preprint%20server/papers/131/131.pdf}
	
\bibitem{PS14} P. Simon and S. Starchenko. \emph{On forking and definability of types in some dp-minimal theories}. J. Symb. Log. 79 (2014), no. 4, 1020--1024. 	
	
\bibitem{TZ12} K. Tent and M. Ziegler. {\em A Course in Model Theory}. Lecture Notes in Logic 40, Cambridge University Press, 2012.

\bibitem{mT99} M. Tressl, {\em The real spectrum of continuous definable functions in o-minimal structures}. S\'eminaire de Structures Alg\'ebriques Ordonn\'ees 1997-1998 (1999), 1--15.

%\bibitem{rW10} R. Wencel. {\em Topological properties of sets definable in weakly o-minimal structures}. J. Symb. Logic 75 (2010), 841--867.

%\bibitem{sS09} S. Shelah. {\em Dependent first order theories, continued}. Israel J. Math. 173 (2009), 1--60.

%\bibitem{mZ10} M. Ziegler. {\em Chernikov and Simon's proof of Shelah's theorem}. Preprint (2010) \url{http://home.mathematik.uni-freiburg.de/ziegler/preprints/shelah_nip.pdf}

\end{thebibliography}
\end{document}